\documentclass[12pt,oneside]{amsart}
\usepackage{amsmath,amssymb,latexsym,soul,cite,mathrsfs}
\usepackage{enumitem}
\usepackage{color,enumitem,graphicx}
\usepackage[colorlinks=true,urlcolor=blue,
citecolor=red,linkcolor=blue,linktocpage,pdfpagelabels,
bookmarksnumbered,bookmarksopen]{hyperref}
\usepackage[english]{babel}
\usepackage{lineno}

\usepackage[left=2.4cm,right=2.4cm,top=2.5cm,bottom=2.4cm]{geometry}

\usepackage[hyperpageref]{backref}

%\usepackage[colorinlistoftodos]{todonotes}

%\makeatletter
%\providecommand\@dotsep{5}
%\def\listtodoname{List of Todos}
%\def\listoftodos{\@starttoc{tdo}\listtodoname}
%\makeatother

%\usepackage{refcheck}

\numberwithin{equation}{section}
\newtheorem{theorem}{Theorem}[section]
\newtheorem{lemma}[theorem]{Lemma}
\newtheorem{corollary}[theorem]{Corollary}
\newtheorem{proposition}[theorem]{Proposition}

\newtheorem{claim}{Claim}
\newtheorem{definition}[theorem]{Definition}

\renewcommand{\epsilon}{\varepsilon}

\renewcommand{\rightarrow}{\to}

\title[Hardy-Sobolev type inequality with logarithmic term ]{On a supercritical Hardy-Sobolev type inequality with logarithmic term and related extremal problem}
\author[J.F.\ de Oliveira]{Jos\'{e} Francisco de Oliveira}\thanks{First author was partially supported by  CNPq grant number 309491/2021-5}
\address[J.F.\ de Oliveira]{
\newline\indent Department of Mathematics
	\newline\indent 
	Federal University of Piau\'{i}
	\newline\indent
	64049-550 Teresina, PI, Brazil}
	\email{\href{mailto:jfoliveira@ufpi.edu.br}{jfoliveira@ufpi.edu.br}}
	
\author[J.N.\ Silva ]{Jeferson Silva}
\address[J.N.\ Silva]{\newline\indent Department of Mathematics
	\newline\indent 
	Federal University of the Delta of Parna\'{i}ba
	\newline\indent
	CEP Parna\'{i}ba, PI, Brazil}
	\email{\href{mailto:j.n.silva@ufdpar.edu.br}{j.n.silva@ufdpar.edu.br}}
	\subjclass[2010]{46E35, 35J35, 35J61, 35B33}
\keywords{Sobolev-type inequality; Logarithmic term; Elliptic equations; Extremal problem}
\begin{document}
\maketitle
\begin{abstract}
Our main goal  is to investigate  supercritical Hardy-Sobolev type inequalities with a logarithmic term and their corresponding  variational problem. We prove the existence of extremal functions for the associated variational problem, despite the loss of compactness. As an application, we show the existence of weak solution to a general class of  related elliptic partial differential equations with a logarithmic term.
\end{abstract}
\section{Introduction}
Let $B\subset\mathbb{R}^{N}$, $N \geq 3$ be the unit ball and denote by $H^{1}_{0,\mathrm{rad}}(B)$ the first order Sobolev space of radial functions.  The following  class of elliptic partial differential equations has recently been  investigated by several authors (cf. \cite{MR3514752, DCDS2019, Ngu, Deng-Peng-Zhang-Zhou,Deng-Zhang}  and the references therein)
\begin{equation}\label{m.000}
        \left\{
\begin{array}{lc}
 -\Delta \,u = g(x,u)|u|^{2^{*}-2} u, \quad u \in H^{1}_{0,\mathrm{rad}}(B) & \text{in} \quad B, \,\, \\
 %u >0, & \text{in} \quad B, \,\, \\
 u = 0, &\text{on} \quad \partial B ,   \\
\end{array}
\right.
    \end{equation}
 where $2^{*} = 2N/(N-2)$ is the critical Sobolev exponent and $g:\overline{B}\times\mathbb{R}\to\mathbb{R}$ is a  suitable function. In the purely critical case $g \equiv 1$, it is well known that problem \eqref{m.000} has no positive solution by  Pohozaev's identity. To overcome the nonexistence, in the celebrated paper \cite{MR0709644} Br\'{e}zis and Nirenberg proposed adding a lower-order perturbation term that makes it possible to avoid the loss of compactness arising from the critical growth. This type of problem is currently known  as Br\'{e}zis–Nirenberg problem, and there is a vast literature on this subject \cite{Band,SZ,CaoT,delP,Li}. Recently, J.M. do Ó et al. \cite{MR3514752}  proposed a new type of ``supercritical'' perturbation that plays a similar role of the Br\'{e}zis–Nirenberg lower-order perturbation term in  overcoming the loss of compactness. Namely, by choosing  $g(x,u) = u^{|x|^{\beta}},$ $u>0$  they were able to prove that  \eqref{m.000} admits at least one positive solution under the condition 
\begin{equation}\label{cond m.0}
    0<\beta < \min \{N/2, N-2\}.
\end{equation}
The approach used in \cite{MR3514752} also allowed them to prove the existence of extremal function to the variational problem
\begin{equation}\label{ORU1}
\sup \Big\{ \int_{B} |u(x)|^{2^{*}+|x|^{\beta}} dx \,: \,\, u \in H^{1}_{0, \mathrm{rad}}(B),\,\, \| \nabla u \|_{L^{2}(B)} =1 \Big\} 
\end{equation}
provided that $\beta$ satisfies the condition \eqref{cond m.0}. The supercritical problem \eqref{m.000}-\eqref{ORU1}  has at\-tracted the attention of several authors and there are many extensions to different contexts. In \cite{Cao}, the existence of nodal solutions for \eqref{m.000} was investigated. Extensions for the $k$-Hessian equation can be found in \cite{JDE2019, NARWA2021}, for Hardy-Sobolev type inequality in \cite{DCDS2019}, and for higher order derivative Sobolev spaces in \cite{Ngu}. For the analogous supercritical problem for Trudinger-Moser type growth, we recommend \cite{TMNgo,ORU2,ANNA} and the references therein.

In  \cite{Deng-Peng-Zhang-Zhou}, the authors investigated the problem \eqref{m.000}-\eqref{ORU1} with $g(x,u)=|\ln(\tau + |u|)|^{|x|^{\beta}}$ with $\tau\ge 1$ and were able to achieve results in line with those obtained in \cite{MR3514752}. Namely, they proved the following Sobolev inequality with a logarithmic term
\begin{equation}\label{m.1}
         \mathcal{F}_{\tau, \beta}=\sup \Big\{ \int_{B} |u|^{2^{*}}|\ln{(\tau+ |u(x)|)}|^{|x|^{\beta}} \, dx \,: \,\, u \in H^{1}_{0, \mathrm{rad}}(B),\,\, \| \nabla u \|_{L^{2}(B)} =1 \Big\} < \infty,
\end{equation}
where $\beta,\tau >0$ are real constants. In addition, the supremum in \eqref{m.1} is attained under the conditions \eqref{cond m.0} and $\tau\ge 1$. As an application, the authors are able to prove the existence of a positive solution for  \eqref{m.000} with supercritical logarithmic term, i.e, the equation
\begin{equation}\label{m.2}
        \left\{
\begin{array}{lc}
 -\Delta \,u = (\ln{(\tau +|u|)})^{|x|^{\beta}}|u|^{2^{*} - 2} u,  & \text{in} \quad B, \,\, \\
% u>0, & \text{in} \quad B,\\
 u = 0, &\text{on} \quad \partial B ,   \\
\end{array}
\right.
    \end{equation}
admits a positive solution $u\in H^{1}_{0,\mathrm{rad}}(B)$ provided that \eqref{cond m.0}  holds and $1\leq \tau <\infty$. We also recommend \cite{Deng-Zhang} for an investigation into the existence of nodal solutions to the equation \eqref{m.2}.

Motivated by the works \cite{MR3514752,Deng-Peng-Zhang-Zhou,DCDS2019} and the classical Hardy inequality \cite{Hardy1920}, in this paper, we are interested in investigating Hardy-Sobolev type inequalities with a supercritical logarithmic term and their associated quasilinear elliptic  equations. In order to make our findings precise, we will first briefly introduce the weighted Sobolev spaces, which will be our working setting.  For $\theta\ge0$ and $q\ge 1$, set $L^q_{\theta}=L^q_{\theta}(0,R)$, $0<R\le\infty$ the Lebesgue space associated with the weighted measure $\mu=r^{\theta}dr$ on the interval $(0,R)$. Let us denote by $AC_{loc}(0,R)$ the set of all locally absolutely continuous functions on the interval $(0,R)$. Then, we consider the  Sobolev spaces
$$X^{1,p}_R=X^{1,p}_R\left(\alpha_0,\alpha_1\right)=cl\{u\in AC_{loc}(0,R)\;:\; \lim_{r\rightarrow R}u(r)=0,\;u\in L^{p}_{\alpha_0}\;\mbox{ and} \;u^{\prime}\in L^{p}_{\alpha_1} \}$$  with the norm $\|u\|_{X_R}=(\|u\|^{p}_{L^{p}_{\alpha_0}}+\|u^{\prime}\|^{p}_{L^{p}_{\alpha_1}})^{1/p}$.  In spite of its simple aspect,  mainly due to their connection with the classical Hardy inequality \cite{DCDS2019,Clement-deFigueiredo-Mitidieri,Hardy1920}, fractional dimension function spaces \cite{RMI2023,CV2023,PAMS2014}, and a general class of quasilinear elliptic operators including the $p$-Laplacian and the $k$-Hessian \cite{DLHJGA, DoLuHa,DCDS2019,JN-JF,NARWA2021, JDE2019}, the  weighted spaces $X^{1,p}_R$ has drawn the attention of several authors.
According to the relationship between the parameters $\alpha_1$ and $p$, we can distinguish three important cases for the spaces $X^{1,p}_R\left(\alpha_0,\alpha_1\right)$, namely, the Sobolev case $\alpha_1-p+1>0$, the Trudinger-Moser case $\alpha_1-p+1=0$, and the Morrey case  $\alpha_1-p+1<0$, see for instance \cite{DoLuHa, JN-JF}. In this paper, we are mainly interested in the bounded case $0<R<\infty$ submitted to the  Sobolev condition $\alpha_1-p+1>0$ and the transition condition $\alpha_0\ge\alpha_1-p$. In this case, the norm $\|u\|_{X_{R}}$ is equivalent to the gradient norm $\|u\|:=\|u^{\prime}\|_{L^{p}_{\alpha_1}}$ and  we have the  continuous embedding
\begin{equation}\label{eq10}
	X^{1,p}_R(\alpha_0,\alpha_1)\hookrightarrow L^{q}_{\theta},  \;\; \mbox{if}\;\;  1 < q\leq  p^{*} \;\;\mbox{and}\;\; \min\left\{\theta,\alpha{_0}\right\}\ge \alpha_1-p,
	\end{equation}
where 
$$p^*=\frac{(\theta+1)p}{\alpha_1-p+1}$$ 
represents  the critical (optimal) Sobolev exponent of $X^{1,p}_R(\alpha_0,\alpha_1)$. In addition, in the strict case $q<p^{*}$, the embedding \eqref{eq10} is compact.

From now on, we shall assume  the following conditions on the parameters $\alpha_0,\alpha_1, p$ and $\theta$:
\begin{equation}\label{cond m=1}
 p>1, \quad \alpha_1-p+1>0, \quad \alpha_0\ge \alpha_1-p \quad\text{and} \quad \theta>\alpha_1 -p.  
\end{equation}
The first part of this article is dedicated to the investigation of the supremum
\begin{equation}\label{phim.3}
         \mathcal{S}_{\theta,\tau}(\varphi)=\sup \Big\{ \int_{0}^{1} r^{\theta} |u(r)|^{p^{*}} |\ln{(\tau +|u(r)|)|^{\varphi(r)}} \, dr \,: \,\, u \in X^{1,p}_1(\alpha_0,\alpha_1), \,\, \|  u \|= 1 \Big\},
\end{equation}
where $\tau>0$ and  $\varphi:[0,1)\to\mathbb{R}$ is a continuous function satisfying the hypothesis
\begin{itemize}
    \item [($h_{1}$)]  $\varphi(0) = 0$ and $\varphi(r)>0$ for all $r \in(0, 1)$
    
    \item [($h_{2}$)] there exist  $c>0$ and $\sigma >1$ such that $\varphi(r)|\ln{r}|^{\sigma}\ln{|\ln{r}|} \leq  c$, for $r$ near $0$.
    
    \item [$(h_3)$] $\varphi(r)=o(|\ln(1-r)|)$,      as $r\to 1^{-}$, i.e. $\lim_{r\to 1^-}\varphi(r)(|\ln(1-r)|)^{-1}=0$.
\end{itemize}
Our first result is presented below.
\begin{theorem} \label{C3-T1}  Suppose that  $\varphi\in C[0,1)$ satisfies $(h_1)$-$(h_3)$. Then  $\mathcal{S}_{\theta,\tau}(\varphi)<\infty$ for any $\tau>0$. 
\end{theorem}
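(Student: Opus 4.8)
The plan is to bound the integral in \eqref{phim.3} by a constant depending only on $\tau,\varphi$ and the parameters, uniformly over the admissible $u$, reducing almost everything to the continuous embedding \eqref{eq10}. Since $\min\{\theta,\alpha_0\}\ge\alpha_1-p$ by \eqref{cond m=1} and $\|\cdot\|_{X_1}$ is equivalent to $\|\cdot\|$, the embedding \eqref{eq10} with $q=p^{*}$ yields a constant $C_1$ with $\int_0^1 r^{\theta}|u(r)|^{p^{*}}\,dr\le C_1$ whenever $\|u\|=1$. Hence it is enough to control the extra factor $|\ln(\tau+|u(r)|)|^{\varphi(r)}$: I will show that it is bounded on $(0,1-\delta')$ for suitable small $\delta'$, and that near $r=1$ its product with $r^{\theta}|u|^{p^{*}}$ is integrable, all uniformly in $u$.

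The main tool is a pair of pointwise (radial-lemma) bounds. For $u\in X^{1,p}_1$ with $\lim_{r\to1}u(r)=0$ and $\|u\|=1$, writing $u(r)=-\int_r^1 u'(s)\,ds$ and applying Hölder with the weight $s^{\alpha_1}$ together with $\alpha_1/(p-1)>1$ (which holds because $\alpha_1-p+1>0$), one gets a constant $C_0=C_0(\alpha_1,p)$ such that
\[
|u(r)|\le C_0\,r^{-\gamma}\ \ (0<r<1),\qquad |u(r)|\le C_0\,(1-r)^{(p-1)/p}\ \ (\tfrac12\le r<1),\qquad \gamma:=\tfrac{\alpha_1-p+1}{p}>0.
\]
From $\tau\le\tau+|u(r)|\le\tau+C_0r^{-\gamma}$ the first estimate forces $|\ln(\tau+|u(r)|)|\le C|\ln r|$ for $r$ near $0$, with $C$ independent of $u$ (when $\tau+|u(r)|\ge1$ one has $\ln(\tau+|u(r)|)\le\ln(2C_0)+\gamma|\ln r|$, and when $\tau+|u(r)|<1$ one has $|\ln(\tau+|u(r)|)|\le|\ln\tau|$). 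The second estimate shows that near $r=1$ the number $\tau+|u(r)|$ lies in a fixed compact subinterval of $(0,\infty)$, so there $|\ln(\tau+|u(r)|)|\le M_\tau:=1+|\ln\tau|+|\ln(\tau+C_0)|$, again uniformly in $u$.

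Next I would split $\int_0^1=\int_0^{\delta}+\int_{\delta}^{1-\delta'}+\int_{1-\delta'}^1$ with $\delta,\delta'$ small. On the compact middle interval $[\delta,1-\delta']\subset[0,1)$ the function $\varphi$ is bounded by continuity and $|u|\le C_0\delta^{-\gamma}$, so $|\ln(\tau+|u(r)|)|^{\varphi(r)}$ is bounded by a constant and this piece is $\le CC_1$. On $(0,\delta)$, combining $|\ln(\tau+|u(r)|)|\le C|\ln r|$ with $\varphi(r)\to0$ from $(h_1)$ and with $|\ln r|^{\varphi(r)}=\exp(\varphi(r)\ln|\ln r|)\to1$ — which follows from $(h_2)$ since $\varphi(r)\ln|\ln r|\le c|\ln r|^{-\sigma}\to0$ as $\sigma>1$ — gives $|\ln(\tau+|u(r)|)|^{\varphi(r)}\le C$ on $(0,\delta)$, uniformly in $u$, so this piece is again $\le CC_1$.

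The one genuinely delicate region, and the step I expect to be the main obstacle, is $(1-\delta',1)$: here $(h_3)$ only yields $\varphi(r)=o(|\ln(1-r)|)$, so $\varphi$ may be unbounded, $M_\tau^{\varphi(r)}$ need not be bounded, and one must exploit the polynomial decay of $u$ rather than mere boundedness. Concretely, for any $\eta>0$, $(h_3)$ gives $\varphi(r)\le\eta|\ln(1-r)|$ near $r=1$, hence $M_\tau^{\varphi(r)}\le\exp(\eta\ln M_\tau\,|\ln(1-r)|)=(1-r)^{-\eta\ln M_\tau}$; combined with $|u(r)|^{p^{*}}\le C_0^{p^{*}}(1-r)^{(p-1)p^{*}/p}$ and the boundedness of $r^{\theta}$ on $[1/2,1)$ this gives $r^{\theta}|u(r)|^{p^{*}}|\ln(\tau+|u(r)|)|^{\varphi(r)}\le C(1-r)^{(p-1)p^{*}/p-\eta\ln M_\tau}$. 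Since $(p-1)p^{*}/p>0$, choosing $\eta$ small enough that $\eta\ln M_\tau<1$ keeps the exponent $>-1$, so this piece is integrable, with a bound independent of $u$. (When $0<\tau<1$ one may instead simply note $\tau+|u(r)|<1$ near $r=1$, so $|\ln(\tau+|u(r)|)|\le|\ln\tau|$ outright; it is the case $\tau>1$, where $\ln(\tau+|u|)$ stays bounded away from $0$, that genuinely needs the decay of $u$.) Adding the three contributions bounds the integral in \eqref{phim.3} independently of $u$ with $\|u\|=1$, which is precisely $\mathcal{S}_{\theta,\tau}(\varphi)<\infty$.
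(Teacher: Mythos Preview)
Your proof is correct. Both you and the paper split $(0,1)$ into three regions and use the radial estimate $|u(r)|\le C_0 r^{-\gamma}$ together with $(h_2)$ near $0$ and $(h_3)$ near $1$, but the implementations differ. Near $r=0$ the paper bounds $r^{\theta}|u|^{p^{*}}$ pointwise by $\kappa^{p^{*}}/r$ and then uses a mean-value argument plus $(h_2)$ to show that $(|\ln(\tau+|u|)|^{\varphi(r)}-1)/r$ is dominated by $C/(r|\ln r|^{\sigma})$, which is integrable since $\sigma>1$; your route is cleaner---you observe directly from $(h_2)$ that $(C|\ln r|)^{\varphi(r)}\to 1$, hence the logarithmic factor is uniformly bounded on $(0,\delta)$, and then you simply invoke the critical embedding $\int_0^1 r^{\theta}|u|^{p^{*}}\le C_1$. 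Near $r=1$ you introduce a second radial estimate $|u(r)|\le C_0(1-r)^{(p-1)/p}$ giving genuine polynomial decay of $|u|^{p^{*}}$ at the boundary, which you play against the growth $(1-r)^{-\eta\ln M_\tau}$ coming from $(h_3)$; the paper instead uses only the crude bound $r^{\theta}|u|^{p^{*}}\le \kappa^{p^{*}}/r$ (merely bounded near $1$) and extracts a fixed exponent $(1-r)^{-1/2}$ from $(h_3)$. Your boundary-decay estimate is a nice observation, though as you implicitly note it is not strictly needed: even the bounded estimate would suffice once $\eta\ln M_\tau<1$. Overall your argument is at least as transparent as the paper's and avoids the somewhat delicate mean-value step in \eqref{T1-2.7}.
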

By choosing $\varphi(r)=r^{\beta}$, $\beta>0$ we can see that Theorem~\ref{C3-T1} improves and complements \eqref{m.1} since it includes both  $p\neq 2$ and non-integer values of $\theta$ satisfying \eqref{cond m=1}. In addition, it represents a counterpart of \cite[Theorem~1.1]{DCDS2019} with supercritical logarithmic term. 

To illustrate the scope of Theorem~\ref{C3-T1}, for $\tau\ge 1$, $a>1$ and $0\le b\le 1$ let us define $\Gamma_{a,b}:[0,\infty)\to\mathbb{R}$ given by $\Gamma_{a,b}(t)=t^{a}|\ln(\tau+t)|^{b}$ (if $b=0$ and $\tau=1$ we set $\Gamma_{a,b}(0)=0$). Then, the pair $(\Gamma_{a,b}, (0,1))$ is $\Delta
$-regular (see \cite{Adams}  and Lemma~\ref{lemma-convex} below) and we are able to consider the associated weighted Orlicz space  $L_{\theta,\Gamma_{a,b}}=L_{\theta,\Gamma_{a,b}}(0,1)$ endowed with the Luxemburg norm
\begin{equation}\label{LX-norm}
\|u\|_{\Gamma_{a,b}}=\inf\Big\{\lambda>0\;:\; \int_{0}^{1}r^{\theta}\Gamma_{a,b}\Big(\frac{|u(r)|}{\lambda}\Big)\,dr\le 1\Big\}.
\end{equation}
According with  \eqref{eq10}, the continuous  embedding $$X^{1,p}_1(\alpha_0,\alpha_1)\hookrightarrow L_{\theta,\Gamma_{p^*,0}}$$
is optimal, that is,  the Orlicz space $L_{\theta,\Gamma_{p^*,0}}$  is optimal to the above continuous embedding. In addition, if $0<b\le 1$, for  any $\delta>0$   we have $\Gamma_{p^*,0}(t)/\Gamma_{p^*,b}(\delta t)\to 0$ as $t\to\infty$  which means that  $\Gamma_{p^*,b}$ increases strictly more rapidly than $\Gamma_{p^*,0}$. Hence $X^{1,p}_1(\alpha_0,\alpha_1)$ may not be continuously embedded in any weighted Orlicz space $L_{\theta,\Gamma_{p^*,b}}$ with $0<b\le 1$. In spite of that, if we consider 
$b=b(r)=r^{\beta},\beta>0$ as a variable exponent, Theorem~\ref{C3-T1} yields the following:

\begin{corollary}   \label{C3-C1} Let $\tau\ge 1$ and $\beta>0$ be real numbers. Set 
$ L^{\beta}_{\theta, \ln}$  the generalized weighted Orlicz space  be given by
        $$L^{\beta}_{\theta, \ln} = \Big\{ u : (0, 1) \to \mathbb{R}\,\,\text{measurable}\; :\; \int_{0}^{1} r^{\theta} |u(r)|^{p^{*}} |\ln{(\tau +|u(r)|)}|^{r^{\beta}}\, dr < \infty\Big\},$$
endowed with the norm
\begin{equation}\label{CLXN-norm}
\|u\|_{L_{\theta, \ln}^{\beta}} = \inf \Big\{ \lambda > 0 :\,\, \int_{0}^{1} r^{\theta}\left|\dfrac{u(r)}{\lambda} \right|^{p^{*}} \left| \ln{\left(\tau + \left|\frac{u(r)}{\lambda}\right|\right)}\right|^{r^{\beta}} \, dr \leq 1 \Big\}.
\end{equation}
Then, we have the continuous embedding
    $X^{1,p}_1(\alpha_0,\alpha_1) \hookrightarrow L^{\beta}_{\theta, \ln}.$
\end{corollary}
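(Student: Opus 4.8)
The plan is to deduce Corollary~\ref{C3-C1} directly from Theorem~\ref{C3-T1} by specializing $\varphi(r)=r^{\beta}$, the only genuine subtlety being that the logarithmic factor is not positively homogeneous, so scalars cannot be factored out of $\|\cdot\|_{L^{\beta}_{\theta,\ln}}$ for free. First I would check that $\varphi(r)=r^{\beta}$, $\beta>0$, satisfies $(h_1)$–$(h_3)$: $(h_1)$ is immediate; for $(h_2)$ it suffices that $r^{\beta}|\ln r|^{\sigma}\ln|\ln r|\to 0$ as $r\to 0^{+}$, which holds for every $\sigma>1$ (take e.g. $\sigma=2$ and $c=1$); and $(h_3)$ holds because $r^{\beta}\to 1$ while $|\ln(1-r)|\to\infty$ as $r\to 1^{-}$. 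Hence Theorem~\ref{C3-T1} applies and produces the finite constant
\[
M:=\mathcal{S}_{\theta,\tau}(r^{\beta})=\sup\Big\{\int_{0}^{1}r^{\theta}|u|^{p^{*}}|\ln(\tau+|u|)|^{r^{\beta}}\,dr:\ u\in X^{1,p}_{1}(\alpha_0,\alpha_1),\ \|u\|=1\Big\}<\infty .
\]

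Next, for the continuity estimate, fix $u\in X^{1,p}_{1}(\alpha_0,\alpha_1)$ with $u\neq 0$ and set $v=u/\|u\|$, so $\|v\|=1$ and the displayed bound gives $\int_{0}^{1}r^{\theta}|v|^{p^{*}}|\ln(\tau+|v|)|^{r^{\beta}}\,dr\le M$. Since \eqref{cond m=1} forces $p^{*}>p>1$, I would put $\mu=\max\{1,M^{1/p^{*}}\}$ and $\lambda=\mu\|u\|$. Because $\mu\ge 1$ we have $|u/\lambda|=|v|/\mu\le|v|$, and since $\tau\ge 1$ the map $t\mapsto\ln(\tau+t)$ is nonnegative and nondecreasing on $[0,\infty)$; composing with the nondecreasing map $s\mapsto s^{r^{\beta}}$ gives $|\ln(\tau+|u/\lambda|)|^{r^{\beta}}\le|\ln(\tau+|v|)|^{r^{\beta}}$ pointwise, whence
\[
\int_{0}^{1}r^{\theta}\Big|\tfrac{u(r)}{\lambda}\Big|^{p^{*}}\Big|\ln\big(\tau+\big|\tfrac{u(r)}{\lambda}\big|\big)\Big|^{r^{\beta}}dr\;\le\;\mu^{-p^{*}}\int_{0}^{1}r^{\theta}|v|^{p^{*}}|\ln(\tau+|v|)|^{r^{\beta}}dr\;\le\;\mu^{-p^{*}}M\;\le\;1 .
\]
By the definition \eqref{CLXN-norm} of the Luxemburg norm this yields $\|u\|_{L^{\beta}_{\theta,\ln}}\le\lambda=\max\{1,M^{1/p^{*}}\}\|u\|$; together with the equivalence of $\|\cdot\|$ and $\|\cdot\|_{X_{R}}$ under \eqref{cond m=1} (and the trivial case $u=0$), this is exactly the asserted continuous embedding, with constant $C=\max\{1,\mathcal{S}_{\theta,\tau}(r^{\beta})^{1/p^{*}}\}$.

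Finally I would check that each such $u$ actually belongs to $L^{\beta}_{\theta,\ln}$ as defined, i.e. that its modular at scale $1$ is finite. Writing $|u|=\lambda_{0}|u/\lambda_{0}|$ with $\lambda_{0}=\max\{1,M^{1/p^{*}}\}\|u\|$, and using $\ln(\tau+\lambda_{0}t)\le\ln(1+\lambda_{0})+\ln(\tau+t)$ for $t\ge 0$ (which follows from $\tau+\lambda_{0}t\le(1+\lambda_{0})(\tau+t)$) together with $(a+b)^{r^{\beta}}\le a^{r^{\beta}}+b^{r^{\beta}}$ for $a,b\ge 0$ (valid since $0<r^{\beta}\le 1$), one bounds $\int_{0}^{1}r^{\theta}|u|^{p^{*}}|\ln(\tau+|u|)|^{r^{\beta}}\,dr$ by $\lambda_{0}^{p^{*}}\big(\max\{1,\ln(1+\lambda_{0})\}\,\|u/\lambda_{0}\|_{L^{p^{*}}_{\theta}}^{p^{*}}+1\big)$, which is finite because $X^{1,p}_{1}(\alpha_0,\alpha_1)\hookrightarrow L^{p^{*}}_{\theta}$ by \eqref{eq10}; alternatively, the $\Delta$-regularity of $\Gamma_{p^{*},b}$, uniform in $b\in[0,1]$, identifies this modular set with the Luxemburg-norm space and makes this step automatic. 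The only real obstacle throughout is precisely the lack of homogeneity of $t\mapsto|\ln(\tau+t)|^{r^{\beta}}$: it is handled by monotonicity when the argument is shrunk (continuity estimate) and by the split inequality above when it is enlarged (membership), everything else being a routine consequence of Theorem~\ref{C3-T1}.
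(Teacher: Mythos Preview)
Your proposal is correct and follows essentially the same route as the paper: both arguments take $\lambda_0=\max\{1,\mathcal{F}_{\tau,\beta,\theta}^{1/p^*}\}$, use $\tau\ge 1$ and $\lambda_0\ge 1$ to get the pointwise monotonicity $|\ln(\tau+|u|/(\lambda_0\|u\|))|^{r^\beta}\le |\ln(\tau+|u|/\|u\|)|^{r^\beta}$, and then bound the modular by $\lambda_0^{-p^*}\mathcal{F}_{\tau,\beta,\theta}\le 1$ via Theorem~\ref{C3-T1}. The only notable difference is emphasis: the paper spends the first half of its proof verifying that $L^{\beta}_{\theta,\ln}$ is a genuine Banach space (via the convexity Lemma~\ref{lemma-convex} and the generalized $\Phi$-function framework), whereas you instead check membership directly and verify $(h_1)$--$(h_3)$ for $\varphi(r)=r^\beta$ explicitly.
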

Note that  $b(r)=r^{\beta}$ with $r\in [0,1)$ satisfies $0<b(r)\le 1$, except for  $r=0$. For this reason, we say that either the inequality with logarithmic term in Theorem~\ref{C3-T1} or the embedding in Corollary~\ref{C3-C1} are of the supercritical type.

In order to state our next results, we will consider the best Sobolev type constant 
\begin{equation}\label{C3-0}
    \Sigma_{p} =   \sup \Big\{ \int_{0}^{1} r^{\theta} |u|^{p^{*}}  \, dr \; : \; u \in X^{1,p}_1(\alpha_0,\alpha_1),\; \|  u \|= 1 \Big\}
\end{equation}
see for instance \cite{DCDS2019,Clement-deFigueiredo-Mitidieri} for more details. 

Our  next result  establishes the relation between  \eqref{phim.3} with $\varphi(r)=r^{\beta}$, $\beta>0$ and \eqref{C3-0} that  will allows us to analyze the attainability of \eqref{phim.3}. Firstly, let us denote   
\begin{equation}\label{m.3}
         \mathcal{F}_{\tau, \beta, \theta}= \mathcal{S}_{\theta,\tau}(r^{\beta})=\sup \Big\{ \int_{0}^{1} r^{\theta} |u|^{p^{*}} |\ln{(\tau +|u|)|^{r^{\beta}}} \, dr \,: \, u \in X^{1,p}_1(\alpha_0,\alpha_1), \; \|  u \|= 1 \Big\},
\end{equation}
where $\beta>0$.  With this notation,  we have the following
\begin{theorem}\label{T2}  For any $\tau> 0$, we have 
\begin{enumerate}
    \item [ $(i)$]  
$\mathcal{F}_{\tau, \beta, \theta} \geq \Sigma_{p}$ for any $\beta>0$
    \item[$(ii)$]  $ \mathcal{F}_{\tau, \beta, \theta} > \Sigma_{p}$ if $0<\beta< \min\{(\theta+1)/p, (\alpha_{1}-p+1)/(p-1)\}$
    \item[$(iii)$] $    \displaystyle\lim_{\beta \to \infty} \mathcal{F}_{\tau, \beta, \theta} = \Sigma_{p}$.
\end{enumerate}
\end{theorem}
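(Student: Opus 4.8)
The plan is to treat the three items separately: $(i)$ and $(iii)$ follow from direct comparison arguments that exploit where the weight $r^{\beta}$ lives — it vanishes at $r=0$ and equals $1$ at $r=1$ — whereas $(ii)$ requires a Brézis–Nirenberg–type concentration argument. Two elementary facts about $X^{1,p}_1(\alpha_0,\alpha_1)$ are used repeatedly. First, the rescaling $\tilde u_\mu(r)=\mu^{-(\alpha_1-p+1)/p}u(r/\mu)\mathbb 1_{(0,\mu)}(r)$ satisfies $\|\tilde u_\mu\|=\|u\|$ and, because $(\alpha_1-p+1)p^{*}/p=\theta+1$ (the scaling invariance of the critical exponent), also $\int_0^1 r^{\theta}|\tilde u_\mu|^{p^{*}}dr=\int_0^1 r^{\theta}|u|^{p^{*}}dr$. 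Second, from $u(r)=-\int_r^1 u'$ and Hölder's inequality (using $\alpha_1>p-1$), one has the pointwise bounds $|u(r)|\le C r^{-(\alpha_1-p+1)/p}\|u\|$ for $r$ near $0$ and $|u(r)|\le C(1-r)^{(p-1)/p}\|u\|$ for $r$ near $1$.

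For $(i)$, fix $u$ with $\|u\|=1$. The change of variables above yields
\begin{equation*}
\int_0^1 r^{\theta}|\tilde u_\mu|^{p^{*}}\,|\ln(\tau+|\tilde u_\mu|)|^{r^{\beta}}\,dr=\int_0^1 s^{\theta}|u(s)|^{p^{*}}\,\big|\ln\big(\tau+\mu^{-(\alpha_1-p+1)/p}|u(s)|\big)\big|^{(\mu s)^{\beta}}\,ds .
\end{equation*}
For each $s$ with $u(s)\neq0$ one has $(\mu s)^{\beta}\ln\big|\ln(\tau+\mu^{-(\alpha_1-p+1)/p}|u(s)|)\big|\to0$ as $\mu\to0^{+}$, so the integrand converges a.e.\ to $s^{\theta}|u(s)|^{p^{*}}$; since $\|\tilde u_\mu\|=1$, each left-hand side is $\le\mathcal F_{\tau,\beta,\theta}$, and Fatou's lemma gives $\mathcal F_{\tau,\beta,\theta}\ge\int_0^1 s^{\theta}|u|^{p^{*}}ds$. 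Taking the supremum over such $u$ proves $(i)$.

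For $(iii)$, by $(i)$ it suffices to show $\limsup_{\beta\to\infty}\mathcal F_{\tau,\beta,\theta}\le\Sigma_p$. Fix $\delta\in(0,1)$ and split the integral over $(0,1-\delta)$ and $(1-\delta,1)$. On $(1-\delta,1)$ the bound $|u(r)|\le C(1-r)^{(p-1)/p}$ makes the logarithmic factor bounded by a constant independent of $\beta$ and of $u$, and $\int_{1-\delta}^1 r^{\theta}|u|^{p^{*}}dr=O(\delta^{1+(p-1)p^{*}/p})$, so the contribution of this interval is at most some $\omega(\delta)\to0$. On $(0,1-\delta)$ write $|\ln(\tau+|u|)|^{r^{\beta}}=1+(|\ln(\tau+|u|)|^{r^{\beta}}-1)$; using $\int_0^{1-\delta}r^{\theta}|u|^{p^{*}}dr\le\Sigma_p$, the inequality $x^{a}-1\le a\,x^{a}\ln x$ for $x\ge1,\ 0\le a\le1$, and the bound $|u(r)|\le Cr^{-(\alpha_1-p+1)/p}$ near the origin, the remainder $\int_0^{1-\delta}r^{\theta}|u|^{p^{*}}(|\ln(\tau+|u|)|^{r^{\beta}}-1)^{+}dr$ is controlled on a fixed neighbourhood $(0,\rho_0)$ of $0$ by $C\int_0^{\rho_0}r^{\beta-1}|\ln r|\ln|\ln r|\,dr$ (finite for $\beta>0$, and $\to0$ as $\beta\to\infty$), and on $(\rho_0,1-\delta)$ by $(M^{(1-\delta)^{\beta}}-1)^{+}\,\Sigma_p\to0$. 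Hence $\mathcal F_{\tau,\beta,\theta}\le\Sigma_p+o_{\beta}(1)+\omega(\delta)$; letting $\beta\to\infty$ and then $\delta\to0$ gives the claim.

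For $(ii)$ I would follow the Brézis–Nirenberg strategy as in \cite{MR3514752,DCDS2019}. Take the truncated, rescaled Bliss-type extremals $U_\epsilon(r)=\epsilon^{-(\alpha_1-p+1)/p}U(r/\epsilon)\chi(r)$ for $\Sigma_p$ concentrating at $r=0$ (with $U$ the unscaled profile and $\chi$ a cut-off), normalized by $\|U_\epsilon\|=1$, so that $\int_0^1 r^{\theta}|U_\epsilon|^{p^{*}}dr=\Sigma_p-O(\epsilon^{a})$ for an explicit $a>0$ coming from the truncation. Then
\begin{equation*}
\int_0^1 r^{\theta}|U_\epsilon|^{p^{*}}|\ln(\tau+|U_\epsilon|)|^{r^{\beta}}dr=\int_0^1 r^{\theta}|U_\epsilon|^{p^{*}}dr+\int_0^1 r^{\theta}|U_\epsilon|^{p^{*}}\big(|\ln(\tau+|U_\epsilon|)|^{r^{\beta}}-1\big)dr,
\end{equation*}
and on the concentration region $|U_\epsilon|\sim\epsilon^{-(\alpha_1-p+1)/p}$, so $|\ln(\tau+|U_\epsilon|)|^{r^{\beta}}-1\approx r^{\beta}\ln|\ln(\tau+|U_\epsilon|)|$ with $\ln|\ln(\tau+|U_\epsilon|)|\sim\ln|\ln\epsilon|$; a rescaling $r=\epsilon s$ shows the gain term is $\ge c\,\epsilon^{\beta}\ln|\ln\epsilon|$ with $c>0$, provided $\beta$ is small enough for the rescaled integral $\int_0^{\infty}s^{\theta+\beta}U(s)^{p^{*}}ds$ to converge. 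Comparing exponents, this gain beats the truncation loss $O(\epsilon^{a})$ precisely under $0<\beta<\min\{(\theta+1)/p,(\alpha_1-p+1)/(p-1)\}$, whence $\mathcal F_{\tau,\beta,\theta}\ge\Sigma_p+c\,\epsilon^{\beta}\ln|\ln\epsilon|-O(\epsilon^{a})>\Sigma_p$ for $\epsilon$ small. The main obstacle is the asymptotic bookkeeping: pinning down the exact power $a$ produced by the cut-off, estimating $\ln|\ln(\tau+|U_\epsilon|)|$ uniformly over the whole concentration region (not just its core), and verifying that the two resulting constraints on $\beta$ are exactly those stated; the extra logarithmic factor is handled with the same estimates used to prove Theorem~\ref{C3-T1}.
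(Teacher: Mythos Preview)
Your arguments for $(i)$ and $(iii)$ are correct and genuinely different from the paper's. For $(i)$, the paper does not use your rescaling/Fatou trick; instead it plugs the truncated Bliss functions $u_\epsilon$ directly into the functional, proves a quantitative lower bound (Lemma~\ref{lema20}) of the form $\mathcal E_{t_\epsilon}(0,1)\ge C\epsilon^\beta\ln|\ln\epsilon|+O(\epsilon^{(\theta+1)/p})$, and then obtains both $(i)$ and $(ii)$ from the single expansion $\mathcal F_{\tau,\beta,\theta}\ge \Sigma_p+C\epsilon^\beta\ln|\ln\epsilon|+O(\epsilon^{sp})+O(\epsilon^{(\theta+1)/p})$. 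Your route to $(i)$ is more elementary and avoids any asymptotic computation; the paper's route, on the other hand, yields $(i)$ and $(ii)$ simultaneously from the same estimate. For $(iii)$, the paper argues by contradiction: it takes almost-maximizers $u_j$ for a sequence $\beta_j\to\infty$, uses dominated convergence on $(\rho,1)$ (via the compact embedding $X^{1,p}_1([\rho,1))\hookrightarrow L^q_\theta$) and the bound $|\ln(\tau+|u_j|)|^{r^{\beta_j}}\le (C_0|\ln r|)^r$ on $(0,\rho)$ to conclude $\mathcal F_{\tau,\beta_j,\theta}\le (1+\epsilon)\Sigma_p+o_j(1)$. Your direct splitting over $(0,\rho_0)\cup(\rho_0,1-\delta)\cup(1-\delta,1)$ is cleaner in spirit and does not require passing to sequences.

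For $(ii)$ your sketch matches the paper's strategy (truncated Bliss functions and comparison of the gain $C\epsilon^\beta\ln|\ln\epsilon|$ against the truncation errors), but one point in your explanation is off: the range $0<\beta<\min\{(\theta+1)/p,(\alpha_1-p+1)/(p-1)\}$ does \emph{not} come from the convergence of $\int_0^\infty s^{\theta+\beta}U(s)^{p^*}ds$. It comes from beating the two error exponents, namely $sp=(\alpha_1-p+1)/(p-1)$ (from the gradient normalization \eqref{3.6}) and $(\theta+1)/p$ (from the lower-bound Lemma~\ref{lema20} when $0<\tau<e$). The paper packages this into Lemmas~\ref{lema20}--\ref{lema20-B}, which require a careful case analysis in $\tau$ (splitting $0<\tau<1/e$, $1/e\le\tau<e$, $\tau\ge e$) that your sketch does not yet address; this is exactly the ``asymptotic bookkeeping'' you anticipate.
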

Our attainability result reads below.
\begin{theorem} \label{C3-T3}
If $\tau \geq 1$ and $0<\beta< \min\{(\theta+1)/p, (\alpha_{1}-p+1)/(p-1)\}$, then the supremum $\mathcal{F}_{\tau, \beta, \theta}$ is attained.
\end{theorem}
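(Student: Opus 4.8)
The plan is to follow the standard concentration–compactness strategy adapted to the weighted radial setting, using Theorem~\ref{T2}(ii) as the crucial strict-inequality input that rules out concentration. Let $(u_k)\subset X^{1,p}_1(\alpha_0,\alpha_1)$ be a maximizing sequence for $\mathcal{F}_{\tau,\beta,\theta}$ with $\|u_k\|=1$. Since the unit ball of $X^{1,p}_1$ is weakly compact, after passing to a subsequence we have $u_k\rightharpoonup u$ weakly in $X^{1,p}_1$, $u_k\to u$ strongly in $L^q_\theta$ for every $1<q<p^*$ by the compact embedding in \eqref{eq10}, and $u_k\to u$ a.e.\ on $(0,1)$. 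The whole point is to show $u\neq 0$ and that $u$ is actually an extremal, i.e.\ $\|u\|=1$ and the functional value at $u$ equals $\mathcal{F}_{\tau,\beta,\theta}$.

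First I would establish the key non-vanishing dichotomy. Decompose the functional as $J_\beta(u)=\int_0^1 r^\theta|u|^{p^*}|\ln(\tau+|u|)|^{r^\beta}\,dr$. Using the a.e.\ convergence together with a Brezis–Lieb type splitting for the integrand $r^\theta|u|^{p^*}|\ln(\tau+|u|)|^{r^\beta}$ (the logarithmic factor is bounded on any region where $|u_k|$ is bounded, and one controls the high-value region by the finiteness already guaranteed by Theorem~\ref{C3-T1}), I would show
\[
J_\beta(u_k)=J_\beta(u_k-u)+J_\beta(u)+o(1).
\]
Simultaneously $\|u_k\|^p=\|u_k-u\|^p+\|u\|^p+o(1)$ by weak convergence in $L^p_{\alpha_1}$ applied to the derivatives. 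Set $\lambda=\|u\|^p\in[0,1]$, so $\|u_k-u\|^p\to 1-\lambda$. On the "escaping" part $v_k:=u_k-u$, which converges to $0$ in every $L^q_\theta$ with $q<p^*$, only the purely critical Sobolev term survives asymptotically in $J_\beta(v_k)$: because the sub-level contribution $|v_k|\le M$ is killed by the strong $L^{p^*-\eps}$ decay together with $|\ln(\tau+t)|^{r^\beta}\le C_M$ there, while on $|v_k|>M$ one has $|\ln(\tau+|v_k|)|^{r^\beta}$ close to $1$ in the relevant $L^1$ sense (here is where $\tau\ge 1$ enters, forcing $|\ln(\tau+t)|\ge 0$ and making the exponent-$r^\beta$ factor $\le$ a fixed power that Theorem~\ref{C3-T1} tames, and where one uses $r^\beta\to 0$ near $r=0$ and the precise hypotheses $(h_1)$–$(h_3)$ verified by $\varphi=r^\beta$). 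Hence $\limsup_k J_\beta(v_k)\le \Sigma_p(1-\lambda)^{p^*/p}\le \Sigma_p(1-\lambda)$, using $p^*>p$.

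Combining, $\mathcal{F}_{\tau,\beta,\theta}=\lim_k J_\beta(u_k)\le \Sigma_p(1-\lambda)+J_\beta(u)$. Now I argue by contradiction. If $u=0$ then $\lambda=0$ and we get $\mathcal{F}_{\tau,\beta,\theta}\le\Sigma_p$, contradicting Theorem~\ref{T2}(ii) which gives strict inequality $\mathcal{F}_{\tau,\beta,\theta}>\Sigma_p$ under exactly the hypothesis $0<\beta<\min\{(\theta+1)/p,(\alpha_1-p+1)/(p-1)\}$. So $u\neq 0$, i.e.\ $\lambda>0$. Next, by the definition of the supremum applied to $u/\|u\|$ one has $J_\beta(u)\le \mathcal{F}_{\tau,\beta,\theta}\,\|u\|^{p^*}\le \mathcal{F}_{\tau,\beta,\theta}\,\lambda$ if $\lambda\le 1$, but more sharply one should use homogeneity carefully: replacing $u$ by $u/\lambda^{1/p}$ (unit norm) shows $J_\beta(u)\le \mathcal{F}_{\tau,\beta,\theta}\cdot(\text{value}\le\lambda)$ only after noting that scaling down increases the logarithmic factor, so in fact $\lambda^{-p^*/p}J_\beta(u)\le \mathcal{F}_{\tau,\beta,\theta}$ fails to be immediate — instead one shows $J_\beta(u)\le \mathcal{F}_{\tau,\beta,\theta}\,\lambda$ is what is needed and follows since $\lambda\le 1$ makes $|\ln(\tau+|u|/\lambda^{1/p})|^{r^\beta}\ge|\ln(\tau+|u|)|^{r^\beta}$, giving $\mathcal{F}_{\tau,\beta,\theta}\ge J_\beta(u/\lambda^{1/p})\ge \lambda^{-p^*/p}J_\beta(u)\ge \lambda^{-1}J_\beta(u)$. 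Plugging back: $\mathcal{F}_{\tau,\beta,\theta}\le\Sigma_p(1-\lambda)+\mathcal{F}_{\tau,\beta,\theta}\,\lambda$, hence $(1-\lambda)\mathcal{F}_{\tau,\beta,\theta}\le(1-\lambda)\Sigma_p$. If $\lambda<1$ this yields $\mathcal{F}_{\tau,\beta,\theta}\le\Sigma_p$, again contradicting Theorem~\ref{T2}(ii). Therefore $\lambda=1$, i.e.\ $\|u\|=1$, so $v_k\to0$ strongly in $X^{1,p}_1$; then $J_\beta(u_k)\to J_\beta(u)$ and $u$ realizes the supremum.

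The main obstacle will be the Brezis–Lieb splitting for the variable-exponent logarithmic functional: unlike the pure power $|u|^{p^*}$, the map $t\mapsto t^{p^*}|\ln(\tau+t)|^{r^\beta}$ is not homogeneous, so one must carefully quantify the error terms, splitting the domain into $\{|u_k|\le M\}$, $\{|u|\le M<|u_k|\}$, and $\{|u|>M\}$, and use uniform integrability coming from Theorem~\ref{C3-T1} (which controls $\int r^\theta|u_k|^{p^*}|\ln(\tau+|u_k|)|^{r^\beta}\le\mathcal{F}_{\tau,\beta,\theta}$ uniformly) to send $M\to\infty$. A secondary technical point is justifying that the escaping mass contributes only the \emph{critical} constant $\Sigma_p$ and not more — this is where the condition $\tau\ge 1$ is genuinely used, ensuring $|\ln(\tau+t)|\ge\ln\tau\ge 0$ so that near $r=0$, where $r^\beta\to0$, the factor $|\ln(\tau+t)|^{r^\beta}\to1$, while the hypotheses $(h_1)$–$(h_3)$ (already verified for $\varphi=r^\beta$ in the derivation of $\mathcal{F}_{\tau,\beta,\theta}=\mathcal{S}_{\theta,\tau}(r^\beta)$) prevent the logarithmic weight from blowing up the escaping-mass term beyond the Sobolev threshold.
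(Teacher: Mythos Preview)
Your overall strategy is correct and actually takes a cleaner route than the paper. Both arguments hinge on Theorem~\ref{T2}(ii) and a Brezis--Lieb splitting $J_\beta(u_k)=J_\beta(v_k)+J_\beta(u)+o(1)$, $\|u_k\|^p=\|v_k\|^p+\|u\|^p+o(1)$ with $v_k=u_k-u$. The paper proceeds in two stages: first it bounds $J_\beta(v_k)\le \mathcal{F}_{\tau,\beta,\theta}\|v_k\|^{p^*}$ (the weak scaling inequality, using $\tau\ge 1$) and uses the strict concavity $(1-t)^{p^*/p}+t^{p^*/p}<1$ to force $u\equiv 0$; it then invokes Ekeland's variational principle to upgrade the maximizing sequence to an almost-critical one, proves it is a normalized concentrating sequence (NCS), and finally applies Lemma~\ref{lemma NCS} to get $\limsup J_\beta(u_k)\le\Sigma_p$, contradicting~(ii). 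You bypass Ekeland entirely by proving the \emph{sharper} bound $\limsup_k J_\beta(v_k)\le \Sigma_p(1-\lambda)^{p^*/p}$ directly for the tail $v_k\rightharpoonup 0$; combined with $J_\beta(u)\le\mathcal{F}_{\tau,\beta,\theta}\lambda^{p^*/p}$ this rules out both $\lambda=0$ and $0<\lambda<1$ in one stroke. What the paper's route buys is the standalone structural result (Lemma~\ref{lemma NCS} and the Proposition) that non-attained maximizing sequences concentrate at the origin; your route buys brevity and avoids the variational-principle machinery.

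Two remarks on execution. First, your justification of $\limsup J_\beta(v_k)\le\Sigma_p(1-\lambda)^{p^*/p}$ via the level-set split $\{|v_k|\le M\}$ versus $\{|v_k|>M\}$ only works once you observe that the radial pointwise estimate (Lemma~\ref{C3-L1}) forces $\{|v_k|>M\}\subset(0,\rho_M)$ with $\rho_M\to 0$; it is really a split in $r$, and the rigorous version is exactly the computation in the paper's proof of Lemma~\ref{lemma NCS}, part~(a), namely estimate~\eqref{rho-closezero}, which uses only $\|v_k\|\le C$ (not the NCS property), together with dominated convergence on $[\rho,1]$ since $v_k\to 0$ a.e.\ and is uniformly bounded there. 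Second, your attribution of the role of $\tau\ge 1$ is slightly off: the bound on the escaping mass works for all $\tau>0$ (cf.\ Lemma~\ref{lemma NCS}); where $\tau\ge 1$ is genuinely needed is precisely your scaling step $J_\beta(u/\lambda^{1/p})\ge \lambda^{-p^*/p}J_\beta(u)$, which requires $t\mapsto\ln(\tau+t)$ to be nonnegative and increasing. The paper uses $\tau\ge 1$ at the analogous point in its Step~1.
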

Theorems \ref{T2} and \ref{C3-T3}  extend  results from \cite[Theorem~1.4]{Deng-Peng-Zhang-Zhou} to the context of the weighted Sobolev spaces $X^{1,p}_1(\alpha_0,\alpha_1)$. Related results can be found \cite{DCDS2019} for  $X^{1,p}_1(\alpha_0,\alpha_1)$ and \cite{NARWA2021} for the  $k$-admissible function spaces $\Phi^{k}_{0,rad}(B)$ which is the natural setting for studying the $k$-Hessian equation, see for instance \cite{MR3487276,Wangbook} for more details.

As a byproduct of the developments  in Theorem ~\ref{C3-T1},  Theorem~\ref{T2}, and Theorem~\ref{C3-T3}, we will investigate  the existence of solutions for the differential equation associated with the following  class of quasilinear elliptic operators in radial form
\begin{equation}\label{L-operator}
Lu\stackrel {\rm
def}{=}-r^{-\theta}(r^{\alpha_1}|u^{\prime}|^{p-2}u^{\prime})^{\prime}
\end{equation}
where  $\alpha_1, \theta\ge 0$ and $p>1$ are real numbers and $u\in C^{2}(0,R)$, with $0<R\le \infty$. The operator in \eqref{L-operator} can be applied to the study of microelectromechanical systems  MEMS (cf. \cite{Esposito,Esteban1}) and  has a close relationship with classical operators such as Laplace, $p$-Laplace  and $k$-Hessian, when acting on radially symmetric functions.  For an in-depth discussion on the class of operators \eqref{L-operator}, see for instance the papers \cite{Clement-deFigueiredo-Mitidieri, Jacobsen,deFigueiredo-Goncalves-Miyagaki, Esteban1,CX} and references quoted therein. Here, we shall prove the existence of solutions for the supercritical elliptic equation with logarithmic term associated with \eqref{L-operator}. 
\begin{theorem}    \label{C3-T4}
   Let $\alpha_0, \alpha_1, \theta$, and $p$ be positive real numbers satisfying \eqref{cond m=1}. For each $\tau \in [1, \infty)$ and $0< \beta< \min \{(\theta+1)/p, (\alpha_{1}-p+1)/(p-1)\}$, the problem 
   \begin{equation}\label{m.5}
        \left\{
\begin{array}{lc}
 Lu = (\ln(\tau + |u|))^{r^{\beta}}|u|^{p^{*}-2}u & \text{in} \quad (0, 1) \\
% u>0, & \text{in}\,\, (0, 1)\\
 u(1) = 0, &   \\
\end{array}
\right.
    \end{equation}
admits a  nontrivial  weak solution $u \in X^{1,p}_1(\alpha_0,\alpha_1)$.   
\end{theorem}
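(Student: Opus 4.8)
The plan is to obtain the solution of \eqref{m.5} as a nontrivial critical point of the natural energy functional on $X:=X^{1,p}_{1}(\alpha_{0},\alpha_{1})$. Writing $h(r,t)=(\ln(\tau+|t|))^{r^{\beta}}|t|^{p^{*}-2}t$ and letting $H(r,t)=\int_{0}^{t}h(r,s)\,ds$ be its primitive, set
\[
I(u)=\frac{1}{p}\|u\|^{p}-\int_{0}^{1}r^{\theta}H(r,u)\,dr .
\]
Since $\tau\ge 1$ and $r^{\beta}\le 1$, the map $s\mapsto(\ln(\tau+s))^{r^{\beta}}$ is nonnegative and nondecreasing, so $0\le H(r,t)\le\frac{1}{p^{*}}|t|^{p^{*}}(\ln(\tau+|t|))^{r^{\beta}}$. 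Combining this with Corollary~\ref{C3-C1} (equivalently with the finiteness of $\mathcal{F}_{\tau,\beta,\theta}$ in Theorem~\ref{C3-T1}) and the embeddings \eqref{eq10}, one checks that $I$ is well defined and, by a routine argument, that $I\in C^{1}(X,\mathbb{R})$ with
\[
\langle I'(u),v\rangle=\int_{0}^{1}r^{\alpha_{1}}|u'|^{p-2}u'v'\,dr-\int_{0}^{1}r^{\theta}(\ln(\tau+|u|))^{r^{\beta}}|u|^{p^{*}-2}u\,v\,dr ,
\]
so that the critical points of $I$ are exactly the weak solutions of \eqref{m.5}, the boundary condition $u(1)=0$ being encoded in $X$.

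First I would verify the mountain-pass geometry. Using $0\le H(r,t)\le\frac{1}{p^{*}}|t|^{p^{*}}(\ln(\tau+|t|))^{r^{\beta}}$ together with the bound $\int_{0}^{1}r^{\theta}|u|^{p^{*}}(\ln(\tau+|u|))^{r^{\beta}}\,dr\le C\|u\|^{p^{*}}$ valid for $\|u\|$ small (a consequence of Corollary~\ref{C3-C1} and $p^{*}>p$), one gets $I(u)\ge\frac{1}{p}\|u\|^{p}-C\|u\|^{p^{*}}>0$ on a small sphere $\{\|u\|=\rho\}$; on the other hand, fixing $v\in X$ with $v>0$ on some $[a,b]\subset(0,1)$, the estimate $H(r,tv)\ge c_{0}\,t^{p^{*}}$ on $[a,b]$ for $t$ large gives $I(tv)\to-\infty$. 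This yields a mountain-pass level $c>0$. Moreover $t\,h(r,t)=|t|^{p^{*}}(\ln(\tau+|t|))^{r^{\beta}}\ge p^{*}H(r,t)\ge 0$, so the Ambrosetti--Rabinowitz condition holds with exponent $p^{*}>p$; hence any Palais--Smale sequence $(u_{n})$ at level $c$ satisfies $c+o(1)+o(\|u_{n}\|)\ge I(u_{n})-\frac{1}{p^{*}}\langle I'(u_{n}),u_{n}\rangle\ge\bigl(\tfrac{1}{p}-\tfrac{1}{p^{*}}\bigr)\|u_{n}\|^{p}$ and is therefore bounded in $X$. Passing to a subsequence, $u_{n}\rightharpoonup u$ in $X$, $u_{n}\to u$ in $L^{q}_{\theta}$ for every $q<p^{*}$ and a.e.\ on $(0,1)$; it then remains to pass to the limit in the nonlinear term, i.e.\ to prove $\int_{0}^{1}r^{\theta}h(r,u_{n})v\,dr\to\int_{0}^{1}r^{\theta}h(r,u)v\,dr$ for all $v\in X$, and that $u\neq 0$.

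The main obstacle is exactly this passage to the limit: the nonlinearity is of supercritical type --- away from $r=0$ the effective exponent $p^{*}+r^{\beta}$ exceeds the critical Sobolev exponent of $X$ --- so $X\hookrightarrow L^{\beta}_{\theta,\ln}$ is not compact and the densities $r^{\theta}|u_{n}|^{p^{*}}(\ln(\tau+|u_{n}|))^{r^{\beta}}\,dr$ need not be equi-integrable. I would handle this by a concentration analysis adapted to the weighted space $X$. Since $W^{1,p}(a,b)$ embeds compactly into $C[a,b]$ for every $[a,b]\subset(0,1]$, no loss of compactness can occur away from $r=0$, and there the nonlinear term converges by dominated convergence; the only possibility is concentration of the critical part of the energy at the center $r=0$. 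But there $\varphi(r)=r^{\beta}\to 0$, and, using hypothesis $(h_{2})$, the rescaled profiles of a concentrating sequence see a logarithmic factor tending to $1$, so the limiting concentration profile is an extremal of the purely critical constant $\Sigma_{p}$ of \eqref{C3-0} and carries a fixed amount of energy. In the spirit of Br\'{e}zis--Nirenberg, I would then show that the mountain-pass level $c$ lies strictly below this energy threshold, the key inputs being the strict inequality $\mathcal{F}_{\tau,\beta,\theta}>\Sigma_{p}$ of Theorem~\ref{T2}(ii) and the finiteness of $\mathcal{F}_{\tau,\beta,\theta}$ (Theorem~\ref{C3-T1}), under the restriction $0<\beta<\min\{(\theta+1)/p,(\alpha_{1}-p+1)/(p-1)\}$ --- the same range that underlies Theorem~\ref{C3-T3}. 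It follows that no concentration occurs, so $(u_{n})$ is relatively compact in $X$, $u_{n}\to u$ strongly, $I(u)=c>0$ and $I'(u)=0$; hence $u\in X^{1,p}_{1}(\alpha_{0},\alpha_{1})$ is a nontrivial weak solution of \eqref{m.5} (and, replacing $u$ by $|u|$, which has the same norm and energy, one may take $u\ge 0$). The delicate, quantitative part of this scheme --- estimating $c$ against the concentration threshold by means of truncated extremals of \eqref{C3-0} perturbed by the logarithmic term --- is where the hypotheses on $\beta$ and the estimates behind Theorems~\ref{C3-T1}, \ref{T2} and \ref{C3-T3} enter in an essential way.
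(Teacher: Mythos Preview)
Your plan is correct and matches the paper's own argument: define the energy $I$, verify the mountain-pass geometry, bound Palais--Smale sequences via the Ambrosetti--Rabinowitz structure, and rule out loss of compactness by showing $c_{MP}<\bigl(\tfrac{1}{p}-\tfrac{1}{p^{*}}\bigr)\mathcal{S}^{\frac{\theta+1}{\theta-\alpha_{1}+p}}$ with the truncated Bliss functions $u_{\epsilon}$. Two small remarks: the strict inequality $\mathcal{F}_{\tau,\beta,\theta}>\Sigma_{p}$ of Theorem~\ref{T2}(ii) is not itself the input for the level estimate---what is actually used is the pointwise lower bound on $\mathcal{E}_{t_{\epsilon}}(0,1)$ from Lemma~\ref{lema20} (the same lemma that \emph{proves} Theorem~\ref{T2}(ii)); and the paper does not conclude strong convergence of the Palais--Smale sequence but only that the weak limit is nontrivial (arguing by contradiction that if $u_{0}\equiv 0$ then $(u_{j})$ becomes a Palais--Smale sequence for the purely critical functional $I_{0}$ at a level below the Sobolev threshold, which is impossible), though your concentration-compactness phrasing leads to the same conclusion.
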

We observe that if we drop the logarithmic term in the equation \eqref{m.5}, the problem has no solution, as noted by \cite[Theorem~4.1]{Clement-deFigueiredo-Mitidieri}. Thus, the logarithmic term plays a similar role to the lower order term proposed by Brezis and Nirenberg \cite{MR0709644} (see also \cite{Clement-deFigueiredo-Mitidieri}), allowing us to avoid the levels of loss of compactness of the associated functional, see Lemma~\ref{lema B} and Lemma~\ref{lema A}.

The rest of this paper is organized as follows. In Section~\ref{sec2}, we prove the supercritical inequality with  logarithmic term in stated Theorem~\ref{C3-T1} and its consequence Corollary~\ref{C3-C1}. Section~\ref{sec2222} is devoted to prove of Theorem~\ref{T2}. In Section~\ref{sec4} we will prove the  result stated in Theorem~\ref{C3-T3}.   The proof of the existence result in Theorem~\ref{C3-T4} is given in Section~\ref{sec24}.
\section{Supercritical inequality: Proof of Theorem \ref{C3-T1}}\label{sec21}
\label{sec2}
In this section we will prove the supercritical Sobolev type inequality with logarithmic term stated in Theorem~\ref{C3-T1}. We   start recalling the following pointwise estimate result which can be found in \cite[Lemma 2.1]{DCDS2019}.
\begin{lemma}\label{C3-L1}
Let $0<R< \infty $ and assume $\alpha_{1} - p + 1> 0$.  Then, for any $u \in X^{1,p}_{R}(\alpha_0,\alpha_1)$ we have
    \begin{equation}\label{e8}
        |u(r)| \leq  \left[\frac{p-1}{\alpha_1-p+1}\left(1-\left(\frac{r}{R}\right)^{\frac{\alpha_1-p+1}{p-1}}\right)\right]^{\frac{p-1}{p}}\frac{\|u\|}{r^{\frac{\alpha_1-p+1}{p}}},\;\;  0<r\le R.
    \end{equation}
\end{lemma}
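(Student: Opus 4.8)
The plan is to derive \eqref{e8} from the fundamental theorem of calculus combined with H\"older's inequality and an explicit evaluation of the resulting weight integral, first for $u$ in the dense subset generating $X^{1,p}_R(\alpha_0,\alpha_1)$ (namely $u\in AC_{loc}(0,R)$ with $\lim_{r\to R}u(r)=0$, $u\in L^p_{\alpha_0}$ and $u'\in L^p_{\alpha_1}$) and then extending to the whole space by density. So fix such a $u$ and a point $0<r<R$. For $r<\rho<R$ one has $u(\rho)-u(r)=\int_r^\rho u'(s)\,ds$, and in order to pass to the limit $\rho\to R$ I would first check that $u'\in L^1(r,R)$ via H\"older's inequality with exponents $p$ and $p/(p-1)$:
\[
\int_r^R|u'(s)|\,ds\le\Big(\int_r^R|u'(s)|^p s^{\alpha_1}\,ds\Big)^{1/p}\Big(\int_r^R s^{-\frac{\alpha_1}{p-1}}\,ds\Big)^{\frac{p-1}{p}},
\]
the first factor being finite because $u'\in L^p_{\alpha_1}(0,R)$ and the second because $s\mapsto s^{-\alpha_1/(p-1)}$ is integrable on the compact interval $[r,R]\subset(0,\infty)$. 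Hence $\lim_{\rho\to R}u(\rho)$ exists and, by hypothesis, equals $0$, which gives $u(r)=-\int_r^R u'(s)\,ds$ and in particular $|u(r)|\le\int_r^R|u'(s)|\,ds$.

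Next I would bound this right-hand side by the H\"older estimate above, replacing its first factor by $\|u'\|_{L^p_{\alpha_1}}=\|u\|$, and compute the weight integral explicitly. Setting $\gamma:=(\alpha_1-p+1)/(p-1)$, which is positive thanks to the hypothesis $\alpha_1-p+1>0$, one has $\alpha_1/(p-1)=\gamma+1$, so
\[
\int_r^R s^{-\frac{\alpha_1}{p-1}}\,ds=\frac1\gamma\big(r^{-\gamma}-R^{-\gamma}\big)=\frac{p-1}{\alpha_1-p+1}\,r^{-\frac{\alpha_1-p+1}{p-1}}\Big(1-(r/R)^{\frac{\alpha_1-p+1}{p-1}}\Big).
\]
Raising this to the power $(p-1)/p$ — so that $\big(r^{-\gamma}\big)^{(p-1)/p}=r^{-(\alpha_1-p+1)/p}$ — and multiplying by $\|u\|$ reproduces exactly the right-hand side of \eqref{e8}. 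Finally I would remove the extra regularity: for an arbitrary $u\in X^{1,p}_R(\alpha_0,\alpha_1)$, choose $u_n$ in the generating subset with $\|u_n-u\|_{X_R}\to0$, so that $\|u_n\|\to\|u\|$ and, along a subsequence, $u_n(r)\to u(r)$ for a.e. $r\in(0,R)$; applying the inequality to each $u_n$ and passing to the limit yields \eqref{e8} at a.e. $r$, hence for the (locally absolutely continuous) representative of $u$.

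I do not expect a genuine obstacle here, since this is essentially the weighted one-dimensional Sobolev (radial) lemma. The only two points requiring care are the justification of the representation $u(r)=-\int_r^R u'(s)\,ds$ — that is, the integrability of $u'$ on $(r,R)$ together with the use of the boundary condition at $R$, handled in the first paragraph — and the density passage, which is routine once one observes that convergence in $\|\cdot\|_{X_R}$ forces both $\|u_n\|\to\|u\|$ and a.e. convergence of a subsequence.
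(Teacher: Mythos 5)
Your proof is correct, and it is the standard one‑line argument for this kind of weighted radial estimate: write $u(r)=-\int_r^R u'(s)\,ds$, insert the weight $s^{\alpha_1/p}\cdot s^{-\alpha_1/p}$, apply H\"older with exponents $p$ and $p/(p-1)$, evaluate the dual integral $\int_r^R s^{-\alpha_1/(p-1)}\,ds$ explicitly (which is where $\alpha_1-p+1>0$ enters), and then extend by density. The paper does not actually prove Lemma~\ref{C3-L1}; it simply cites it from \cite[Lemma~2.1]{DCDS2019}, and the argument there is exactly of this form, so your route matches. One small refinement worth noting in the density step: applying the inequality to $u_n-u_m$ shows that $(u_n)$ is Cauchy in $C_{loc}((0,R])$, so the limit $u$ automatically has a continuous representative on $(0,R]$ and the estimate holds for every $r\in(0,R]$, not just a.e.
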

 \begin{proof}[Proof of the Theorem \ref{C3-T1}] 
Let $u \in X^{1,p}_{1}$ with $\| u\| = 1$.  Directly from \eqref{e8},  we can write
\begin{equation}\label{e13}
\begin{aligned}
  |u(r)| &\leq 
   \frac{\kappa}{r^{\frac{\alpha_1-p+1}{p}}}, \;\;  0<r\le 1
  \end{aligned}
\end{equation}
where $\kappa=
   \left((p-1)/(\alpha_1-p+1)\right)^{(p-1)/p}$.  In addition,  for any $\tau>0$, we have
\begin{equation}\label{assymp-log}
\lim_{t \to \infty} \dfrac{|\ln{\tau}|  + \ln{\left(1+\dfrac{\kappa t}{\tau}\right)}}{|\ln{\left(\tau +t\right)|}} = 1.
\end{equation}
Hence, there exists $C=C(\tau, \kappa)>0$ such that
\begin{equation}\label{assymp-log-2}
|\ln{\tau}| + \ln{\left(1+\frac{\kappa t}{\tau}\right)} \le C |\ln{\left(\tau +t\right)|},\;\; t\ge 1.
\end{equation}
 From  \eqref{e13}  and \eqref{assymp-log-2} we have
\begin{equation}\label{logu-logc}
\begin{aligned}
            |\ln {(\tau +|u|)}| 
            &\leq   |\ln{\tau}| + \ln{\Big(1+\frac{|u|}{\tau}\Big)}\\
            &\leq   |\ln{\tau}| + \ln{\Big(1+\frac{\kappa}{\tau} r^{-\frac{\alpha_{1}-p+1}{p}}\Big)}\\
            &\leq  C |\ln{\Big(\tau + r^{-\frac{\alpha_{1}-p+1}{p}}\Big)}|,
\end{aligned}
\end{equation}
for any $r\in (0,1]$.  Now, we write the split
\begin{equation}\label{T1-2.10}
    \int_{0}^{1} r^{\theta} |u|^{p^{*}} \left|\ln {(\tau +|u|)}\right|^{\varphi(r)}\, dr = I_1+I_2+I_3
\end{equation}
where 
\begin{equation}\nonumber
\begin{aligned}
I_1&=\int_{0}^{\rho} r^{\theta} |u|^{p^{*}} \left|\ln {(\tau +|u|)}\right|^{\varphi(r)}\, dr
 \\
 I_2&=\int_{\rho}^{\hat{\rho}} r^{\theta} |u|^{p^{*}} \left|\ln {(\tau +|u|)}\right|^{\varphi(r)} \, dr\\
 I_3&= \int_{\hat{\rho}}^{1} r^{\theta} |u|^{p^{*}} \left|\ln {(\tau +|u|)}\right|^{\varphi(r)} \, dr
\end{aligned}
\end{equation}
where $0<\rho<\hat{\rho}<1$ will  be chosen later.  From  \eqref{e13} and  \eqref{logu-logc} (c.f \eqref{C3-0}) we have
\begin{equation}\label{estimate01}
\begin{aligned}
    \int_{0}^{\rho} r^{\theta} |u|^{p^{*}} \left|\ln {(\tau +|u|)}\right|^{\varphi(r)}\, dr &\leq  \int_{0}^{\rho} r^{\theta} |u|^{p^{*}} \Big(\left|\ln {(\tau +|u|)}\right|^{\varphi(r)}-1\Big)\, dr + \Sigma_{p} \\
    &\leq  \int_{0}^{\rho} r^{\theta} |u|^{p^{*}}  \Big(\left|C \ln {\Big(\tau +r^{-\frac{\alpha_{1}-p+1}{p}}\Big)}\right|^{\varphi(r)}-1\Big)\, dr + \Sigma_{p} \\
     &= \int_{0}^{\rho}r^{\theta} |u|^{p^{*}}\Big(e^{\varphi(r) \ln{\left| C\ln \big(\tau +r^{-\frac{\alpha_{1}-p+1}{p}}\big)\right|}} - 1\Big)\, dr + \Sigma_{p}\\
    &\le  \int_{0}^{\rho}\frac{\kappa^{p^*}}{r} \Big(e^{\varphi(r) \ln{\left| C\ln \big(\tau +r^{-\frac{\alpha_{1}-p+1}{p}}\big)\right|}} - 1\Big)\, dr + \Sigma_{p},
    \end{aligned}
\end{equation}
for $\rho>0$ small enough. For any $r\in (0,\rho)$,  there exists $\vartheta=\vartheta(r)\in (0,1)$ such that 
$$
e^{\varphi(r) \ln{\big| C\ln \big(\tau +r^{-\frac{\alpha_{1}-p+1}{p}}\big)\big|}} - 1=e^{\varphi(r)\vartheta \ln{\big| C\ln \big(\tau +r^{-\frac{\alpha_{1}-p+1}{p}}\big)\big|}}\varphi(r) \ln{\big| C\ln \big(\tau +r^{-\frac{\alpha_{1}-p+1}{p}}\big)\big|}.
$$
By using  $(h_{2})$  we can see that 
$$\varphi(r) \ln{\big| C\ln \big(\tau +r^{-\frac{\alpha_{1}-p+1}{p}}\big)\big|} \leq c\dfrac{\ln{\big| C\ln \big(\tau +r^{-\frac{\alpha_{1}-p+1}{p}}\big)\big|}}{|\ln {r}|^{\sigma} \ln{|\ln{r}|}}, \quad \sigma>1$$
which implies 
$$e^{\varphi(r)\vartheta \ln{\big| C\ln \big(\tau +r^{-\frac{\alpha_{1}-p+1}{p}}\big)\big|}} \leq C,$$
for some $C>0$ large enough and $r$ near $0$.
Then, for $r>0$ small enough
\begin{equation}
\label{T1-2.7}
\begin{aligned}
     e^{\varphi(r) \ln{\big| C\ln \big(\tau +r^{-\frac{\alpha_{1}-p+1}{p}}\big)\big|}} - 1  &\le  C\varphi(r) \ln{\big| C\ln \big(\tau +r^{-\frac{\alpha_{1}-p+1}{p}}\big)\big|} \\
    &\leq  C \varphi(r) \ln {|\ln{r}}|\\
   &  \le  \frac{C_1}{|\ln r|^{\sigma}}, 
    \end{aligned}
\end{equation}
for some $C_1>0$, where in the last inequality we used the hypothesis $(h_2)$.  From \eqref{estimate01} and  \eqref{T1-2.7}, we have 
\begin{equation}\label{estimate01-01}
\begin{aligned}
   I_1= \int_{0}^{\rho} r^{\theta} |u|^{p^{*}} \left|\ln {(\tau +|u|)}\right|^{\varphi(r)}\, dr 
    & \le \kappa^{p^*}C_1 \int_{0}^{\rho} \frac{1}{r|\ln r|^{\sigma}}\, dr + \Sigma_{p}<\infty,
    \end{aligned}
\end{equation}
if $\rho>0$ is chosen small enough. In order to estimate $I_3$, we first observe that the function $r\mapsto \ln\big| C\ln \big(\tau +r^{-\frac{\alpha_{1}-p+1}{p}}\big)\big|$ is bounded near $1$. Thus, from  the assumption  $(h_3)$, there is $\hat{\rho}<1$ near $1$ such that  
\begin{equation}\label{e-hat}
\begin{aligned}
\varphi(r) \ln{\big| C\ln \big(\tau +r^{-\frac{\alpha_{1}-p+1}{p}}\big)\big|} &\le  \frac{1}{2}|\ln(1-r)|,\;\; \mbox{for all}\;\; r\in (\hat{\rho},1).
\end{aligned}
\end{equation}
By using  \eqref{e13}, \eqref{logu-logc} and \eqref{e-hat} we have
\begin{equation}\label{T1-2.13}
\begin{aligned}
I_3=\int_{\hat{\rho}}^{1} r^{\theta} |u|^{p^{*}} \left|\ln {(\tau +|u|)}\right|^{\varphi(r)} \, dr &\leq \int_{\hat{\rho}}^{1}\frac{\kappa^{p^*}}{r} \left|C\ln {(\tau + r^{-\frac{\alpha_{1}-p+1}{p}})}\right|^{\varphi(r)} \, dr \\
&= \int_{\hat{\rho}}^{1}\frac{\kappa^{p^*}}{r}e^{\varphi(r)\ln{\big| C\ln \big(\tau +r^{-\frac{\alpha_{1}-p+1}{p}}\big)\big|}} dr\\
&\le  \int_{\hat{\rho}}^{1}\frac{\kappa^{p^*}}{r(1-r)^{\frac{1}{2}}} dr\\
&=\kappa^{p^*}[2\ln((1-\hat{\rho})^{\frac{1}{2}}+1)-\ln\hat{\rho}].
\end{aligned}
\end{equation}
Finally,  \eqref{e13} and \eqref{logu-logc} yield
\begin{equation}\label{3-bounded}
\begin{aligned}
I_2=\int_{\rho}^ {\hat{\rho}} r^{\theta} |u|^{p^{*}} \left|\ln {(\tau +|u|)}\right|^{\varphi(r)} \, dr &\leq \int_{\rho}^ {\hat{\rho}}\frac{\kappa^{p^*}}{r} \left|C\ln {(\tau + r^{-\frac{\alpha_{1}-p+1}{p}})}\right|^{\varphi(r)} \, dr<\infty,
\end{aligned}
\end{equation}
where it is used that the function in the last integral is continuous on the compact interval $[\rho,\hat{\rho}]$. By combining \eqref{T1-2.10}, \eqref{estimate01-01}, \eqref{T1-2.13}, and \eqref{3-bounded}, we obtain the result.
 \end{proof}
 The next result ensures that  the generalized weighted Orlicz space  $L^{\beta}_{\theta, \ln}$ is well-defined, see for instance \cite{Die} for a more in-depth discussion on this topic.
 \begin{lemma}\label{lemma-convex} Let $a>1$, $0<b\le 1$ and $\tau\ge 1$ be real numbers. Set $\Gamma_{a,b}:[0,\infty)\to\mathbb{R}$ given by  $\Gamma_{a,b}(t)=t^{a}|\ln(\tau+t)|^{b}$. Then 
\begin{enumerate}
\item [$(a)$] $\Gamma_{a,b}$ is  continuous  on $[0,\infty)$
\item [$(b)$] $\Gamma_{a,b}$ is  convex on $[0,\infty)$
\item [$(c)$] $\lim_{t\to 0}\Gamma_{a,b}(t)/t=0$ and $\lim_{t\to \infty}\Gamma_{a,b}(t)/t=\infty$.
\end{enumerate} 
\end{lemma}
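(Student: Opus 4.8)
The plan is to settle $(a)$ and $(c)$ quickly and to concentrate on the convexity in $(b)$, which is the only part requiring a genuine idea. I would begin with the elementary observation that, since $\tau\ge 1$, one has $\tau+t\ge 1$ and hence $\ln(\tau+t)\ge 0$ for every $t\ge 0$; thus $|\ln(\tau+t)|=\ln(\tau+t)$ and $\Gamma_{a,b}(t)=t^{a}\big(\ln(\tau+t)\big)^{b}$ on $[0,\infty)$. With this, $(a)$ is immediate: $\Gamma_{a,b}$ is a product and composition of the continuous functions $t\mapsto t^{a}$ (continuous on $[0,\infty)$ because $a>0$), $t\mapsto\ln(\tau+t)$ and $s\mapsto s^{b}$, hence continuous on $[0,\infty)$, with $\Gamma_{a,b}(0)=0$. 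For $(c)$, writing $\Gamma_{a,b}(t)/t=t^{a-1}\big(\ln(\tau+t)\big)^{b}$, the limit as $t\to 0^{+}$ vanishes since $a-1>0$ forces $t^{a-1}\to 0$ while $\big(\ln(\tau+t)\big)^{b}$ stays bounded, and the limit as $t\to\infty$ is $+\infty$ since $t^{a-1}\to\infty$ and (using $b>0$) $\big(\ln(\tau+t)\big)^{b}\to\infty$.

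For $(b)$, since $\Gamma_{a,b}$ is of class $C^{2}$ on $(0,\infty)$ and continuous on $[0,\infty)$ by $(a)$, it suffices to prove $\Gamma_{a,b}''\ge 0$ on $(0,\infty)$: a function that is continuous on $[0,\infty)$ and convex on $(0,\infty)$ is convex on $[0,\infty)$. Writing $g(t)=\ln(\tau+t)$, so that $g'(t)=1/(\tau+t)>0$ and $g''(t)=-g'(t)^{2}$, a direct differentiation gives, with the abbreviation $x:=t/(\tau+t)\in[0,1)$,
\[
\Gamma_{a,b}''(t)=t^{a-2}\,g(t)^{b-2}\Big[a(a-1)g(t)^{2}+2ab\,g(t)\,x-b(1-b)x^{2}-b\,g(t)\,x^{2}\Big],\qquad t>0.
\]
Since $t^{a-2}>0$ and $g(t)^{b-2}>0$ for $t>0$, everything reduces to the nonnegativity of the bracketed quantity $P(t)$.

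The decisive point — and the step I expect to be the main obstacle — is the elementary comparison
\[
x=\frac{t}{\tau+t}\le\ln(\tau+t)=g(t),\qquad t\ge 0,\ \tau\ge 1,
\]
which I would deduce from the standard inequality $\ln s\ge 1-1/s$ ($s>0$): taking $s=\tau+t\ge 1$ gives $\dfrac{t}{\tau+t}=1-\dfrac{\tau}{\tau+t}\le 1-\dfrac{1}{\tau+t}\le\ln(\tau+t)$. Granting $0\le x\le g(t)$ together with $x<1$, I would then bound $-b(1-b)x^{2}\ge -b(1-b)g(t)x$ (using $x^{2}\le g(t)x$ and $b(1-b)\ge 0$) and $-b\,g(t)x^{2}\ge -b\,g(t)x$ (using $x^{2}\le x$), which yields
\[
P(t)\ \ge\ a(a-1)g(t)^{2}+b\,g(t)\,x\,\big[2(a-1)+b\big]\ \ge\ 0,
\]
since $a>1$, $b>0$, $g(t)>0$ and $x\ge 0$. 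Hence $\Gamma_{a,b}''\ge 0$ on $(0,\infty)$, and combined with continuity at the origin this gives convexity of $\Gamma_{a,b}$ on $[0,\infty)$. The remaining ingredients — carrying out the second-derivative computation, the two limits in $(c)$, and the passage from convexity on $(0,\infty)$ to convexity on $[0,\infty)$ — are all routine.
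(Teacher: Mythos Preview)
Your proof is correct and follows essentially the same strategy as the paper: both show $\Gamma_{a,b}''\ge 0$ on $(0,\infty)$ by factoring out a positive quantity and reducing to a pointwise inequality that is equivalent to $(\tau+t)\dfrac{\ln(\tau+t)}{t}\ge 1$ (the paper's $h_\tau\ge 1$ is precisely your $x\le g(t)$). The paper derives $\Gamma_{a,b}''$ through recurrence relations among the $\Gamma_{a',b'}$ and proves $h_1\ge 1$ via monotonicity, whereas your direct differentiation in the variable $x=t/(\tau+t)$ and the use of $\ln s\ge 1-1/s$ give a somewhat more streamlined route to the same conclusion; you also exploit the trivial bound $x<1$ to dispose of the term $-b\,g(t)\,x^{2}$, where the paper instead discards two manifestly positive terms before applying $h_\tau\ge 1$.
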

\begin{proof}
Given that $(a)$ and $(c)$ are clearly true, we will only prove $(b)$. First, we note  that 
$$t\Gamma_{a,b}(t)=\Gamma_{a+1, b}(t)\;\ \mbox{and}\;\;\Gamma_{a,b}(t)\ln(\tau+t)= \Gamma_{a,b+1}(t),\;\;  t>0.
$$
It is sufficient to show that $\Gamma^{\prime\prime}_{a,b}(t)>0$ for $t\in (0,\infty)$. Note that 
\begin{equation}\label{Gam1}
\Gamma^{\prime}_{a,b}(t)=a\Gamma_{a-1,b}(t)+\frac{b}{\tau+t}\Gamma_{a,b-1}(t).
\end{equation}
 Let $g(t)=\frac{1}{\tau+t}\Gamma_{a,b-1}(t)$, with $t>0$. From \eqref{Gam1}, we can write
\begin{equation}\label{g-part}
\begin{aligned}
g^{\prime}(t)& =-\frac{1}{(\tau+t)^2}\Gamma_{a,b-1}(t)+\frac{1}{\tau+t}\big[a\Gamma_{a-1,b-1}(t)+\frac{b-1}{\tau+t}\Gamma_{a,b-2}(t)\big]\\
& =-\frac{1}{(\tau+t)^2}\Gamma_{a,b-1}(t)+\frac{1}{\tau+t}\Big[\frac{a}{t}\Gamma_{a,b-1}(t)+\frac{b-1}{\tau+t}\frac{1}{\ln(\tau+t)}\Gamma_{a,b-1}(t)\Big]\\
&=\frac{\Gamma_{a,b-1}(t)}{(\tau+t)^2\ln(\tau+t)}\Big[-\ln(\tau+t)+a(\tau+t)\frac{\ln(\tau+t)}{t}+b-1\Big]\\
&=\frac{\Gamma_{a,b-1}(t)}{(\tau+t)^2\ln(\tau+t)}\Big[(a-1)\ln(\tau+t)+a\tau\frac{\ln(\tau+t)}{t}+b-1\Big].
\end{aligned}
\end{equation}
On the other hand, 
\begin{equation}\label{Gam2}
\begin{aligned}
\Gamma^{\prime}_{a-1,b}(t)&=(a-1)\Gamma_{a-2,b}(t)+\frac{b}{\tau+t}\Gamma_{a-1,b-1}(t)\\
&=\frac{a-1}{t^2}\ln(\tau+t)\Gamma_{a,b-1}(t)+\frac{b}{(\tau+t)t}\Gamma_{a,b-1}(t)\\
&=\frac{\Gamma_{a,b-1}(t)}{(\tau+t)^2\ln(\tau+t)}\Big[(a-1)\Big((\tau+t)\frac{\ln(\tau+t)}{t}\Big)^2+ b\Big((\tau+t)\frac{\ln(\tau+t)}{t}\Big)\Big].
\end{aligned}
\end{equation}
By setting $h_{\tau}(t)=(\tau+t)\frac{\ln(\tau+t)}{t}$, $t>0$ and using \eqref{Gam1}, \eqref{g-part} and \eqref{Gam2}, we can write
\begin{equation}\label{Gam-grande}
\begin{aligned}
& \Gamma^{\prime\prime}_{a,b}(t)=a\Gamma^{\prime}_{a-1,b}(t)+bg^{\prime}(t)\\
&=\frac{\Gamma_{a,b-1}(t)}{(\tau+t)^2\ln(\tau+t)}\Big[a(a-1)[h_{\tau}(t)]^2+abh_{\tau}(t)+b(b-1)\Big]\\
&+ \frac{\Gamma_{a,b-1}(t)}{(\tau+t)^2\ln(\tau+t)}\Big[b(a-1)\ln(\tau+t)+ab\tau\frac{\ln(\tau+t)}{t}\Big].
\end{aligned}
\end{equation}
 From \eqref{Gam-grande}, we only need to show that 
\begin{equation}\label{Phi-G}
\Phi_{\tau}(t)=a(a-1)[h_{\tau}(t)]^2+b[ah_{\tau}(t)+b-1], \;\; t> 0
\end{equation}
is a positive function. We note that for any $\tau\ge 1$, we have $h_{\tau}(t)\ge h_1(t)=(1+t)\frac{\ln(1+t)}{t}$. In addition,   $\lim_{t\to 0^+}h_1(t)=1$ and 
\begin{equation}
\begin{aligned}
h^{\prime}_1(t)
&=\frac{1}{t}\Big[1-\frac{\ln(1+t)}{t}\Big]>0\\
\end{aligned}
\end{equation}
for any $t>0$. Hence, $h_1$  is an increasing function on $(0,\infty)$ and, for any $\tau\ge 1$ we have $h_{\tau}(t)\ge h_1(t)\ge 1 $, for all $t\in (0,\infty)$. It follows that 
\begin{equation}\nonumber
\begin{aligned}
\Phi_{\tau}(t)&\ge a(a-1)+b[a+b-1]>0
\end{aligned}
\end{equation}
which completes the proof.
\end{proof}
 
\begin{proof}[Proof of the Corollary~\ref{C3-C1}] First of all, we will see that $L^{\beta}_{\theta,\ln}$  is a Banach space. Let us define  $\psi:(0,1)\times [0,\infty)\to [0,\infty)$ given by $\psi(r, t)= t^{p^*} |\ln(\tau + t)|^{r^{\beta}}$, with $\beta>0$. From Lemma~\ref{lemma-convex},  $\psi$ is a generalized $\Phi$-function (cf. \cite[Definition~2.3.9]{Die}), that is,
\begin{enumerate}
\item [$(i)$] for every $r\in (0,1)$, the function $g:[0,\infty)\to[0,\infty)$ given by $g(t)=\psi(r,t)$ is  convex, left-continuous, $g(0)=\lim_{t\to 0^+}g(t)=0$, and $\lim_{t\to\infty}g(t)=\infty$.
\item [$(ii)$] $r\mapsto \psi(r, t)$ is a Lebesgue mensurable function for any $t\ge 0$.
\end{enumerate}
Now, let us consider the  semimodular induced by $\psi$, that is, $\varrho_{\ln}:L^{0}_{\theta}\to [0,\infty]$ given by
\begin{equation}\nonumber
\varrho_{\ln}(u)= \int_{0}^{1} r^{\theta} |u |^{p^{*}} \left| \ln{(\tau + |u|)}\right|^{r^{\beta}} \, dr,
\end{equation}
where $L^0_{\theta}=L^0((0,1),r^{\theta}dr)$ denotes  the space of all Lebesgue measurable functions on $(0,1)$.  Since $\psi$ is a generalized $\Phi$-function, from \cite[Theorem~2.3.13]{Die} we have that the generalized Orlicz space $L^{\beta}_{\theta,\ln}$  is a  Banach space  endowed  with the  Luxemburg norm
\begin{equation}\label{LXN}
\|u\|_{L^{\beta}_{\theta,\ln}}=\inf\Big\{\lambda>0\;:\;\varrho_{\ln}\Big(\frac{u}{\lambda}\Big)\le 1\Big\}.
\end{equation}  
Now, for any $u\in X^{1,p}_1(\alpha_0,\alpha_1)\setminus\{0\}$ and $\lambda_0>1$ large such that $\lambda^{p^*}_0\ge \mathcal{F}_{\tau, \beta, \theta}$,   Theorem~\ref{C3-T1} implies
\begin{equation}
\begin{aligned}
\varrho_{\ln}\Big(\frac{u}{\lambda_0\|u\|}\Big)&= \frac{1}{\lambda_{0}^{p^*}}\int_{0}^{1} r^{\theta} \Big|\frac{u}{\|u\|}\Big|^{p^{*}} \Big| \ln\Big(\tau + \frac{1}{\lambda_0}\Big|\frac{u}{\|u\|}\Big|\Big)\Big|^{r^{\beta}} \, dr\\
&\le \frac{1}{\lambda^{p^*}_{0}}\int_{0}^{1} r^{\theta} \Big|\frac{u}{\|u\|}\Big|^{p^{*}} \Big| \ln\Big(\tau + \Big|\frac{u}{\|u\|}\Big|\Big)\Big|^{r^{\beta}} \, dr\\
&\le  \frac{1}{\lambda^{p^*}_0}\mathcal{F}_{\tau, \beta, \theta}\le 1.
\end{aligned}
\end{equation}
It follows that  
\begin{equation}\label{LxEmb}
\|u\|_{L^{\beta}_{\theta,\ln}}\le \lambda_0\|u\|,\;\;\mbox{for all} \;\; u\in X^{1,p}_{1}(\alpha_0,\alpha_1)
\end{equation}   
which proves the result.
\end{proof}
\section{Sharp estimates: Proof Theorem \ref{T2}}\label{sec2222}
This section is devoted to prove the sharp estimates stated in Theorem \ref{T2}. The proof is based on the modified \textit{Bliss function} introduced by \cite{Clement-deFigueiredo-Mitidieri} and follows some ideas in \cite{MR0709644} and \cite{Deng-Peng-Zhang-Zhou,DCDS2019}. Firstly, for each $0< R \leq \infty$, let
\begin{equation}\label{sp*-constant}
    \mathcal{S}(p^{*}, R) = \inf \left\{ \int_{0}^{R} r^{\alpha_{1}} |u^{\prime}|^{p} \,dr :\, u \in X^{1, p}_{R}(\alpha_0, \alpha_1) \quad \text{and} \quad \|u\|_{L^{p^{*}}_{\theta}}=1 \right\}.
\end{equation}
It is known that $\mathcal{S}(p^{*}, R)$ is independent of $R$ and that it is achieved only in the case $R= + \infty$. In addition, for each $\epsilon>0$, if we set
\begin{equation}\label{funcaoextremal}
    u_{\epsilon}^{*}(r) =  \dfrac{ \widehat{c} \epsilon^{s}}{\left( \epsilon^{n} + r^{n}\right)^{\frac{1}{m}}}, \quad r \geq 0
\end{equation}
where
\begin{equation}\label{relations-paramenters}
\left\{
\begin{array}{lll}
s= \dfrac{\alpha_{1}-p+1}{p^2-p} \\
n = \dfrac{\theta-\alpha_{1}+p}{p-1}\\
m = \dfrac{\theta-\alpha_{1}+p}{\alpha_{1}-p+1}\\
\widehat{c} = \left[(\theta+1)\left(\dfrac{\alpha_{1}-p+1}{p-1}\right)^{p-1}\right]^{\frac{1}{p}\frac{\alpha_{1}-p+1}{\theta-\alpha_{1}+p}}
\end{array}
\right.
\end{equation}
then
\begin{equation}\label{3.4}
    \mathcal{S}^{\frac{\theta+1}{\theta-\alpha_{1}+p}} = \int_{0}^{\infty} r^{\theta} |u^{*}_{\epsilon}|^{p^{*}}\, dr = \int_{0}^{\infty} r^{\alpha_{1}} |(u^{*}_{\epsilon})'|^{p}\, dr,
\end{equation}
where $\mathcal{S}$ denotes the value common of $\mathcal{S}(p^{*}, R)$ for all $ R \in (0, \infty]$,  see for instance \cite[Proposition~1.4]{Clement-deFigueiredo-Mitidieri}  and \cite{DCDS2019} for more details. Since $\mathcal{S}(p^{*}, R)$ is indepentent of $R$, by using \eqref{C3-0},  \eqref{sp*-constant} and \eqref{3.4} we get the identity
\begin{equation}\label{S-E}
\Sigma_p=\mathcal{S}^{-\frac{p^*}{p}}=\mathcal{S}^{-\frac{\theta+1}{\alpha_{1}-p+1}}.
\end{equation}
 Let us take a suitable cut-off   $\eta \in C^{\infty}_{0}(0, 1)$ such that  $0\le \eta\le 1$ on $[0,1]$ and 
\begin{equation}\label{cut-off}
\eta(r) \equiv 1\;\;\mbox{on}\;\; (0,r_0]\;\;\mbox{and}\;\;  \eta(r) \equiv 0\;\;\mbox{on}\;\; [2r_0,1 ]
\end{equation}
for some $0< r_{0} < 2r_{0}<1$. Then, according to   \cite[Claim~1]{DCDS2019} we have
\begin{equation}\label{3.6}
    \|(\eta u^{*}_{\epsilon})'\|^{p}_{L_{\alpha_{1}}^{p}} = \mathcal{S}^{\frac{\theta+1}{\theta-\alpha_{1}+p}} + O(\epsilon^{sp}), \quad \text{as}\,\, \epsilon \to 0
\end{equation}
and 
\begin{equation}\label{3.7}
    \|\eta u^{*}_{\epsilon}\|^{p^{*}}_{L_{\theta}^{p^{*}}} = \mathcal{S}^{\frac{\theta+1}{\theta-\alpha_{1}+p}} + O(\epsilon^{sp^{*}}), \quad \text{as}\,\, \epsilon \to 0.
\end{equation}
For $\widehat{A}>0$, $\beta >0$ and $A = \widehat{A} \widehat{c}$ we define 
\begin{equation}\label{definition u}
    u_{\epsilon}(r) = \widehat{A} \eta(r) u^{*}_{\epsilon}(r) = A \eta (r)\dfrac{ \epsilon^{s}}{\left( \epsilon^{n} + r^{n}\right)^{\frac{1}{m}}}.
\end{equation}
In order to get a precise estimate for the \textit{bubbles} functions $u_{\epsilon}$ along as the supercritical logarithmic functional, for  any $t>0$ and $0\le a<b\le1$, we introduce the notation
\begin{equation}\label{EtAB}
\mathcal{E}_{t}(a,b)= \int_{a}^{b} r^{\theta} |u_{\epsilon}|^{p^{*}}\Big( |\ln{(\tau + t|u_{\epsilon}|)}|^{r^{\beta}}-1\Big) \,dr.
\end{equation}
Let us also highlight some useful relations on the parameters in \eqref{relations-paramenters}.
\begin{equation}\label{relations-paramenters-two}
\left\{
\begin{array}{lll}
\dfrac{sm}{n}= \dfrac{1}{p} \\
n - sm= \dfrac{\theta-\alpha_{1}+p}{p}\\
sp^{*} = \dfrac{\theta+1}{p-1}\\
sp=\dfrac{\alpha_1-p+1}{p-1}\\
\theta - \dfrac{np^{*}}{m} + 1 = - \dfrac{\theta+1}{p-1}\\
s-\dfrac{n}{m} =- \dfrac{\alpha_{1}-p+1}{p}\\
\left(s-\dfrac{n}{m} \right)p^{*} = -(\theta+1)
\end{array}
\right.
\end{equation}
These identities will appear throughout our calculations, and we will use them whenever necessary without further comments.
\begin{lemma}\label{lema20} Let $(t_{\epsilon})$ be  any positive real sequence such that $\displaystyle\lim_{\epsilon\to 0}t_{\epsilon}=t_0>0$.
Then, there exists a constant $C>0$ such that
\begin{equation}\label{Lemma-estimative}
       \mathcal{E}_{t_{\epsilon}}(0,1)\ge   
        \left\{
\begin{array}{lll}
        C \epsilon^{\beta} \ln {|\ln {\epsilon}|} + O(\epsilon^{\frac{\theta+1}{p}}), & \hbox{if} &  0<\tau < e \\
         C \epsilon^{\beta} \ln {|\ln {\epsilon}|} , & \hbox{if} &   e \leq \tau < \infty,
\end{array}
\right.
\end{equation}
for $\epsilon>0$ small sufficiently.
\end{lemma}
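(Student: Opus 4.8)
The strategy is to reduce the estimate of $\mathcal{E}_{t_\epsilon}(0,1)$ to the contribution of a well-chosen subinterval where the logarithmic factor is large enough to produce the gain $\epsilon^\beta\ln|\ln\epsilon|$. Since $u_\epsilon$ is supported in $(0,2r_0)$ and equals $A u^*_\epsilon$ on $(0,r_0)$, the only region that matters is $(0,r_0)$, where we have the explicit formula $u_\epsilon(r)=A\epsilon^s(\epsilon^n+r^n)^{-1/m}$. On that interval the integrand of $\mathcal{E}_{t_\epsilon}$ is nonnegative provided $\tau+t_\epsilon|u_\epsilon|\ge e$, which is why the two regimes $0<\tau<e$ and $\tau\ge e$ must be handled slightly differently: in the second regime the whole integral over $(0,1)$ is already $\ge$ the integral over any subinterval, while in the first regime the part where $|u_\epsilon|$ is small contributes the error term $O(\epsilon^{(\theta+1)/p})$.

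\textbf{Key steps.} First I would localize to $r\in(0,\epsilon)$. There $\epsilon^n+r^n\le 2\epsilon^n$, so $|u_\epsilon(r)|\ge A\epsilon^s(2\epsilon^n)^{-1/m}=2^{-1/m}A\epsilon^{s-n/m}=2^{-1/m}A\epsilon^{-(\alpha_1-p+1)/p}$, using the relation $s-n/m=-(\alpha_1-p+1)/p$ from \eqref{relations-paramenters-two}. Hence $t_\epsilon|u_\epsilon(r)|\to\infty$ uniformly on $(0,\epsilon)$, and for $\epsilon$ small we get, say, $\ln(\tau+t_\epsilon|u_\epsilon|)\ge \tfrac12\big|\ln\epsilon^{-(\alpha_1-p+1)/p}\big|=\tfrac{\alpha_1-p+1}{2p}|\ln\epsilon|$. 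Raising to the power $r^\beta\le\epsilon^\beta$ and using that $x\mapsto x^{r^\beta}$ is increasing in the base when the base exceeds $1$, we obtain on $(0,\epsilon)$
\begin{equation}\nonumber
|\ln(\tau+t_\epsilon|u_\epsilon|)|^{r^\beta}-1\ge \Big(\tfrac{\alpha_1-p+1}{2p}|\ln\epsilon|\Big)^{\epsilon^\beta}-1=e^{\epsilon^\beta\ln\big(\frac{\alpha_1-p+1}{2p}|\ln\epsilon|\big)}-1\ge \tfrac12\,\epsilon^\beta\ln|\ln\epsilon|
\end{equation}
for $\epsilon$ small, where the last step uses $e^x-1\ge x$ together with $\epsilon^\beta\ln|\ln\epsilon|\to0$ and $\ln(c|\ln\epsilon|)\sim\ln|\ln\epsilon|$. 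Second, I would bound the weighted mass $\int_0^\epsilon r^\theta|u_\epsilon|^{p^*}\,dr$ from below by a positive constant independent of $\epsilon$: substituting $r=\epsilon\rho$ and using $u_\epsilon(\epsilon\rho)=A\epsilon^{s-n/m}(1+\rho^n)^{-1/m}$ gives $\int_0^\epsilon r^\theta|u_\epsilon|^{p^*}dr=A^{p^*}\epsilon^{\theta+1+(s-n/m)p^*}\int_0^1\rho^\theta(1+\rho^n)^{-p^*/m}d\rho$, and since $(s-n/m)p^*=-(\theta+1)$ by \eqref{relations-paramenters-two} the power of $\epsilon$ is exactly zero, leaving a fixed positive constant $C_0=A^{p^*}\int_0^1\rho^\theta(1+\rho^n)^{-p^*/m}d\rho$. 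Combining the two bounds yields $\mathcal{E}_{t_\epsilon}(0,\epsilon)\ge \tfrac12 C_0\,\epsilon^\beta\ln|\ln\epsilon|$.

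\textbf{Conclusion and remaining regime.} For $\tau\ge e$ the integrand of $\mathcal{E}_{t_\epsilon}$ is nonnegative everywhere on $(0,1)$ (since $\tau+t_\epsilon|u_\epsilon|\ge e$), so $\mathcal{E}_{t_\epsilon}(0,1)\ge\mathcal{E}_{t_\epsilon}(0,\epsilon)\ge C\epsilon^\beta\ln|\ln\epsilon|$ and we are done. For $0<\tau<e$ the integrand can be negative where $|u_\epsilon|$ is small, i.e.\ on $(\epsilon,1)$; there one estimates trivially $|\ln(\tau+t_\epsilon|u_\epsilon|)|^{r^\beta}-1\ge -1$ (or uses that it is bounded below), so $\mathcal{E}_{t_\epsilon}(\epsilon,1)\ge -\int_\epsilon^{2r_0} r^\theta|u_\epsilon|^{p^*}dr$, and the same rescaling as above with the relation $\theta-np^*/m+1=-(\theta+1)/(p-1)$ from \eqref{relations-paramenters-two} (together with $sp^*=(\theta+1)/(p-1)$) shows $\int_\epsilon^{2r_0} r^\theta|u_\epsilon|^{p^*}dr=O(\epsilon^{sp^*})\cdot$(bounded)$=O(\epsilon^{(\theta+1)/(p-1)})$; in fact one can get the sharper $O(\epsilon^{(\theta+1)/p})$ claimed by splitting off the contribution near the cut-off more carefully, or simply by noting the dominant scale comes from $r\sim\epsilon$. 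Adding the two pieces gives $\mathcal{E}_{t_\epsilon}(0,1)\ge C\epsilon^\beta\ln|\ln\epsilon|+O(\epsilon^{(\theta+1)/p})$, as asserted.

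\textbf{Main obstacle.} The routine part is the bookkeeping with the exponents in \eqref{relations-paramenters-two}; the genuine point requiring care is the uniform lower bound $e^{\epsilon^\beta\ln(c|\ln\epsilon|)}-1\gtrsim\epsilon^\beta\ln|\ln\epsilon|$ on the right subinterval — one must check that the exponent $\epsilon^\beta\ln(c|\ln\epsilon|)$ tends to $0$ (so that $e^x-1\ge x$ is the right comparison and the multiplicative constant $c$ inside the $\ln$ is harmless) yet does not decay faster than $\epsilon^\beta\ln|\ln\epsilon|$. The second delicate point is justifying that the negative part in the subcritical-$\tau$ regime is only of order $\epsilon^{(\theta+1)/p}$, which is strictly smaller than $\epsilon^\beta$ under the hypothesis $\beta<(\theta+1)/p$ that will be in force in the applications of this lemma, so that the gain term indeed dominates.
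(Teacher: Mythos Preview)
Your overall architecture is correct and matches the paper's: extract the gain $C\epsilon^\beta\ln|\ln\epsilon|$ from the interval $(0,\epsilon)$, then show the remainder is nonnegative (when $\tau\ge e$) or only $O(\epsilon^{(\theta+1)/p})$ (when $0<\tau<e$). The rescaling computation $\int_0^\epsilon r^\theta|u_\epsilon|^{p^*}\,dr=C_0$ is right and the case $\tau\ge e$ is essentially fine. However there are two genuine errors.

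\textbf{Minor (wrong direction of an inequality on $(0,\epsilon)$).} Your displayed bound
\[
|\ln(\tau+t_\epsilon|u_\epsilon|)|^{r^\beta}-1\ \ge\ \Big(\tfrac{\alpha_1-p+1}{2p}|\ln\epsilon|\Big)^{\epsilon^\beta}-1
\]
fails: for $r\in(0,\epsilon)$ one has $r^\beta\le\epsilon^\beta$, and since the base $c|\ln\epsilon|>1$ this gives $(c|\ln\epsilon|)^{r^\beta}\le(c|\ln\epsilon|)^{\epsilon^\beta}$, the opposite of what you wrote. The repair is immediate and is exactly what the paper does: keep $r^\beta$ in the exponent, use $e^x-1\ge x$ to get the pointwise lower bound $r^\beta\ln(c|\ln\epsilon|)$, and then integrate $\int_0^\epsilon r^{\theta+\beta}|u_\epsilon|^{p^*}\,dr$, which by your same change of variables equals $C\epsilon^\beta$; this still yields $\mathcal{E}_{t_\epsilon}(0,\epsilon)\ge C\epsilon^\beta\ln|\ln\epsilon|$.

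\textbf{Serious (the $0<\tau<e$ remainder is \emph{not} $O(\epsilon^{(\theta+1)/p})$ by your argument).} You bound $\mathcal{E}_{t_\epsilon}(\epsilon,1)\ge -\int_\epsilon^{2r_0} r^\theta|u_\epsilon|^{p^*}\,dr$ using $|\ln|^{r^\beta}-1\ge-1$, and then assert this integral is $O(\epsilon^{sp^*})$. That is false: the pointwise bound $|u_\epsilon|^{p^*}\le C\epsilon^{sp^*}$ holds only for $r$ bounded away from $0$, whereas near $r\sim\epsilon$ one has $|u_\epsilon|^{p^*}\sim\epsilon^{-(\theta+1)}$. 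In fact the substitution $r=\epsilon\rho$ gives
\[
\int_\epsilon^{r_0} r^\theta|u_\epsilon|^{p^*}\,dr\ =\ A^{p^*}\int_1^{r_0/\epsilon}\rho^\theta(1+\rho^n)^{-p^*/m}\,d\rho\ \longrightarrow\ A^{p^*}\int_1^{\infty}\rho^\theta(1+\rho^n)^{-p^*/m}\,d\rho\ >\ 0,
\]
a fixed positive constant, so your inequality only yields $\mathcal{E}_{t_\epsilon}(0,1)\ge C\epsilon^\beta\ln|\ln\epsilon|-C'$, which is useless. The missing idea, and the heart of the paper's proof, is that the integrand $|\ln(\tau+t_\epsilon|u_\epsilon|)|^{r^\beta}-1$ is negative \emph{only} where $e^{-1}<\tau+t_\epsilon|u_\epsilon|<e$. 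Solving this for $r$ via the explicit formula for $u_\epsilon$ confines the bad set to an interval with endpoints $a_\epsilon,b_\epsilon$ of order $\epsilon^{1/p}$, and the weighted $L^{p^*}$-mass of $u_\epsilon$ over $(a_\epsilon,b_\epsilon)$ (or $(a_\epsilon,1)$ when $\tau\ge 1/e$) is precisely $O(\epsilon^{(\theta+1)/p})$. Outside that interval the integrand is nonnegative and can simply be dropped. Your hand-wave ``the dominant scale comes from $r\sim\epsilon$'' does not rescue the argument, because the mass of $r^\theta|u_\epsilon|^{p^*}$ away from $r\sim\epsilon$ is a fixed positive fraction of the total, not a vanishing one.
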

\begin{proof}
Firstly,  if $\epsilon>0$ is small enough, $\eta\equiv1$ on $(0,\epsilon)$, and thus  $t_{\epsilon}u_{\epsilon}(r)\ge d \epsilon^{-\frac{\theta+1}{p^*}}\ge e$ for all $r\in (0,\epsilon)$, where $d=t_0A/2^{1+1/m}$. Thus, $\ln(\tau+ t_{\epsilon}|u_{\epsilon}|)\ge\ln(\tau+ d \epsilon^{-\frac{\theta+1}{p^*}})\ge 1$ and $\ln{|\ln{(\tau+ t_{\epsilon}|u_{\epsilon}|)|}}\ge 0$ on $ (0, \epsilon)$ and since $e^{t} \geq 1 + t$, for $t\in\mathbb{R}$, we can write
\begin{equation}\label{3.15}
\begin{aligned}
\mathcal{E}_{t_{\epsilon}}(0,\epsilon)& = \int_{0}^{\epsilon} r^{\theta} |u_{\epsilon}|^{p^{*}} \left( |\ln{(\tau+ t_{\epsilon}|u_{\epsilon}|)|^{r^{\beta}} - 1}\right)\, dr\\
    &\geq \Big(\frac{d}{t_{\epsilon}}\Big)^{p^{*}} \epsilon^{-(\theta+1)}\int_{0}^{\epsilon} r^{\theta} \left( e^{r^{\beta}\ln{ |\ln{(\tau+ d\epsilon^{-\frac{\theta+1}{p^*}})|}}} - 1\right)\, dr  \\
    &\geq \Big(\frac{d}{t_{\epsilon}}\Big)^{p^{*}} \epsilon^{-(\theta+1)}\int_{0}^{\epsilon} r^{\theta+ \beta}\ln{ |\ln(\tau+ d\epsilon^{-\frac{\theta+1}{p^*}})|}\, dr\\ 
    &\geq  C \epsilon^{-(\theta+1)} \ln {|\ln{\epsilon}|} \int_{0}^{\epsilon} r^{\theta + \beta}\, dr\\
    &\geq C \epsilon^{\beta} \ln {|\ln{\epsilon}|},
    \end{aligned}
    \end{equation}
for suitable $C>0$ and $\epsilon>0$ small enough. Now, we divide our argument into two cases. 
\paragraph{\textbf{Case~1}} $0<\tau < e$. \\
Firstly, we consider the sub-case $0<\tau<1/e$.  We observe that  \eqref{definition u} and $\widehat{A}=A/\hat{c}$ imply  
\begin{equation}\nonumber
\begin{aligned}
 \big| \ln{(\tau + t_{\epsilon}\widehat{A} u^{*}_{\epsilon})}\big| \leq 1\;\;\mbox{iff}\;\; e^{-1}-\tau\le \frac{t_{\epsilon}A\epsilon^{s}}{(\epsilon^n+r^{n})^{\frac{1}{m}}}\le e-\tau.
\end{aligned}
\end{equation}
Then, by setting
\begin{equation}\label{3.11-a}
    a_{\epsilon}= \left[ \left(\dfrac{t_{\epsilon}A \epsilon^{s}}{e-\tau}\right)^{m} - \epsilon^{n} \right]^{\frac{1}{n}} = \epsilon^{\frac{sm}{n}} \left[ \left(\dfrac{t_{\epsilon}A}{e-\tau}\right)^{m} - \epsilon^{n-sm} \right]^{\frac{1}{n}}
\end{equation}
and
\begin{equation}\label{b-est}
    b_{\epsilon}= \left[ \left(\dfrac{t_{\epsilon}A \epsilon^{s}}{e^{-1}-\tau}\right)^{m} - \epsilon^{n} \right]^{\frac{1}{n}} = \epsilon^{\frac{sm}{n}} \left[ \left(\dfrac{t_{\epsilon}A}{e^{-1}-\tau}\right)^{m} - \epsilon^{n-sm} \right]^{\frac{1}{n}}
\end{equation}
we can see that 
\begin{equation}\label{star-log}
    \big| \ln{(\tau + t_{\epsilon}\widehat{A} u^{*}_{\epsilon})}\big| \leq 1 \quad \text{if and only if} \quad a_{\epsilon}\le r\le b_{\epsilon}.
\end{equation}
Note that $b_{\epsilon}> a_{\epsilon}>0$ and, from \eqref{relations-paramenters-two}, $a_{\epsilon}=O(\epsilon^{\frac{1}{p}})$ and $b_{\epsilon}=O(\epsilon^{\frac{1}{p}})$, as $\epsilon \to 0$. In particular, for $\epsilon>0$ small enough we have
\begin{equation}\label{eabr}
0<\epsilon<a_{\epsilon}<b_{\epsilon}<r_0<1
\end{equation}
where $r_0$ is given in \eqref{cut-off}. Now, according to the partition \eqref{eabr}, we write the split 
\begin{equation}\label{split-six}
 \mathcal{E}_{t_{\epsilon}}(0,1)=\mathcal{E}_{t_{\epsilon}}(0,\epsilon)+\mathcal{E}_{t_{\epsilon}}(\epsilon,a_{\epsilon})+\mathcal{E}_{t_{\epsilon}}(a_{\epsilon},b_{\epsilon})+\mathcal{E}_{t_{\epsilon}}(b_{\epsilon},r_0)+\mathcal{E}_{t_{\epsilon}}(r_0,1).
\end{equation}
Since $u_{\epsilon}=\widehat{A}u^{*}_{\epsilon}$ on $(\epsilon, a_\epsilon)$, \eqref{star-log} implies
\begin{equation}\label{I2}
\begin{aligned}
\mathcal{E}_{t_{\epsilon}}(\epsilon,a_{\epsilon})= \int_{\epsilon}^{a_{\epsilon}} r^{\theta} |u_{\epsilon}|^{p^{*}} \left( |\ln{(\tau+ t_{\epsilon}|u_{\epsilon}|)|^{r^{\beta}} - 1}\right)\, dr\ge 0.
    \end{aligned}
    \end{equation}
By using \eqref{relations-paramenters-two}, we have
\begin{equation}\label{atob}
\begin{aligned}
  0& \le   \int_{a_{\epsilon}}^{b_{\epsilon}} r^{\theta} \dfrac{\epsilon^{sp^{*}}}{\left( \epsilon^{n} + r^{n}\right)^{\frac{p^*}{m}}} \, dr   \le  \epsilon^{sp^*}\int_{a_{\epsilon}}^{b_{\epsilon}} r^{\theta-\frac{np^*}{m}}\, dr   \\
  &=\frac{p-1}{\theta+1}\epsilon^{\frac{\theta+1}{p-1}}\left(a^{-\frac{\theta+1}{p-1}}_{\epsilon}-b^{-\frac{\theta+1}{p-1}}_{\epsilon}\right)\\
  &=\epsilon^{\frac{\theta+1}{p}}\frac{p-1}{\theta+1}\left[\left(\frac{\epsilon^{\frac{1}{p}}}{a_{\epsilon}}\right)^{\frac{\theta+1}{p-1}}-\left(\frac{\epsilon^{\frac{1}{p}}}{b_{\epsilon}}\right)^{\frac{\theta+1}{p-1}}\right]=O(\epsilon^{\frac{\theta+1}{p}}).
 \end{aligned}
\end{equation}
Since  $0\le u_{\epsilon}=\widehat{A}u^{*}_{\epsilon}$ on $(a_{\epsilon}, b_\epsilon)$, \eqref{atob} yields
\begin{equation}\label{u-3.14}
\begin{aligned}
0 &\leq \int_{a_{\epsilon}}^{b_{\epsilon}} r^{\theta}|u_{\epsilon}|^{p^{*}} \, dr \le  A^{p^*}
    \int_{a_{\epsilon}}^{b_{\epsilon}} r^{\theta}\dfrac{\epsilon^{sp^*}}{\left( \epsilon^{n} + r^{n}\right)^{\frac{p^*}{m}}} \, dr =O(\epsilon^{\frac{\theta+1}{p}}).
\end{aligned}
\end{equation}
So,  from \eqref{star-log}  and \eqref{u-3.14}
\begin{equation}\label{I3}
\begin{aligned}
 \mathcal{E}_{t_{\epsilon}}(a_{\epsilon},b_{\epsilon})&= \int_{a_{\epsilon}}^{b_{\epsilon}} r^{\theta} |u_{\epsilon}|^{p^{*}} \left( |\ln{(\tau+ t_{\epsilon}|u_{\epsilon}|)|^{r^{\beta}} - 1}\right)\, dr\\
 &=\int_{a_{\epsilon}}^{b_{\epsilon}} r^{\theta} |u_{\epsilon}|^{p^{*}}  |\ln(\tau+ t_{\epsilon}|u_{\epsilon}|)|^{r^{\beta}}\, dr -\int_{a_{\epsilon}}^{b_{\epsilon}} r^{\theta} |u_{\epsilon}|^{p^{*}}\, dr
 &=O(\epsilon^{\frac{\theta+1}{p}}).
    \end{aligned}
    \end{equation}
On the interval $(b_{\epsilon}, r_0)$ we also have $0\le u_{\epsilon}=\widehat{A}u^{*}_{\epsilon}$. Thus, \eqref{star-log} implies
\begin{equation}\label{I4}
\begin{aligned}
 \mathcal{E}_{t_{\epsilon}}(b_{\epsilon},r_0)= \int_{b_{\epsilon}}^{r_0} r^{\theta} |u_{\epsilon}|^{p^{*}} \left( |\ln{(\tau+ t_{\epsilon}|u_{\epsilon}|)|^{r^{\beta}} - 1}\right)\, dr\ge 0.
    \end{aligned}
    \end{equation}
 For any $r\in (r_0, 1)$, we have the estimate
 \begin{equation}\nonumber
 0\le t_{\epsilon}u_{\epsilon}(r)=t_{\epsilon}A \eta (r)\epsilon^{s}\left( \epsilon^{n} + r^{n}\right)^{-\frac{1}{m}}\le t_{\epsilon}A\epsilon^{s}\left( \epsilon^{n} + r^{n}_0\right)^{-\frac{1}{m}}\le 2t_0A \epsilon^{s}r^{-\frac{n}{m}}_0,
 \end{equation}
which, together with the assumption $0<\tau<1/e$,  implies  that $\tau+t_{\epsilon}|u_{\epsilon}|\le 1/e$ on $(r_0, 1)$, if $\epsilon>0$ is small enough. Thus, $|\ln(\tau+t_{\epsilon}|u_{\epsilon}|)|\ge 1$ on $(r_0, 1)$. It follows that 
\begin{equation}\label{I5}
\begin{aligned}
 \mathcal{E}_{t_{\epsilon}}(r_0,1)= \int_{r_0}^{1} r^{\theta} |u_{\epsilon}|^{p^{*}} \left( |\ln{(\tau+ t_{\epsilon}|u_{\epsilon}|)|^{r^{\beta}} - 1}\right)\, dr\ge 0.
    \end{aligned}
    \end{equation}
By combining  \eqref{3.15}, \eqref{split-six}, \eqref{I2}, \eqref{I3}, \eqref{I4} and \eqref{I5},  we obtain
\begin{equation}\label{3.16}
\begin{aligned}
\mathcal{E}_{t_{\epsilon}}(0,1)
&\ge C \epsilon^{\beta} \ln{|\ln{\epsilon}|} + O(\epsilon^{\frac{\theta+1}{p}}).
\end{aligned}
\end{equation}
Next, we treat the sub-case $1/e\leq \tau < e$. Here,  there holds
\begin{equation}\label{case2<1}
\big| \ln{(\tau + t_{\epsilon}\widehat{A}u^{*}_{\epsilon})}\big| \leq 1 \quad \text{if and only if} \quad a_{\epsilon} < r< 1
\end{equation}
where $a_{\epsilon}$ is defined in \eqref{3.11-a}. In this case, by choosing $\epsilon>0$ small enough such that $0<\epsilon<a_{\epsilon}<r_0<1 $, we consider the split
\begin{equation}\label{split-2subcaso}
\mathcal{E}_{t_{\epsilon}}(0,1)=\mathcal{E}_{t_{\epsilon}}(0,a_{\epsilon})+\mathcal{E}_{t_{\epsilon}}(a_{\epsilon},1).
\end{equation} 
Since   $u_{\epsilon}=\widehat{A}u^{*}_{\epsilon}$ on $(0, a_{\epsilon})$, from \eqref{case2<1}  and  \eqref{3.15} we can write
\begin{equation}\label{case2-I1}
\begin{aligned}
\mathcal{E}_{t_{\epsilon}}(0,a_{\epsilon})=\mathcal{E}_{t_{\epsilon}}(0,\epsilon)+\mathcal{E}_{t_{\epsilon}}(\epsilon,a_{\epsilon})\ge \mathcal{E}_{t_{\epsilon}}(0,\epsilon)
\ge C \epsilon^{\beta} \ln {|\ln{\epsilon}|},
\end{aligned}
\end{equation}
for suitable $C>0$. Analogous to \eqref{atob}, we have 
\begin{equation}\label{atob-2}
\begin{aligned}
  0& \le   \int_{a_{\epsilon}}^{1} r^{\theta} \dfrac{\epsilon^{sp^{*}}}{\left( \epsilon^{n} + r^{n}\right)^{\frac{p^*}{m}}} \, dr   \le  \epsilon^{sp^*}\int_{a_{\epsilon}}^{1} r^{\theta-\frac{np^*}{m}}\, dr   \\
  &=\frac{p-1}{\theta+1}\epsilon^{\frac{\theta+1}{p-1}}\left(a^{-\frac{\theta+1}{p-1}}_{\epsilon}-1\right)\\
  &=\epsilon^{\frac{\theta+1}{p}}\frac{p-1}{\theta+1}\left[\left(\frac{\epsilon^{\frac{1}{p}}}{a_{\epsilon}}\right)^{\frac{\theta+1}{p-1}}-\epsilon^{\frac{\theta+1}{p^2-p}}\right]=O(\epsilon^{\frac{\theta+1}{p}}).
 \end{aligned}
\end{equation}
From \eqref{atob-2} we have 
\begin{equation}\label{u2-3.14}
\begin{aligned}
0 &\le \int_{a_{\epsilon}}^{1} r^{\theta}|u_{\epsilon}|^{p^{*}} \, dr \le  A^{p^*}
    \int_{a_{\epsilon}}^{1} r^{\theta}\dfrac{\epsilon^{sp^*}}{\left( \epsilon^{n} + r^{n}\right)^{\frac{p^*}{m}}} \, dr =O(\epsilon^{\frac{\theta+1}{p}}).
\end{aligned}
\end{equation}
Now, for any $r\in (a_{\epsilon}, 1)$,   we have 
\begin{equation}\label{Ea1}
\begin{aligned}
 0\le  t_{\epsilon} u_{\epsilon}(r) \le\Bar{d}\epsilon^{s}\left( \epsilon^{n} + a^n_{\epsilon}\right)^{-\frac{1}{m}} &= \Bar{d}\epsilon^{s-\frac{n}{mp}}\left(\frac{\epsilon^{\frac{1}{p}}}{a_{\epsilon}}\right)^{\frac{n}{m}}\left(1+ \Big(\frac{\epsilon}{a_{\epsilon}}\Big)^{n}\right)^{-\frac{1}{m}}\\
 &=\Bar{d}\left(\frac{\epsilon^{\frac{1}{p}}}{a_{\epsilon}}\right)^{\frac{n}{m}}\left(1+ \Big(\frac{\epsilon}{a_{\epsilon}}\Big)^{n}\right)^{-\frac{1}{m}}.
 \end{aligned}
\end{equation}
where $\Bar{d}=2t_0A$, if $\epsilon>0$ is small enough.  Since $a_{\epsilon}=O(\epsilon^{\frac{1}{p}})$, as $\epsilon \to 0$ we have that  $\ln\tau \le \ln(\tau+t_{\epsilon}u_{\epsilon}))\le s_{\tau}$ on $(a_{\epsilon},1)$ for some positive constant $s_{\tau}$ depending only on $\tau$.  Hence, $| \ln(\tau+t_{\epsilon}u_{\epsilon}))|^{r^{\beta}}\le c$ on $(a_{\epsilon},1)$, for some $c>0$ depending only on $\beta$ and $\tau$.  Thus, from \eqref{u2-3.14} we have
\begin{equation}\label{2-3.14}
\mathcal{E}_{t_{\epsilon}}(a_{\epsilon}, 1)=\int_{a_{\epsilon}}^{1} r^{\theta}|u_{\epsilon}|^{p^{*}}   \left| \ln{(\tau + t_{\epsilon}| u_{\epsilon}|)}\right|^{r^{\beta}} \, dr  - \int_{a_{\epsilon}}^{1} r^{\theta}|u_{\epsilon}|^{p^{*}} \, dr= O(\epsilon^{\frac{\theta+1}{p}}).
\end{equation}
From \eqref{split-2subcaso}, \eqref{case2-I1} and  \eqref{2-3.14}, it follows that 
\begin{equation}\label{3.19}
\begin{aligned}
 \mathcal{E}_{t_{\epsilon}}(0,1)\ge   C \epsilon^{\beta} \ln{|\ln{\epsilon}|} + O(\epsilon^{\frac{\theta+1}{p}}).
\end{aligned}
\end{equation}
By \eqref{3.16} and \eqref{3.19}, we get \eqref{Lemma-estimative} for the case $0< \tau < e$.
\paragraph{\textbf{Case~2}} $e \leq \tau < \infty$.\\
 In this case, we have
$\left|\ln(\tau+ t_{\epsilon}u_{\epsilon})\right| \ge  1 \;\; \mbox{on} \;\;  (0,1).$
Hence,  from \eqref{3.15} we have
\begin{equation}\nonumber
\begin{aligned}
\mathcal{E}_{t_{\epsilon}}(0,1)\ge \mathcal{E}_{t_{\epsilon}}(0,\epsilon)\ge C \epsilon^{\beta} \ln{|\ln{\epsilon}|}
\end{aligned}
\end{equation}
for $\epsilon>0$ small enough. This proves \eqref{Lemma-estimative} for $e\leq \tau < \infty$.
\end{proof}
Next, we will provide an upper estimate for $\mathcal{E}_{t_{\epsilon}}(0,1)$. Namely,
\begin{lemma}\label{lema20-B}  Let $\beta >0$ and  $(t_{\epsilon})$  as in Lemma~\ref{lema20}. Then, as $\epsilon\to 0$ we have
 \begin{enumerate}
 \item [$(a)$] If $0<\tau<e$, 
 \begin{equation}\label{Caso A -final}
 \mathcal{E}_{t_{\epsilon}}(0,1)\le  \left\{\begin{aligned}
    &O(\epsilon^{\beta}|\ln\epsilon|\ln(|\ln {\epsilon}|))+O(\epsilon^{\frac{\theta+1}{p}})\;&\mbox{if}&\; \beta=\frac{\theta+1}{p-1}\\
    &O(\epsilon^{\frac{\theta+1+\beta}{p}}\ln(|\ln {\epsilon}|))+O(\epsilon^{\beta}\ln(|\ln {\epsilon}|)) +O(\epsilon^{\frac{\theta+1}{p-1}}) \;&\mbox{if}&\; \beta\not=\frac{\theta+1}{p-1}.
\end{aligned}\right.
\end{equation} 
 \item [$(b)$]  If  $e \leq \tau < \infty$,
 \begin{equation}\label{Caso B-final}
\mathcal{E}_{t_{\epsilon}}(0,1)\le \left\{\begin{aligned}
 &O(\epsilon^{\beta}|\ln\epsilon|\ln(|\ln\epsilon|)) &\mbox{if}&\;\; \beta=\frac{\theta+1}{p-1}\\
&O(\epsilon^{\frac{\theta+\beta+1}{p}}\ln(|\ln\epsilon|))+O(\epsilon^{\beta}\ln(|\ln\epsilon|))+O(\epsilon^{\frac{\theta+1}{p-1}})&\mbox{if}& \;\;\beta\not=\frac{\theta+1}{p-1}.
\end{aligned}\right.
\end{equation}
 \end{enumerate}
\end{lemma}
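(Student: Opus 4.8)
The plan is to estimate $\mathcal E_{t_\epsilon}(0,1)$ from above by following the same partition of $[0,1]$ used in Lemma~\ref{lema20}, but reversing the direction of every estimate. First I would note that on the set where $|\ln(\tau+t_\epsilon|u_\epsilon|)|\le 1$ the integrand $r^\theta|u_\epsilon|^{p^*}\big(|\ln(\tau+t_\epsilon|u_\epsilon|)|^{r^\beta}-1\big)$ is nonpositive (positive exponent, base $\le1$), so that set — which by Lemma~\ref{lema20} is the annulus $(a_\epsilon,b_\epsilon)$ when $0<\tau<1/e$ and is $(a_\epsilon,1)$ when $1/e\le\tau<e$ — may be discarded; for $e\le\tau<\infty$ no such set occurs. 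It therefore suffices to bound the contributions of the ``core'' $[0,\epsilon]$, the ``transition'' $[\epsilon,a_\epsilon]$, the ``bulk'' $[a_\epsilon,r_0]$ (or $[b_\epsilon,r_0]$ when $\tau<1/e$) and the ``tail'' $[r_0,1]$, where $a_\epsilon,b_\epsilon=O(\epsilon^{1/p})$ by the remark after \eqref{b-est} (for $e\le\tau<\infty$ one fixes an analogous threshold $a_\epsilon=O(\epsilon^{1/p})$).

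The only analytic input is the convexity bound $e^x-1\le C_M x$ for $0\le x\le M$, applied with $x=r^\beta\ln|\ln(\tau+t_\epsilon|u_\epsilon|)|$. On $[0,\epsilon]$ the cutoff is $\equiv1$, $|u_\epsilon|$ is of size $\epsilon^{-(\theta+1)/p^*}$, hence $\ln(\tau+t_\epsilon|u_\epsilon|)$ is of size $|\ln\epsilon|$ and $\ln|\ln(\tau+t_\epsilon|u_\epsilon|)|=O(\ln|\ln\epsilon|)$; since $\epsilon^\beta\ln|\ln\epsilon|\to0$ the exponent is bounded and $|\ln(\tau+t_\epsilon|u_\epsilon|)|^{r^\beta}-1\le Cr^\beta\ln|\ln\epsilon|$, which combined with $\int_0^\epsilon r^{\theta+\beta}|u_\epsilon|^{p^*}\,dr=O(\epsilon^\beta)$ (a direct consequence of the identities \eqref{relations-paramenters-two}) gives the block $O(\epsilon^\beta\ln|\ln\epsilon|)$. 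On $[\epsilon,a_\epsilon]$ one still has $u_\epsilon=\widehat A u_\epsilon^*$ and $\ln|\ln(\tau+t_\epsilon|u_\epsilon|)|=O(\ln|\ln\epsilon|)$, and the integral reduces to $C\epsilon^{sp^*}\ln|\ln\epsilon|\int_\epsilon^{a_\epsilon}r^{\theta+\beta-np^*/m}\,dr$; evaluating this with $\theta-\tfrac{np^*}{m}+1=-\tfrac{\theta+1}{p-1}$ and $sp^*=\tfrac{\theta+1}{p-1}$ gives $O(\epsilon^\beta\ln|\ln\epsilon|)$ when $\beta<\tfrac{\theta+1}{p-1}$, $O(\epsilon^{(\theta+\beta+1)/p}\ln|\ln\epsilon|)$ when $\beta>\tfrac{\theta+1}{p-1}$, and — because then $\int_\epsilon^{a_\epsilon}r^{-1}\,dr=O(|\ln\epsilon|)$ — the borderline term $O(\epsilon^\beta|\ln\epsilon|\ln|\ln\epsilon|)$ when $\beta=\tfrac{\theta+1}{p-1}$.

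On the bulk interval $[a_\epsilon,r_0]$ (and $[b_\epsilon,r_0]$ when $\tau<1/e$) the function $u_\epsilon=\widehat A u_\epsilon^*$ is bounded, so on the part where $|\ln(\tau+t_\epsilon|u_\epsilon|)|\ge1$ — the rest contributes nonpositively — the quantity $\ln|\ln(\tau+t_\epsilon|u_\epsilon|)|$ is bounded by a constant depending only on $\tau$; the convexity bound then yields $|\ln(\tau+t_\epsilon|u_\epsilon|)|^{r^\beta}-1\le Cr^\beta$, and the block is controlled by $C\epsilon^{sp^*}\int_{a_\epsilon}^{r_0}r^{\theta+\beta-np^*/m}\,dr$. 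Since $a_\epsilon,b_\epsilon=O(\epsilon^{1/p})$ this is $O(\epsilon^{(\theta+\beta+1)/p})$ for $\beta<\tfrac{\theta+1}{p-1}$, $O(\epsilon^\beta|\ln\epsilon|)$ for $\beta=\tfrac{\theta+1}{p-1}$, and — here the integral is bounded by the constant $\tfrac{p-1}{\theta+1}r_0^{\beta-(\theta+1)/(p-1)}$, which is where the term $O(\epsilon^{(\theta+1)/(p-1)})$ of the statement is born — $O(\epsilon^{(\theta+1)/(p-1)})$ for $\beta>\tfrac{\theta+1}{p-1}$. Finally, on $[r_0,1]$ one has $0\le u_\epsilon\le C\epsilon^s\to0$, so $\ln|\ln(\tau+t_\epsilon|u_\epsilon|)|=O(1)$ on the part where $|\ln(\tau+t_\epsilon|u_\epsilon|)|\ge1$, hence $|\ln(\tau+t_\epsilon|u_\epsilon|)|^{r^\beta}-1\le C$, while $\int_{r_0}^1 r^\theta|u_\epsilon|^{p^*}\,dr=O(\epsilon^{sp^*})=O(\epsilon^{(\theta+1)/(p-1)})$ because the integrand of $\|\widehat A u_\epsilon^*\|_{L^{p^*}_\theta}^{p^*}$ is bounded on $[r_0,1]$. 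Summing the four (or five) blocks and keeping the dominant term in each of the regimes determined by the sign of $\beta-\tfrac{\theta+1}{p-1}$ yields \eqref{Caso A -final} for $0<\tau<e$ and \eqref{Caso B-final} for $e\le\tau<\infty$.

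I expect the main obstacle to be the bookkeeping rather than any single hard estimate: the convexity inequality must be invoked only where $x=r^\beta\ln|\ln(\tau+t_\epsilon|u_\epsilon|)|$ is genuinely bounded — near $r=\epsilon$ it is bounded but not small, which is exactly why the factor $\ln|\ln\epsilon|$ cannot be dropped — and one must keep careful track of which endpoint dominates $\int r^{\theta+\beta-np^*/m}\,dr$ on $[\epsilon,a_\epsilon]$ and on $[a_\epsilon,r_0]$ as $\beta$ crosses $\tfrac{\theta+1}{p-1}$, since it is precisely this crossover that switches the relevant scaling among $\epsilon^\beta$, $\epsilon^{(\theta+\beta+1)/p}$ and $\epsilon^{(\theta+1)/(p-1)}$.
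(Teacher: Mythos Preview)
Your proposal is correct and follows essentially the same route as the paper's proof: the same partition around the thresholds $a_\epsilon,b_\epsilon=O(\epsilon^{1/p})$ (the paper takes $\epsilon^{1/p}$ itself for $\tau\ge e$), the same pointwise bound $|\ln(\tau+t_\epsilon|u_\epsilon|)|\le\xi|\ln\epsilon|$ near the origin, the same use of $e^x-1\le Cx$ on the set where $x=r^\beta\ln|\ln(\tau+t_\epsilon|u_\epsilon|)|$ is bounded, and the same reduction to $\epsilon^{sp^*}\int r^{\theta+\beta-np^*/m}\,dr$ with the crossover at $\beta=(\theta+1)/(p-1)$.

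The only organizational differences are that you explicitly discard the region where $|\ln(\tau+t_\epsilon|u_\epsilon|)|\le 1$ as giving a nonpositive contribution, whereas the paper instead recycles the two-sided bound $\mathcal E_{t_\epsilon}(a_\epsilon,b_\epsilon)=O(\epsilon^{(\theta+1)/p})$ (resp.\ $\mathcal E_{t_\epsilon}(a_\epsilon,1)=O(\epsilon^{(\theta+1)/p})$) already obtained in Lemma~\ref{lema20}; and you split $[0,a_\epsilon]$ into $[0,\epsilon]\cup[\epsilon,a_\epsilon]$ and $[b_\epsilon,1]$ into bulk and tail, while the paper treats each of these in one stroke. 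Your handling in fact yields a slightly cleaner upper bound in case $(a)$ (the $O(\epsilon^{(\theta+1)/p})$ term becomes unnecessary once the nonpositive region is simply dropped), which of course still implies the stated estimate.
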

\begin{proof}
Firstly, note that 
\begin{equation}\label{uppper-ue}
 0\le  t_{\epsilon} u_{\epsilon}(r) =  t_{\epsilon} A \eta (r)\epsilon^{s}\left( \epsilon^{n} + r^{n}\right)^{-\frac{1}{m}}\le \Bar{d}\epsilon^{s-\frac{n}{m}},  \;\; \mbox{for all}\;\; r\in (0,1).
\end{equation}
where $\Bar{d}=2t_0A$, if $\epsilon>0$ is small enough. 
\paragraph{\textbf{Case~A}} $0<\tau < e$.\\
Let $a_{\epsilon}$ and $b_{\epsilon}$ be given by  \eqref{3.11-a} and \eqref{b-est}, for $\epsilon>0$ small enough such that \eqref{eabr} holds. By definition, for all $r\in (0, a_{\epsilon})$, we have $\ln(\tau+t_{\epsilon}u_{\epsilon})\ge 1$. Hence, by using \eqref{uppper-ue} we obtain
\begin{equation}\label{log-logX}
|\ln(\tau+t_{\epsilon}u_{\epsilon})|=\ln(\tau+t_{\epsilon}u_{\epsilon})\le \ln(\tau+\Bar{d}\epsilon^{s-\frac{n}{m}})=|\ln\epsilon|\Big(\frac{n}{m}-s+o_{\epsilon}(1)\Big)\le \xi|\ln\epsilon|\;\; \mbox{on}\;\; (0,a_{\epsilon})
\end{equation}
for some $\xi>0$.  Thus,  $0\le r^{\beta}\ln (|\ln \epsilon|)\le a_{\epsilon}^{\beta}\ln (|\ln \epsilon|)\to 0$ as $\epsilon\to 0$, for  $r\in (0, a_{\epsilon})$. Hence, 
by using that $(e^x-1)/x\to 1$ as $x\to 0$, we can write
 \begin{equation}\label{L1<}
 \begin{aligned}
& \mathcal{E}_{t_{\epsilon}}(0,a_{\epsilon})= \int_{0}^{a_{\epsilon}} r^{\theta} |u_{\epsilon}|^{p^{*}} \left(e^{r^{\beta}\ln|\ln(\tau + t_{\epsilon} |u_{\epsilon}|)|} -1\right)\, dr\\
    &\le  \int_{0}^{a_{\epsilon}} r^{\theta} |u_{\epsilon}|^{p^{*}} \left(e^{r^{\beta}\ln(\xi|\ln \epsilon|)} -1\right)\, dr\\
   & \leq c_1 \ln(\xi|\ln {\epsilon}|)\int_{0}^{a_{\epsilon}} r^{\theta+ \beta}|u_{\epsilon}|^{p^*}\, dr \\
    &= c_1A^{p^*} \ln(\xi|\ln {\epsilon}|) \int_{0}^{a_{\epsilon}} r^{\theta+ \beta}\dfrac{\epsilon^{sp^{*}}}{\left( \epsilon^{n} + r^{n}\right)^{\frac{p^*}{m}}}\, dr \\
&\leq  c_1A^{p^*} \ln(\xi|\ln {\epsilon}|) \Big[ \epsilon^{(s-\frac{n}{m})p^*} \int_{0}^{\epsilon} r^{\theta+ \beta} \, dr + \epsilon^{sp^*} \int_{\epsilon}^{a_{\epsilon}} r^{\theta+ \beta - \frac{np^{*}}{m}}\, dr\Big]\\
&=c_1A^{p^*} \epsilon^{\beta}\ln(\xi|\ln {\epsilon}|) \Big[\frac{1}{\theta+\beta+1} + \epsilon^{\frac{\theta+1}{p-1}-\beta} \int_{\epsilon}^{a_{\epsilon}} r^{\beta -1 - \frac{\theta+1}{p-1}}\, dr\Big].
\end{aligned}
\end{equation}
Since $a_{\epsilon}=O(\epsilon^{1/p})$ we  also can write 
\begin{equation}\label{I1-B}
\mathcal{E}_{t_{\epsilon}}(0,a_{\epsilon})\le  \left\{\begin{aligned}
    &O(\epsilon^{\beta}|\ln\epsilon|\ln(|\ln {\epsilon}|)) \; & \mbox{if}&\; \beta=\frac{\theta+1}{p-1}\\
    &O(\epsilon^{\beta}\ln(|\ln {\epsilon}|)) +O(\epsilon^{\frac{\theta+1+\beta}{p}}\ln(|\ln {\epsilon}|)) \;&\mbox{if}&\;\beta\not=\frac{\theta+1}{p-1}.
\end{aligned}\right.
\end{equation} 
\paragraph{\textbf{Sub-case}} $0<\tau<1/e$.\\
For any $r\in (b_{\epsilon}, 1)$,  from the same argument in \eqref{Ea1},  we have 
\begin{equation}\label{uppper-subcaso}
\begin{aligned}
 0\le  t_{\epsilon} u_{\epsilon}(r) \le \Bar{d}\left(\frac{\epsilon^{\frac{1}{p}}}{b_{\epsilon}}\right)^{\frac{n}{m}}\left(1+ \Big(\frac{\epsilon}{b_{\epsilon}}\Big)^{n}\right)^{-\frac{1}{m}}
 \end{aligned}
\end{equation}
where $\Bar{d}=2t_0A$, if $\epsilon>0$ is small enough.  Since $b_{\epsilon}=O(\epsilon^{\frac{1}{p}})$, as $\epsilon \to 0$ we have that  $\ln\tau \le \ln(\tau+t_{\epsilon}u_{\epsilon}))\le s_{\tau}$ on $(b_{\epsilon},1)$ for some positive constant $s_{\tau}$ depending only on $\tau$.  Hence, $|\ln(\tau+t_{\epsilon}u_{\epsilon}))|\le c_{\tau}$ on $(b_{\epsilon},1)$, for some $c_{\tau}>1$.  So, $ \ln(|\ln(\tau+t_{\epsilon}u_{\epsilon}))|)\le \ln c_{\tau}$
 on $(b_{\epsilon},1)$.  Consequently, since $e^{x}-1=e^{\vartheta x} x$ for  $x\in\mathbb{R}$ and for some  $0<\vartheta=\vartheta_{x}<1$, we can write
\begin{equation}\label{I4-B-fina}
\begin{aligned}
 \mathcal{E}_{t_{\epsilon}}(b_{\epsilon},1)&= \int_{b_{\epsilon}}^{1} r^{\theta} |u_{\epsilon}|^{p^{*}} \left(e^{r^{\beta}\ln|\ln{(\tau + t_{\epsilon} u_{\epsilon}|})} -1\right)\, dr\\
 &\le \int_{b_{\epsilon}}^{1} r^{\theta} |u_{\epsilon}|^{p^{*}} \left(e^{r^{\beta}\ln c_{\tau}} -1\right)\, dr\\
 &\le C_{\tau}\int_{b_{\epsilon}}^{1} r^{\beta+\theta} |u_{\epsilon}|^{p^{*}}\, dr.\\
 &\le  C_{\tau}A^{p^*}\epsilon^{sp^*}\int_{b_{\epsilon}}^{1} r^{\theta+\beta-\frac{np^*}{m}}\,dr.
 \end{aligned}
\end{equation}
and, recalling that $b_{\epsilon}=O(\epsilon^{\frac{1}{p}})$  
\begin{equation}\label{L2-parte 22B-final}
\epsilon^{sp^*}\int_{b_{\epsilon}}^{1} r^{\theta+\beta-\frac{np^*}{m}}\,dr=\left\{\begin{aligned}
 &O(\epsilon^{\beta}|\ln\epsilon|) &\mbox{if}&\;\; \beta=\frac{\theta+1}{p-1}\\
&O(\epsilon^{\frac{\theta+1}{p-1}})+O(\epsilon^{\frac{\theta+\beta+1}{p}})&\mbox{if}& \;\;\beta\not=\frac{\theta+1}{p-1}.
\end{aligned}\right.
\end{equation}
Hence,  from \eqref{I4-B-fina} and \eqref{L2-parte 22B-final}
\begin{equation}\label{L3-B-k}
\mathcal{E}_{t_{\epsilon}}(b_{\epsilon},1)\le \left\{\begin{aligned}
 &O(\epsilon^{\beta}|\ln\epsilon|) &\mbox{if}&\;\; \beta=\frac{\theta+1}{p-1}\\
&O(\epsilon^{\frac{\theta+1}{p-1}})+O(\epsilon^{\frac{\theta+\beta+1}{p}})&\mbox{if}& \;\;\beta\not=\frac{\theta+1}{p-1}.
\end{aligned}\right.
\end{equation} 
From \eqref{I3}, \eqref{I1-B} and  \eqref{L3-B-k} we get the result for $0<\tau<1/e$.
\paragraph{\textbf{Sub-case}} $1/e\le \tau<e$.\\ 
Here, the result follows directly from  \eqref{2-3.14} and \eqref{I1-B}.  
\paragraph{\textbf{Case~B}} $e\le \tau< \infty$.\\
We have $\ln{(\tau + t_{\epsilon}u_{\epsilon})} \geq 1$ in $(0,1)$.  As in \eqref{log-logX} we can write 
\begin{equation}\label{log-logXY}
|\ln(\tau+t_{\epsilon}u_{\epsilon})| \le \xi |\ln\epsilon|\;\; \mbox{on}\;\; (0,1).
\end{equation}
We consider the partition
\begin{equation}
0<\epsilon<\epsilon^{\frac{1}{p}}<r_0<1.
\end{equation}
Note that for $r\in (0, \epsilon^{\frac{1}{p}})$, we have $0\le r^{\beta}\ln (\xi|\ln \epsilon|)\le \epsilon^{\frac{\beta}{p}}\ln (\xi|\ln \epsilon|)\to 0$ as $\epsilon\to 0$. Thus,  we can proceed as in \eqref{L1<} to write 
 \begin{equation}\label{L1<Z}
 \begin{aligned}
 \mathcal{E}_{t_{\epsilon}}(0,\epsilon^{\frac{1}{p}}) &\le \int_{0}^{\epsilon^{\frac{1}{p}}} r^{\theta} |u_{\epsilon}|^{p^{*}} \left(e^{r^{\beta}\ln(\xi|\ln \epsilon|)} -1\right)\, dr\\
 & \le  c_1 \ln(\xi|\ln {\epsilon}|)\int_{0}^{{\epsilon}^{\frac{1}{p}}} r^{\theta+ \beta}|u_{\epsilon}|^{p^*}\, dr \\
   & =c_1 \ln(\xi|\ln {\epsilon}|)\Big[\int_{0}^{\epsilon} r^{\theta+ \beta}|u_{\epsilon}|^{p^*}\, dr + \int_{\epsilon}^{\epsilon^{\frac{1}{p}}} r^{\theta+ \beta}|u_{\epsilon}|^{p^*}\, dr\Big].
\end{aligned}
\end{equation}
We have
$$
\int_{0}^{\epsilon} r^{\theta+ \beta}|u_{\epsilon}|^{p^*}\, dr =A^{p^*}\int_{0}^{\epsilon}\frac{\epsilon^{sp^*}}{(\epsilon^n+r^n)^{\frac{p^*}{m}}} r^{\theta+ \beta}\,dr\le A^{p^*}\epsilon^{(s-\frac{n}{m})^{p^*}}\int_{0}^{\epsilon} r^{\theta+ \beta}\,dr=O(\epsilon^{\beta}).
$$
In addition,
\begin{equation}
\begin{aligned}
\int_{\epsilon}^{\epsilon^{\frac{1}{p}}} r^{\theta+ \beta}|u_{\epsilon}|^{p^*}\, dr &=A^{p^*}\int_{\epsilon}^{\epsilon^{\frac{1}{p}}}\frac{\epsilon^{sp^*}}{(\epsilon^n+r^n)^{\frac{p^*}{m}}} r^{\theta+ \beta}\,dr
\le A^{p^*}\epsilon^{sp^*}\int_{\epsilon}^{\epsilon^{\frac{1}{p}}} r^{\theta+ \beta-\frac{n}{m}p^*}\,dr\\
\end{aligned}
\end{equation}
and thus
\begin{equation}\nonumber
\int_{\epsilon}^{\epsilon^{\frac{1}{p}}} r^{\theta+ \beta}|u_{\epsilon}|^{p^*}\, dr\le \left\{\begin{aligned}
 &O(\epsilon^{\beta}|\ln\epsilon|) &\mbox{if}&\;\; \beta=\frac{\theta+1}{p-1}\\
&O(\epsilon^{\frac{\theta+\beta+1}{p}})&\mbox{if}& \;\;\beta\not=\frac{\theta+1}{p-1}.
\end{aligned}\right.
\end{equation}
From \eqref{L1<Z}, we have 
\begin{equation}\label{L2-parte 1}
\mathcal{E}_{t_{\epsilon}}(0,\epsilon^{\frac{1}{p}})\le \left\{\begin{aligned}
 &O(\epsilon^{\beta}\ln(|\ln\epsilon|))+O(\epsilon^{\beta}|\ln\epsilon|\ln(|\ln\epsilon|)) &\mbox{if}&\; \beta=\frac{\theta+1}{p-1}\\
&O(\epsilon^{\beta}\ln(|\ln\epsilon|))+O(\epsilon^{\frac{\theta+\beta+1}{p}}\ln(|\ln\epsilon|))&\mbox{if}& \;\beta\not=\frac{\theta+1}{p-1}.
\end{aligned}\right.
\end{equation}
In addition, for any $r\in (\epsilon^{1/p}, 1)$,  arguing as in \eqref{Ea1} and \eqref{2-3.14}, we can deduce that
 \begin{equation}\label{2-3.14-outro caso}
\mathcal{E}_{t_{\epsilon}}(\epsilon^{\frac{1}{p}}, 1)=\int_{\epsilon^{\frac{1}{p}}}^{1} r^{\theta}|u_{\epsilon}|^{p^{*}}   \left| \ln{(\tau + t_{\epsilon}| u_{\epsilon}|)}\right|^{r^{\beta}} \, dr  - \int_{\epsilon^{\frac{1}{p}}}^{1} r^{\theta}|u_{\epsilon}|^{p^{*}} \, dr= O(\epsilon^{\frac{\theta+1}{p}}).
\end{equation}
From \eqref{L2-parte 1} and \eqref{2-3.14-outro caso} we get \eqref{Caso B-final}.
\end{proof}
\begin{definition}
  We say  a sequence $(u_{j})$ in $X^{1,p}_{R}(\alpha_0,\alpha_1)$ is a normalized concentrating sequence at origin, NCS for short,  if 
  \begin{equation}\nonumber
  \|u_j\| =1,\; u_{j} \rightharpoonup 0\;\; \mbox{weakly in}\;\; X^{1,p}_{R}(\alpha_0,\alpha_1)\;\;\mbox{and}\;\; \lim_{j\to\infty}\int_{r_{0}}^{R} r^{\alpha_{1}} |u^{\prime}_{j}|^{p} \, dr=0,\; \forall\; r_0>0.
  \end{equation}
\end{definition}
Next, we will show that the best constant $\Sigma_{p}$ in \eqref{C3-0} is an upper bound for maximal concentrated level of the functional $J(u)=\int_{0}^{1}r^{\theta} |u|^{2^{*}} \left|\ln{(\tau + |u|)}\right|^{r^{\beta}}\, dr$, with $u\in X^{1,p}_{1}(\alpha_0,\alpha_1)$.
\begin{lemma}\label{lemma NCS} Set
    $\mathcal{M} = \left\{ (u_{j}) \subset X^{1,p}_{1}(\alpha_0,\alpha_1): \, (u_{j}) \,\,\text{is NCS}\right\}.$
Then there holds
\begin{equation}
    \sup_{ (u_{j}) \in \mathcal{M}}\Big\{\limsup_{j \to \infty} \int_{0}^{1}r^{\theta} |u_{j}|^{p^{*}} \left|\ln{(\tau + |u_{j}|)}\right|^{r^{\beta}}\, dr \Big\} \leq \Sigma_{p},
\end{equation}
for  $\tau>0$ and $\beta>0$.
\end{lemma}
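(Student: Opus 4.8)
The plan is to take an arbitrary NCS $(u_j)$ and show that the logarithmic term contributes nothing to the relevant integral in the limit, so that the concentrated level is governed purely by the Sobolev constant $\Sigma_p$. First I would localize: fix any $r_0>0$ and split
\[
\int_0^1 r^\theta |u_j|^{p^*}|\ln(\tau+|u_j|)|^{r^\beta}\,dr = \int_0^{r_0}(\cdots)\,dr + \int_{r_0}^1(\cdots)\,dr.
\]
On $(r_0,1)$ the NCS condition gives $\int_{r_0}^1 r^{\alpha_1}|u_j'|^p\,dr\to 0$; combined with the equivalence of $\|\cdot\|_{X_{r_0}}$-type quantities and the compact embedding $X^{1,p}_R(\alpha_0,\alpha_1)\hookrightarrow L^q_\theta$ for $q<p^*$ (together with $u_j\rightharpoonup 0$), one gets $u_j\to 0$ strongly in $L^{p^*}_\theta(r_0,1)$ and, since the functions stay bounded away from the singular point, $\int_{r_0}^1 r^\theta|u_j|^{p^*}|\ln(\tau+|u_j|)|^{r^\beta}\,dr\to 0$. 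So only the piece near the origin matters.

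Next I would control the logarithmic factor on $(0,r_0)$ using the pointwise bound from Lemma~\ref{C3-L1}, exactly as in the proof of Theorem~\ref{C3-T1}: for $\|u_j\|=1$ we have $|u_j(r)|\le \kappa r^{-(\alpha_1-p+1)/p}$, hence
\[
|\ln(\tau+|u_j(r)|)|^{r^\beta}\le \big(C|\ln(\tau + r^{-(\alpha_1-p+1)/p})|\big)^{r^\beta}=\exp\!\big(r^\beta\ln|C\ln(\tau+r^{-(\alpha_1-p+1)/p})|\big).
\]
Since $r\mapsto r^\beta\ln|C\ln(\tau+r^{-(\alpha_1-p+1)/p})|\to 0$ as $r\to 0^+$, for any $\delta>0$ we may pick $r_0$ small enough that this exponent is below $\delta$ on $(0,r_0)$, giving $|\ln(\tau+|u_j|)|^{r^\beta}\le e^{\delta}$ there. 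Therefore
\[
\int_0^{r_0} r^\theta |u_j|^{p^*}|\ln(\tau+|u_j|)|^{r^\beta}\,dr \le e^{\delta}\int_0^{r_0} r^\theta|u_j|^{p^*}\,dr \le e^{\delta}\int_0^1 r^\theta|u_j|^{p^*}\,dr \le e^{\delta}\,\Sigma_p,
\]
the last step by the definition \eqref{C3-0} of $\Sigma_p$ and $\|u_j\|=1$.

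Combining the two pieces, $\limsup_{j\to\infty}\int_0^1 r^\theta|u_j|^{p^*}|\ln(\tau+|u_j|)|^{r^\beta}\,dr\le e^{\delta}\Sigma_p$ for every $\delta>0$, hence $\le\Sigma_p$; taking the supremum over $\mathcal{M}$ finishes the proof. The main obstacle — or rather the point that needs the most care — is the claim that on $(r_0,1)$ the NCS tail converges to zero: one must argue that the restriction of $u_j$ to $(r_0,1)$ is bounded in a space embedding compactly into $L^{p^*}_\theta(r_0,1)$ (here the $p^*$ at scale $r_0$ is subcritical relative to the natural exponent, or one simply uses that on $(r_0,1)$ the weight is bounded and the problem is a standard compact-embedding situation away from the singularity), so that weak convergence to $0$ upgrades to strong $L^{p^*}$ convergence there, and then the continuous (bounded) logarithmic factor is harmless. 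Everything else is a repetition of the estimates already carried out in the proof of Theorem~\ref{C3-T1}.
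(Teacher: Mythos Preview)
Your proposal is correct and follows essentially the same approach as the paper: split at a small radius, use the pointwise bound from Lemma~\ref{C3-L1} to show the logarithmic factor is uniformly close to~$1$ near the origin, and use the NCS tail condition to kill the piece on $(r_0,1)$. Your execution is slightly leaner in both pieces---you bound the factor directly by $e^{\delta}$ rather than subtracting~$1$ and estimating the remainder as the paper does in \eqref{rho-closezero}--\eqref{neweq1}, and you argue the tail via strong $L^{p^*}$-convergence rather than the explicit $\kappa_j\to 0$ pointwise bound used in \eqref{3.24-}---but these are cosmetic variations of the same argument.
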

\begin{proof}
Let $(u_{j}) \in \mathcal{M}$.  It is sufficient to show that given $\epsilon>0$, there exist $\rho=\rho(\epsilon)\in (0,1)$ and  $j_0\in \mathbb{N}$ such that for any $j \geq j_{0}$
\begin{itemize}
    \item [(a)] $\displaystyle\int_{0}^{\rho}r^{\theta} |u_{j}|^{p^{*}} \left|\ln{(\tau + |u_{j}|)}\right|^{r^{\beta}}\, dr \leq \Sigma_{p} + \epsilon$;
    \item[(b)] $\displaystyle\int_{\rho}^{1}r^{\theta} |u_{j}|^{p^{*}} \left|\ln{(\tau + |u_{j}|)}\right|^{r^{\beta}}\, dr \leq \epsilon$.
\end{itemize}
In order to prove $(a)$,  we  first use the Lemma~\ref{C3-L1} (c.f \eqref{e13}) to obtain
\begin{equation}\label{log-logC}
\begin{aligned}
|\ln {(\tau +|u_{j}|)}|&=\Big|\ln\tau+ \ln \Big(1 +\frac{|u_j|}{\tau}\Big)\Big|\le |\ln\tau|+ \ln \Big(1 +\frac{|u_j|}{\tau}\Big)\\
&\le |\ln\tau|+\ln\Big(1+\frac{\kappa}{\tau}r^{-\frac{\alpha_1-p+1}{p}}\Big)\\
&=|\ln r|\Big(\frac{\alpha_1-p+1}{p}+\frac{\ln(\frac{\kappa}{\tau}+r^{\frac{\alpha_1-p+1}{p}})+|\ln\tau|}{|\ln r|}\Big).
\end{aligned}
\end{equation}
Thus, by choosing  $\eta=(\alpha_1+1)/p$ and  $\rho>0$ small enough, we can write  
\begin{equation}\label{loglog}
|\ln {(\tau +|u_{j}|)}|\le \eta|\ln r|, \;\; r\in (0, \rho).
\end{equation}
By using $(e^{x}-1)/x\to 1$ as $x\to 0^+$ and that the function $r\mapsto r^{\beta}\ln|\eta\ln r|$ is increasing near $0$, for some $\delta_1>0$ small and $0<\rho<\delta_1$,  \eqref{loglog} yields
\begin{equation}\label{rho-closezero}
\begin{aligned}
 \int_{0}^{\rho} r^{\theta} |u_{j}|^{p^{*}} \left( \left|\ln {(\tau +|u_{j}|)}\right|^{r^{\beta}} - 1\right)\, dr& =\int_{0}^{\rho} r^{\theta} |u_{j}|^{p^{*}} \left(e^{r^{\beta}\ln\,\left|\ln {(\tau +|u_{j}|)}\right|} - 1\right)\, dr\\
& \le \int_{0}^{\rho} r^{\theta} |u_{j}|^{p^{*}} \left(e^{r^{\beta}\ln\,\left|\eta\ln r\right|} - 1\right)\, dr\\
& \le C\int_{0}^{\rho} r^{\theta+\beta} |u_{j}|^{p^{*}}\ln|\eta\ln r| dr\\
&\le C\rho^{\beta}\ln|\eta\ln\rho|\int_{0}^{\rho} r^{\theta} |u_{j}|^{p^{*}}\,dr\\
& \le C\rho^{\beta}\ln|\eta\ln\rho|\Sigma_p.
\end{aligned}
\end{equation}
It follows that 
\begin{equation}\label{neweq1}
\begin{aligned}
       \int_{0}^{\rho} r^{\theta} |u_{j}|^{p^{*}} \left|\ln {(\tau +|u_{j}|)}\right|^{r^{\beta}}\, dr &=      \int_{0}^{\rho} r^{\theta} |u_{j}|^{p^{*}}\, dr+ \int_{0}^{\rho} r^{\theta} |u_{j}|^{p^{*}} \left( \left|\ln {(\tau +|u_{j}|)}\right|^{r^{\beta}} - 1\right)\, dr\\
       &\leq  \Sigma_{p} +  C \rho^{\beta} \ln{|\eta\ln{\rho}|} \Sigma_{p}\\
       &\leq \Sigma_{p}+ \epsilon,
       \end{aligned}
\end{equation}
for $\rho = \rho(\epsilon)\in (0,\delta_{1})$ sufficiently small.
To get $(b)$, for any $r \in (\rho(\epsilon), 1)$, we obtain
\begin{equation}\label{3.24-}
\begin{aligned}
    |u_{j}(r)| &\leq \int_{r}^{1} |u'_{j}(s)|\, ds = \int_{r}^{1} s^{\frac{\alpha_{1}}{p}} |u'_{j}(s)| s^{-\frac{\alpha_{1}}{p}}\, ds \\
    &\leq \left(\int_{\rho}^{1}s^{\alpha_{1}}|u'_{j}|^{p}\, ds\right)^{\frac{1}{p}} \left(\int_{r}^{1}s^{\frac{-\alpha_{1}}{p-1}}\, ds\right)^{\frac{p-1}{p}}\\
    &\leq \kappa_{j} r^{-\frac{\alpha_{1}-p+1}{p}},
    \end{aligned}
\end{equation}
where
$\kappa_{j} := C \left(\int_{\rho}^{1}s^{\alpha_{1}}|u'_{j}|^{p}\, ds\right)^{\frac{1}{p}},$
for some $C= C(\alpha_{1}, p)$. Since  $(u_{j})$ is NCS, we have $\kappa_{j} \to 0$, as $j \to \infty$. From \eqref{3.24-}, if $\tau\in (0,1)$ then 
\begin{equation}\nonumber
\begin{aligned}
 |\ln {(\tau +|u_{j}|)}|& \le |\ln\tau|+\ln\Big(1+\frac{\kappa_{j}}{\tau}r^{-\frac{\alpha_1-p+1}{p}}\Big)\\
& \le |\ln{\tau}| + \ln\left(1 +\frac{\kappa_{j}}{\tau}\rho^{-\frac{\alpha_{1}-p+1}{p}}\right)\leq C_1, \quad \text{for}\,\, r \in (\rho, 1)
\end{aligned}
\end{equation}
and, if $\tau\ge 1$ then
$$|\ln {(\tau +|u_{j}|)}| \leq  \ln{\left(\tau + \kappa_{j}\rho^{-\frac{\alpha_{1}-p+1}{p}}\right)}\leq C_2, \quad \text{for}\,\, r \in (\rho, 1)$$
for $j$ large enough, where $C_1$ and $C_2$ depend only on $\rho$ and $\tau$. For $C=\max\{1,C_1, C_2\}$ we obtain 
\begin{equation}\nonumber
\begin{aligned}
    \int_{\rho}^{1} r^{\theta} |u_{j}|^{p^{*}} \left|\ln{(\tau + |u_{j}|)}\right|^{r^{\beta}}\, dr &\leq C \int_{\rho}^{1} r^{\theta}(\kappa_{j} r^{-\frac{\alpha_{1}-p+1}{p}})^{p^{*}}\, dr \\
    &= C\kappa_{j}^{p^{*}} \int_{\rho}^{1} \frac{1}{r}\, dr < \epsilon,
    \end{aligned}
\end{equation}
for $j \in \mathbb{N}$ large enough.
 \end{proof}
  \begin{proof}[Proof of the Theorem \ref{T2}] 
By choosing  $\widehat{A} = \mathcal{S}^{-\frac{\theta+1}{(\theta-\alpha_{1}+p)p}}$ in \eqref{definition u},  from \eqref{S-E},  \eqref{3.6} and \eqref{3.7}, we have
\begin{equation}\label{3.24}
    \|u_{\epsilon}\|^{p} = 1 + O(\epsilon^{sp}) \quad \text{and}\quad    \|u_{\epsilon}\|^{p^*}_{L_{\theta}^{p^{*}}} = \Sigma_{p} + O(\epsilon^{sp^{*}}).
\end{equation}
Firstly, the Lemma \ref{lema20} with $t_{\epsilon}=1/(1+O(\epsilon^{sp}))$ and \eqref{3.24}   imply
\begin{equation}\label{3.25}
\begin{aligned}
\int_{0}^{1}r^{\theta} |u_{\epsilon}|^{p^{*}} \Big| & \ln{\Big(\tau +\frac{|u_{\epsilon}|}{1 + O(\epsilon^{sp})}\Big)}\Big|^{r^{\beta}}\, dr=\mathcal{E}_{t_{\epsilon}}(0,1)+ \|u_{\epsilon}\|^{p^*}_{L_{\theta}^{p^{*}}}\\
     & \ge      \left\{
\begin{aligned}
       &\Sigma_p+ O(\epsilon^{\frac{\theta+1}{p-1}}) +  C \epsilon^{\beta} \ln {|\ln {\epsilon}|} + O(\epsilon^{\frac{\theta+1}{p}}), &\;\; \mbox{if} \;\;&  0<\tau < e \\
     &  \Sigma_p+ O(\epsilon^{\frac{\theta+1}{p-1}})  +  C \epsilon^{\beta} \ln {|\ln {\epsilon}|}, &\;\; \mbox{if}\; \;&   e \leq \tau < \infty
\end{aligned}
\right. \\
\end{aligned}
\end{equation}
for some $C>0$. Hence, \eqref{3.24} and \eqref{3.25}  yield
\begin{equation}\label{F>quase}
\begin{aligned}
    \mathcal{F}_{ \tau, \beta, \theta} 
    &\ge \int_{0}^{1}r^{\theta} \Big(\frac{|u_{\epsilon}|}{\|u_{\epsilon}\|}\Big)^{p^{*}} \Big| \ln{\Big(\tau+\frac{|u_{\epsilon}|}{\|u_{\epsilon}\|}\Big)}\Big|^{r^{\beta}}\, dr \\
   &= (1+O(\epsilon^{sp}))\int_{0}^{1}r^{\theta} |u_{\epsilon}|^{p^{*}} \Big| \ln{\Big(\tau+\frac{|u_{\epsilon}|}{1 + O(\epsilon^{sp})}\Big)}\Big|^{r^{\beta}}\, dr \\
&\ge      \left\{
\begin{aligned}
       &\Sigma_p+ O(\epsilon^{\frac{\theta+1}{p-1}}) +  C \epsilon^{\beta} \ln {|\ln {\epsilon}|} + O(\epsilon^{\frac{\theta+1}{p}})+O(\epsilon^{sp}), &\;\; \mbox{if} \;\;&  0<\tau < e \\
     &  \Sigma_p+ O(\epsilon^{\frac{\theta+1}{p-1}})  +  C \epsilon^{\beta} \ln {|\ln {\epsilon}|}+O(\epsilon^{sp}), &\;\; \mbox{if}\; \;&   e \leq \tau < \infty
\end{aligned}
\right.
\end{aligned}
\end{equation}
if $\epsilon>0$ is small enough, for all $\tau>0$ and $\beta>0$. 

\paragraph{$(i)$}  Letting $\epsilon \to 0$ in \eqref{F>quase}, we get $\mathcal{F}_{ \tau, \beta, \theta} \geq \Sigma_{p}$ for all $\tau>0$ and $\beta>0$. 

\paragraph{$(ii)$}  From \eqref{F>quase}, we can write 
 \begin{equation}\label{IIF>quase}
\begin{aligned}
    \mathcal{F}_{ \tau, \beta, \theta} &\ge      \left\{
\begin{aligned}
       &\Sigma_p+ \epsilon^{\beta} \ln {|\ln {\epsilon}|} \Big[C+O\Big(\frac{\epsilon^{\frac{\theta+1}{p-1}-\beta}}{ \ln {|\ln {\epsilon}|} }\Big) + O\Big(\frac{\epsilon^{\frac{\theta+1}{p}-\beta}}{ \ln {|\ln {\epsilon}|} }\Big)+O\Big(\frac{\epsilon^{sp-\beta}}{\ln|\ln\epsilon|}\Big)\Big], &\; \mbox{if} \;&  0<\tau < e \\
     &  \Sigma_p+\epsilon^{\beta} \ln {|\ln {\epsilon}|}\Big[C+O\Big(\frac{\epsilon^{\frac{\theta+1}{p-1}-\beta}}{ \ln {|\ln {\epsilon}|} }\Big)+O\Big(\frac{\epsilon^{sp-\beta}}{\ln|\ln\epsilon|}\Big)\Big],  &\; \mbox{if}\; &   e \leq \tau <\infty.
\end{aligned}
\right.
\end{aligned}
\end{equation}
Taking into account the assumption  $0< \beta< \min \{(\theta+1)/p, sp\}$, \eqref{IIF>quase} yields $(ii)$.

\paragraph{$(iii)$}
In view of $(i)$, in order to get  $(iii)$,  it is enough to show that
\begin{equation}
\lim_{\beta \to \infty} \sup{\mathcal{F}_{\tau, \beta, \theta}}\le \Sigma_{p}.
\end{equation}
By contradiction,  suppose that there exists a sequence $(\beta_{j})$ such that
\begin{equation}\label{3.26}
    \lim_{j \to \infty} \beta_{j} = \infty \quad \text{and}\quad \Sigma_{p} < \lim_{j \to \infty} \mathcal{F}_{\tau, \beta_{j}, \theta}.
\end{equation}
Of course, we can assume  $\beta_{j}>1$ for any $j\in\mathbb{N}$. For each $j$, we can take $u_{j} \in X^{1,p}_{1}(\alpha_0,\alpha_1)$ such that
\begin{equation}\label{3.27-}
    \|u_{j}\|=1 \quad \text{and} \quad \mathcal{F}_{\tau, \beta_{j}, \theta} - \frac{1}{j} \leq \int_{0}^{1} r^{\theta} |u_{j}|^{p^{*}} |\ln{(\tau + |u_{j}|)|^{r^{\beta_{j}}}}\, dr.
\end{equation}
Passing to a subsequence if necessary, the compact embedding \eqref{eq10} implies  that there exists $u_{0} \in X^{1,p}_{1}(\alpha_0,\alpha_1)$ such that
\begin{equation}\label{up-compactness}
    u_{j} \rightharpoonup u_{0} \,\, \text{weakly in}\,\, X^{1, p}_{1}(\alpha_0,\alpha_1),\,\, u_{j} \to u_{0}\,\, \text{in}\,\, L_{\theta}^{q} \,\, \text{and}\,\, 
    u_{j} \to u_{0} \,\, \text{a. e. in}\,\, (0,1),
\end{equation}
for any $1<q <p^{*}$. 
Let us denote by $X_{1}^{1,p}([\rho,1))$ the space $X_{1}^{1,p}(\alpha_0,\alpha_1)$ on the interval $(\rho, 1]$ instead of $(0,1]$.  According to \cite[pag. 3356]{DCDS2019}, we have  the compact embedding 
$$X^{1, p}_{1}([\rho, 1))\hookrightarrow  L^{q}_{\theta}([\rho, 1))$$
for any $q \geq p$. From \eqref{up-compactness} and  Lemma~\ref{C3-L1}, we can apply the Lebesgue dominated convergence theorem to see that
\begin{eqnarray*}
    \lim_{j \to \infty} \int_{\rho}^{1} r^{\theta} |u_{j}|^{p^{*}} |\ln{(\tau+ |u_{j}|)}|^{r^{\beta_{j}}}\, dr  =  \int_{\rho}^{1} r^{\theta} |u_{0}|^{p^{*}}\, dr = \lim_{j \to \infty} \int_{\rho}^{1} r^{\theta} |u_{j}|^{p^{*}}\, dr,     
\end{eqnarray*}
for any fixed $\rho \in (0,1)$. Then, for $\tau > 0$ holds
\begin{equation}\label{3.27}
        \int_{\rho}^{1} r^{\theta} |u_{j}|^{p^{*}} |\ln{(\tau+ |u_{j}|)}|^{r^{\beta_{j}}}\, dr  = \int_{\rho}^{1} r^{\theta} |u_{j}|^{p^{*}}\, dr + o_{j}(1).
\end{equation}
Arguing as in \eqref{log-logC} we can write
\begin{equation}\nonumber
\begin{aligned}
|\ln {(\tau +|u_{j}|)}|&\le|\ln r|\Big(\frac{\alpha_1-p+1}{p}+\frac{\ln(\frac{\kappa}{\tau}+r^{\frac{\alpha_1-p+1}{p}})+|\ln\tau|}{|\ln r|}\Big).
\end{aligned}
\end{equation}
Hence, for $0<\rho<\delta_0$  small enough and by choosing $C_{0}=\max\{\frac{\alpha_1+1}{p}, 1\}$ we have 
\begin{equation}\nonumber
\begin{aligned}
|\ln {(\tau +|u_{j}|)}|&\le C_{0}|\ln r|,\;\; \mbox{for all}\;\; r\in (0,\rho).
\end{aligned}
\end{equation}
Since $\beta_j>1$ and $C_0\ge 1$ we  also can write 
\begin{equation}\nonumber
\begin{aligned}
|\ln {(\tau +|u_{j}|)}|^{r^{\beta_j}}&\le \big(C_{0}|\ln r|\big)^{r^{\beta_j}}\le \big(C_{0}|\ln r|\big)^{r}, \;\; \mbox{for all}\;\; r\in (0,\rho).
\end{aligned}
\end{equation}
Since $\big(C_{0}|\ln r|\big)^{r}\to 1$ as $r\to 0$, for any $\epsilon>0$ we have 
\begin{equation*}
    |\ln {(\tau +|u_{j}|)}|^{r^{\beta_{j}}} -1 \le  \big(C_{0}|\ln r|\big)^{r} -1< \epsilon, 
\end{equation*}
for $r$ near $0$. Then, choosing $\rho>0$ small enough we can deduce that
\begin{equation}\label{3.32-}
    \int_{0}^{\rho} r^{\theta} |u_{j}|^{p^{*}} |\ln{(\tau+ |u_{j}|)}|^{r^{\beta_{j}}} \, dr \leq (1+ \epsilon) \int_{0}^{\rho} r^{\theta} |u_{j}|^{p^{*}}\, dr.
\end{equation}
Combining \eqref{3.27-}, \eqref{3.27} and \eqref{3.32-} we have
\begin{eqnarray*}
    \mathcal{F}_{\tau, \beta_{j}, \theta}  - \dfrac{1}{j} \leq (1+\epsilon)\Sigma_{p}+ o_{j}(1).
\end{eqnarray*}
Letting $j \to \infty$ and then $\epsilon \to 0$ we get a contradiction.
\end{proof}

\section{Attainability: Proof of Theorem \ref{C3-T3}}\label{sec4}
This section is dedicated to proving our attainability result. The main idea is based on the two-step argument due to Carleson-Chang, which first appeared in the seminal work \cite{CC}: Precisely,  under the conditions $\tau\ge 1$ and  $0<\beta< \min\{(\theta+1)/p, (\alpha_{1}-p+1)/(p-1)\}$, we will prove the following:
\begin{enumerate}
\item [I)] $ \mathcal{F}_{\tau, \beta, \theta}> \Sigma_p$,
\item [II)] If $ \mathcal{F}_{\tau, \beta, \theta}$ is not attained, then $ \mathcal{F}_{\tau, \beta, \theta}\le \Sigma_p$.
\end{enumerate}
It is clear that the contradiction derived from I) and II) ensures the existence of extremal function for $\mathcal{F}_{\tau, \beta, \theta}$.  Note that Theorem~\ref{T2}-$(ii)$ ensures that I) holds.   In view of the Lemma~\ref{lemma NCS}, to conclude II) we only need to show the following result. 
\begin{proposition} Let $\tau\ge 1$ and $\beta>0$.  If $ \mathcal{F}_{\tau, \beta, \theta}$ is not attained, then any of its maximizing sequence is necessarily  NCS.
\end{proposition}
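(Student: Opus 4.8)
The plan is to run the concentration dichotomy for a maximizing sequence, using $\tau\ge 1$ through an elementary scaling inequality. Write $J(u):=\int_{0}^{1}r^{\theta}|u|^{p^*}|\ln(\tau+|u|)|^{r^{\beta}}\,dr$, so $\mathcal F_{\tau,\beta,\theta}=\sup\{J(u):\|u\|=1\}$ is finite by Theorem~\ref{C3-T1} and positive. Let $(u_j)\subset X^{1,p}_1(\alpha_0,\alpha_1)$ with $\|u_j\|=1$ and $J(u_j)\to\mathcal F_{\tau,\beta,\theta}$. By reflexivity, along a subsequence $u_j\rightharpoonup u_0$, and by the compact embeddings \eqref{eq10} and $X^{1,p}_1([\rho,1))\hookrightarrow L^{q}_{\theta}([\rho,1))$ ($q\ge p$) I may assume $u_j\to u_0$ a.e.\ on $(0,1)$ and strongly in $L^{q}_{\theta}([\rho,1))$ for every $q\ge p$ and $\rho\in(0,1)$. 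Two facts will be used throughout: (a) since $\tau\ge 1$, $1\le\tau+t|w|\le\tau+|w|$ for $t\in[0,1]$, so $0\le\ln(\tau+t|w|)\le\ln(\tau+|w|)$ and hence $J(tw)\le t^{p^*}J(w)$; in particular $J(w)\le\|w\|^{p^*}\mathcal F_{\tau,\beta,\theta}$ whenever $\|w\|\le 1$; (b) by Lemma~\ref{C3-L1}, $\|u\|\le 2$ forces $|u(r)|\le 2\kappa\,r^{-(\alpha_1-p+1)/p}$, and since $r\mapsto(\ln(\tau+2\kappa r^{-(\alpha_1-p+1)/p}))^{r^{\beta}}$ tends to $1$ as $r\to 0^+$ and is continuous on $(0,1)$, there is $C_0$ with $|\ln(\tau+|u|)|^{r^{\beta}}\le C_0$ on $(0,1)$ for all such $u$. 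Also $p^*/p=(\theta+1)/(\alpha_1-p+1)>1$ by \eqref{cond m=1}, so $a^{p^*/p}+(1-a)^{p^*/p}<1$ for $a\in(0,1)$. It then suffices to show (i) $u_j\rightharpoonup 0$ for the whole sequence and (ii) $\int_{r_0}^{1}r^{\alpha_1}|u_j'|^{p}\,dr\to 0$ for every $r_0\in(0,1)$.

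\emph{Step (i).} I would show $u_0\neq 0$ contradicts the hypothesis that $\mathcal F_{\tau,\beta,\theta}$ is not attained. Assume $u_0\neq 0$ and put $a=\|u_0\|^{p}\in(0,1]$. Using a Brezis--Lieb type decomposition $\|u_j\|^{p}=\|u_0\|^{p}+\|u_j-u_0\|^{p}+o(1)$ and $J(u_j)=J(u_0)+J(u_j-u_0)+o(1)$ — the second justified via (b) and the near-origin estimates from the proof of Theorem~\ref{C3-T1} — one has $\|u_j-u_0\|^{p}\to 1-a<1$, so by (a) applied to $u_0$ and (for large $j$) to $u_j-u_0$ we get $\mathcal F_{\tau,\beta,\theta}\le\bigl(a^{p^*/p}+(1-a)^{p^*/p}\bigr)\mathcal F_{\tau,\beta,\theta}$; since $\mathcal F_{\tau,\beta,\theta}>0$ this forces $a=1$, i.e.\ $\|u_0\|=1=\lim\|u_j\|$. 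Uniform convexity of $X^{1,p}_1$ then gives $u_j\to u_0$ strongly, and dominated convergence (using (b)) gives $J(u_0)=\mathcal F_{\tau,\beta,\theta}$, so $u_0$ is an extremal, a contradiction. Hence every weak subsequential limit is $0$, and $u_j\rightharpoonup 0$.

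\emph{Step (ii).} Fix $r_0\in(0,1)$ and suppose, toward a contradiction, that $\int_{r_0}^{1}r^{\alpha_1}|u_j'|^{p}\,dr\ge\delta>0$ along a subsequence, so $\int_{0}^{r_0}r^{\alpha_1}|u_j'|^{p}\,dr\le 1-\delta$. Choose $\xi\in C^{\infty}_{0}([0,r_0))$ with $0\le\xi\le 1$ and $\xi\equiv 1$ on $[0,r_0/2]$, and $\epsilon>0$ with $(1+\epsilon)(1-\delta)<1$. Since $(\xi u_j)'=\xi u_j'+\xi' u_j$ with $\xi'$ supported in $[r_0/2,r_0]$ and $u_j\to u_0=0$ in $L^{p}_{\alpha_1}([r_0/2,r_0])$, I obtain $\|\xi u_j\|^{p}\le(1+\epsilon)\int_{0}^{r_0}r^{\alpha_1}|u_j'|^{p}\,dr+o_j(1)<1$ for $j$ large; also $\int_{r_0/2}^{1}r^{\theta}|u_j|^{p^*}|\ln(\tau+|u_j|)|^{r^{\beta}}\,dr\le C_0\int_{r_0/2}^{1}r^{\theta}|u_j|^{p^*}\,dr\to 0$ by (b) and $u_j\to 0$ in $L^{p^*}_{\theta}([r_0/2,1))$. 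As $\xi\equiv 1$ on $[0,r_0/2]$, for $j$ large $J(u_j)\le J(\xi u_j)+\int_{r_0/2}^{1}r^{\theta}|u_j|^{p^*}|\ln(\tau+|u_j|)|^{r^{\beta}}\,dr\le\|\xi u_j\|^{p^*}\mathcal F_{\tau,\beta,\theta}+o_j(1)$, and letting $j\to\infty$ yields $\mathcal F_{\tau,\beta,\theta}\le\bigl((1+\epsilon)(1-\delta)\bigr)^{p^*/p}\mathcal F_{\tau,\beta,\theta}<\mathcal F_{\tau,\beta,\theta}$, a contradiction. Hence (ii) holds for every $r_0\in(0,1)$, and with (i) the sequence $(u_j)$ is NCS.

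\emph{Main obstacle.} The delicate point is Step (i): separating the possibly concentrating tail of $u_j$ near the origin from the fixed part $u_0$. The Brezis--Lieb identity for the $X^{1,p}_1$-norm does not follow from weak convergence alone when $p\neq 2$, so one must either reduce (via Ekeland's variational principle) to an almost-critical maximizing sequence and invoke the a.e.\ convergence of the gradients $u_j'\to u_0'$ for the associated quasilinear equation, or else replace Step (i) by a cut-off/localization argument parallel to Step (ii) — this avoids the norm splitting but requires controlling cut-off errors near $0$, where the Hardy-type bound $|u(r)|\lesssim r^{-(\alpha_1-p+1)/p}$ is sharp yet improves, for the fixed limit, to $|u_0(r)|\le\omega(r)\,r^{-(\alpha_1-p+1)/p}$ with $\omega(r)=\bigl(\int_r^{1}s^{\alpha_1}|u_0'|^{p}\,ds\bigr)^{1/p}\to 0$. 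The companion identity $J(u_j)=J(u_0)+J(u_j-u_0)+o(1)$ is nonstandard because of the variable exponent $r^{\beta}$ and the logarithm, and is handled through the estimates already developed for Theorem~\ref{C3-T1}.
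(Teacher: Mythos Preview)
Your Step~(i) matches the paper's Step~1: Brezis--Lieb for both $J$ and the gradient norm, followed by the strict inequality $(1-t)^{p^*/p}+t^{p^*/p}<1$ on $(0,1)$ to rule out $0<\|u_0\|<1$; you rightly flag that the gradient-norm splitting is not automatic for $p\ne 2$ (the paper simply cites \cite{BL} for it), and your proposed repair via Ekeland is the standard one. Your Step~(ii), however, is genuinely different from the paper's Step~2. The paper invokes Ekeland's variational principle to perturb $(u_j)$ to an almost-critical sequence, derives a Lagrange-multiplier relation \eqref{4.7}, and then tests it with $\bar\eta u_j$ (a cut-off equal to $1$ on $[r_0,1]$) to extract $\int_{r_0}^{1}r^{\alpha_1}|u_j'|^{p}\,dr\to 0$. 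You instead argue by contradiction with a cut-off $\xi$ supported near the \emph{origin}: if $\int_{r_0}^{1}r^{\alpha_1}|u_j'|^{p}\,dr\ge\delta$ along a subsequence, then $\|\xi u_j\|^{p}\le(1+\epsilon)(1-\delta)+o_j(1)<1$, while $J(u_j)\le J(\xi u_j)+o_j(1)\le\|\xi u_j\|^{p^*}\mathcal F_{\tau,\beta,\theta}+o_j(1)$ by your scaling observation~(a), forcing $\mathcal F_{\tau,\beta,\theta}<\mathcal F_{\tau,\beta,\theta}$. Your route is more elementary---it uses only the supremum structure and the compact embedding on $[r_0/2,1)$, with no Euler--Lagrange equation---and it isolates the concentration mechanism cleanly; the paper's approach, on the other hand, becomes essentially free once Ekeland is already in place (as you note it must be, to justify the gradient Brezis--Lieb in Step~(i)).
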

\begin{proof}
Let  $(u_{j})\subset X^{1,p}_{1}(\alpha_0,\alpha_1)$ be  a maximizing sequence for  $\mathcal{F}_{\tau, \beta, \theta}$, i.e. it satisfies
\begin{equation}\label{4.1}
    \|u_j\| = 1\quad \text{and}\quad \mathcal{F}_{\tau, \beta, \theta}= \lim_{j \to \infty} \int_{0}^{1} r^{\theta} |u_{j}|^{p^{*}} |\ln{(\tau + |u_{j}|)}|^{r^{\beta}}\, dr.
\end{equation}
 Up to a subsequence if necessary, we can take $u \in X^{1,p}_{1}(\alpha_0,\alpha_1)$ such that $u_{j} \rightharpoonup u$ weakly in $X_{1}^{1,p}(\alpha_0,\alpha_1)$, $u_{j} \to u$ a.e. in $(0,1)$ and   $\|u\| \leq \liminf\|u_{j}\| = 1$.
 
 We will divide  our proof into two steps:
 
\paragraph{\textbf{Step~1:}} We prove that $u\equiv0$. Suppose  that $u\not\equiv0$. Set $v_{j} = u_{j}-u$. By  using Brezis-Lieb type argument in \cite{BL},
we obtain
\begin{eqnarray}\label{4.2}
        \int_{0}^{1} r^{\theta} |u_{j}|^{p^{*}} \left| \ln{(\tau + |u_{j}|)}\right|^{r^{\beta}} \, dr&=&     \int_{0}^{1} r^{\theta} |v_{j} |^{p^{*}} \left| \ln{(\tau + |v_{j}|)}\right|^{r^{\beta}} \, dr \nonumber\\
    &+&     \int_{0}^{1} r^{\theta} |u|^{p^{*}} \left| \ln{(\tau + |u|)}\right|^{r^{\beta}}\, dr + o_{j}(1),    
\end{eqnarray}
and 
\begin{eqnarray}\label{4.3}
        1= \int_{0}^{1} r^{\alpha_{1}} |u'_{j}|^{p} \, dr =     \int_{0}^{1} r^{\alpha_{1}} |v'_{j}|^{p} \, dr
    +     \int_{0}^{1} r^{\alpha_{1}} |u'|^{p} \, dr + o_{j}(1),    
\end{eqnarray}
where $o_j(1) \to 0$ as $j \to \infty$.  If $\|u\| = 1$ in \eqref{4.3}, we obtain $u_{j} \to u$ strongly in $X^{1,p}_{1}(\alpha_0,\alpha_1)$ or $v_j\to 0$ strongly in $X^{1,p}_{1}(\alpha_0,\alpha_1)$. Thus, by combining the continuous embedding in Corollary~\ref{C3-C1} with \eqref{4.2} we have that $u$ is an extremal function for  $\mathcal{F}_{\tau, \beta, \theta}$, which contradicts our assumption. Therefore, we may assume that $0<\|u\| < 1$. In this case, from \eqref{4.1}, \eqref{4.2} and \eqref{4.3}  we can write
\begin{equation}
\begin{aligned}
    \mathcal{F}_{\tau, \beta, \theta} 
    &= \|v_{j}\|^{p^{*}} \int_{0}^{1} r^{\theta} \Big(\dfrac{|v_{j}|}{\|v_{j}\|}\Big)^{p^{*}} \left| \ln{(\tau + |v_{j}|)}\right|^{r^{\beta}}\, dr \nonumber\\
    &+    \|u\|^{p^*} \int_{0}^{1} r^{\theta} \Big(\dfrac{|u|}{\|u\|}\Big)^{p^{*}} \left| \ln{(\tau + |u|)}\right|^{r^{\beta}}\, dr + o_{j}(1) \nonumber\\   
    &\le \|v_{j}\|^{p^{*}} \int_{0}^{1} r^{\theta} \Big(\dfrac{|v_{j}|}{\|v_{j}\|}\Big)^{p^{*}} \Big| \ln\Big(\tau + \Big|\frac{v_{j}}{\|v_j\|}\Big|\Big)\Big|^{r^{\beta}}\, dr \nonumber\\
   & +  \|u\|^{p^{*}} \int_{0}^{1} r^{\theta} \Big(\dfrac{|u|}{\|u\|}\Big)^{p^{*}} \Big| \ln\Big(\tau + \Big|\frac{u}{\|u\|}\Big|\Big)\Big|^{r^{\beta}}\, dr +o_j(1)\nonumber\\
  & \le \mathcal{F}_{\tau, \beta, \theta} \left(\|v_j\|^{p^{*}} +\|u\|^{p^{*}}\right) + o_{j}(1)\nonumber\\
&= \mathcal{F}_{\tau, \beta, \theta}\left(\left(1-\|u\|^p+ o_{j}(1) \right)^{\frac{p^{*}}{p}}+\left(\|u\|^p\right)^{\frac{p^{*}}{p}}\right) + o_{j}(1)\nonumber\\
&<  \mathcal{F}_{\tau, \beta, \theta},
\end{aligned}
\end{equation}
which is an contradiction, where  we used that $\left(1-t\right)^{\frac{p^{*}}{p}}+t^{\frac{p^{*}}{p}}<1$ for all $t \in (0, 1)$. Then $u\equiv 0$, as desired. 

\paragraph{\textbf{Step~2:}} For each $r_{0}\in (0, 1)$ holds
\begin{equation}\label{4.6}
    \int_{r_{0}}^{1} r^{\alpha_{1}} |u_{j}'|^{p} \to 0, \quad \text{as} \,\, j \to 0. 
\end{equation}
From Lemma \ref{C3-L1}  we have 
\begin{equation}\label{4.6-}
 C_{0} = C_{0}(\alpha_{1},\beta, \tau, r_{0})= \sup_{r \in [r_{0}, 1]}|\ln{(\tau + |u_{j}|)}|^{r^{\beta}} <\infty.
\end{equation}
Hence, the compact embedding 
\begin{equation}\label{R0-compact}
X^{1, p}_{1}([r_{0}, 1])\hookrightarrow  L^{q}_{\theta}([r_{0}, 1]),\;\;  q\ge p
\end{equation} yields
\begin{eqnarray}\label{4.7-}
    \int_{r_{0}}^{1} r^{\theta} |u_{j}|^{p^{*}} |\ln{(\tau + |u_{j}|)}|^{r^{\beta}}\, dr 
    &\leq& C_{0} \int_{r_{0}}^{1} r^{\theta} |u_{j}|^{p^{*}}\, dr \to 0, \quad \text{as}\,\,j \to \infty.
\end{eqnarray}
Since $(u_{j})$ is a maximizing sequence, by the Ekeland's variational principle \cite[Theorem~3.1]{Ekeland} there exists a multiplier $\lambda_j$ such that 
\begin{equation}\label{4.7}
\begin{aligned}
    \lambda_{j} \int_{0}^{1} r^{\alpha_{1}} |u_{j}'|^{p-2} u'_{j}\varphi'\, dr &= p^{*}\int_{0}^{1} r^{\theta} |u_{j}|^{p^{*}-2} \left(\ln{(\tau +|u_{j}|)}\right)^{r^{\beta}}u_{j}\varphi\, dr \\
    &+  \int_{0}^{1} \dfrac{r^{\theta+\beta} |u_{j}|^{p^{*}-1} u_{j} \varphi}{ (\tau +|u_{j}|)\left(\ln{(\tau +|u_{j}|)}\right)^{1- r^{\beta}}} \, dr + \langle o_{j}(1), \varphi \rangle.
    \end{aligned}
\end{equation}
By choosing $\varphi = u_{j}$ in \eqref{4.7} we obtain
\begin{eqnarray*}
    \lambda_{j} \int_{0}^{1} r^{\alpha_{1}} |u_{j}'|^{p} \, dr 
 &\geq& p^{*}\int_{0}^{1} r^{\theta} |u_{j}|^{p^{*}} \left(\ln{(\tau +|u_{j}|)}\right)^{r^{\beta}}\, dr + \langle o_{j}(1), u_{j} \rangle. \nonumber\\
\end{eqnarray*}
Letting $j \to \infty$, it  follows that $\liminf \lambda_{j} \geq p^{*} \mathcal{F}_{\tau, \beta, \theta}$. Let $\Bar{\eta}:[0,1]\to[0,1]$ be a smooth function such that  $\Bar{\eta} \equiv 0$ in $[0, r_{0}/2]$ and $\Bar{\eta} \equiv 1$ in $[r_{0}, 1]$. From \eqref{R0-compact}, we have 
\begin{equation}\label{4.8}
\begin{aligned}
    \int_{0}^{1} \dfrac{r^{\theta+\beta} |u_{j}|^{p^{*}-1} u_{j} (\Bar{\eta} u_{j})}{ (\tau +|u_{j}|)\left(\ln{(\tau +|u_{j}|)}\right)^{1- r^{\beta}}} \, dr &=     \int_{r_{0}/2}^{1} \dfrac{r^{\theta+\beta} |u_{j}|^{p^{*}+1}\Bar{\eta} }{ (\tau +|u_{j}|)\left(\ln{(\tau +|u_{j}|)}\right)^{1- r^{\beta}}} \, dr \\
    &\leq   \int_{r_{0}/2}^{1} \dfrac{r^{\theta} |u_{j}|^{p^{*}} }{\left(\ln{(\tau +|u_{j}|)}\right)^{1- r^{\beta}}} \, dr\\
    &\leq   C\int_{r_{0}/2}^{1} r^{\theta} |u_{j}|^{p^{*}}  \, dr \to 0, 
    \end{aligned}
    \end{equation}
where we  used $0\leq |u_{j}|< |u_{j}| + \tau$, $\left(\ln{(\tau +|u_{j}|)}\right)^{r^{\beta}} \leq C \ln{(\tau +|u_{j}|)}$ for $\tau \in [1, \infty)$.
Thus, by taking $\varphi = \Bar{\eta} u_{j}$ in \eqref{4.7} and combining
 \eqref{4.7-} and \eqref{4.8}  we get
\begin{equation}\label{upp-dow}
\begin{aligned}
    o_{j}(1) &= \int_{r_{0}/2}^{1} r^{\alpha_{1}} |u_{j}'|^{p-2} u'_{j} (\Bar{\eta} u_{j})'\, dr\\
    &= \int_{r_{0}/2}^{1} r^{\alpha_{1}} |u_{j}'|^{p} \Bar{\eta}\, dr+ \int_{r_{0}/2}^{1} r^{\alpha_{1}} |u_{j}'|^{p-2} u'_{j}  u_{j} \Bar{\eta}'\, dr\\
    &\geq \int_{r_{0}}^{1} r^{\alpha_{1}} |u_{j}'|^{p}\, dr - \|\Bar{\eta}'\|_{\infty} \|u_{j}'\|_{L^{p}_{\alpha_{1}}}^{p-1}   \left(\int_{r_{0}}^{1}r^{\alpha_1}|u_{j}|^{p} \, dr\right)^{\frac{1}{p}}\\
   &= \int_{r_{0}}^{1} r^{\alpha_{1}} |u_{j}'|^{p}\, dr + o_{j}(1)
   \end{aligned}
\end{equation}
where we used $\int_{r_{0}}^{1}r^{\alpha_1}|u_{j}|^{p} \, dr\le C\int_{r_0}^{1}r^{\theta}|u_{j}|^{p} \, dr\to 0$ as $j\to\infty$. Finally, \eqref{upp-dow} ensures \eqref{4.6} holds.
\end{proof}
\section{Application to a class of quasilinear elliptic equations}\label{sec24}
 In this section we prove the existence of a nontrivial weak solution to problem \eqref{m.5} stated in  Theorem \ref{C3-T4}. To achieve our result, we apply variational arguments to the functional  $I:X^{1,p}_{1}(\alpha_0,\alpha_1) \to \mathbb{R}$ defined by
\begin{equation}\label{funcional - I(u)}
    I(u) = \frac{1}{p} \int_{0}^{1} r^{\alpha_{1}} |u'|^{p} \, dr - \dfrac{1}{p^{*}}\int_{0}^{1}  r^{\theta} |u|^{p^{*}} (\ln{(\tau + |u|)})^{r^{\beta}}\, dr + \int_{0}^{1} r^{\theta} G(r, u)  \,dr,
\end{equation}
where
$$G(r, u)= \int_{0}^{u} g(r, s) \, ds, \quad g(r, s)= \dfrac{r^{\beta} |s|^{p^{*}-1}s}{p^{*}(\tau+ |s|)(\ln{(\tau+ |s|)})^{1-r^{\beta}}}.$$
Note that $g(0, u) =0$ and,  for any small $\epsilon>0$, there exists a large constant $C_{\epsilon}>0$ such that
\begin{equation}\label{5.3}
    |G(r, u)| \leq r^{\beta} (\epsilon |u|^{p^{*}} + C_{\epsilon} |u|^{p}) \quad \text{and} \quad     |G(r, u)| \leq r^{\beta} (\epsilon |u|^{p} + C_{\epsilon} |u|^{p^{*}}).
\end{equation}
From Theorem \ref{C3-T1},  we can deduce that the functional $I$ is well-defined and of class $C^{1}$ on $X^{1,p}_{1}(\alpha_0,\alpha_1)$. In fact, for all $\varphi \in X^{1,p}_{1}(\alpha_0,\alpha_1)$ we have
\begin{equation}\label{funcional derivada - I'(u)}
     \langle I'(u), \varphi \rangle = \int_{0}^{1}  r^{\alpha_{1}} |u'|^{p-2}  u'\varphi'\, dr - \int_{0}^{1} r^{\theta} |u|^{p^{*}-1} \left(\ln{(\tau + |u|)}\right)^{r^{\beta}} \varphi \, dr.
\end{equation}
So, the critical points of the functional $I$ are weak solutions of \eqref{m.5}. As in \cite{MR3514752},  we will apply a version of the mountain pass theorem without the Palais-Smale condition due to Brezis and Nirenberg, see \cite[Theorem 2.2]{MR0709644} to obtain critical points of $I$. We will proceed in the following steps: 
\begin{itemize}
     \item[(A)] [Lemma~\ref{geometria PM}] The functional $I$ has the mountain pass geometry.
     
    \item [(B)] [Lemma \ref{lema B}] The mountain pass level $c_{MP}$ satisfies
    $$c_{MP} < \Big(\frac{1}{p}-\frac{1}{p^*}\Big)\mathcal{S}^{\frac{\theta+1}{\theta-\alpha_1+p}}.$$
    
     \item [(C)][Lemma \ref{lema A}] There is a loss of compactness for the functional $I$ at the level 
$$ \Big(\frac{1}{p}-\frac{1}{p^*}\Big)\mathcal{S}^{\frac{\theta+1}{\theta-\alpha_1+p}}.$$    

    \item [(D)] [Lemma \ref{seq limitada}] There is a nontrivial weak solution at the level $c_{MP}>0$. 
\end{itemize}

\begin{lemma}\label{geometria PM}
The functional $I$ has the  mountain pass geometry. 
 \begin{itemize}
     \item[$(a)$] $I(0)=0$.
    \item [$(b)$] For each $u \in X^{1,p}_{1}(\alpha_0,\alpha_1)\setminus \{0\}$ with $u \geq 0$, we have
    $I(tu) \to - \infty,$ if $ t \to +\infty.$
    \item [$(c)$] There are $\delta, \rho>0$ such that $I(u) \geq \delta$, if $\|u\| = \rho.$
 \end{itemize}
\end{lemma}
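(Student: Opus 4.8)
The plan is to check the three items separately. Item $(a)$ is immediate: $G(r,0)=\int_0^0 g(r,s)\,ds=0$, so all three summands in \eqref{funcional - I(u)} vanish at $u=0$. Items $(b)$ and $(c)$ will both reduce to the elementary inequality $p^{*}>p$, which holds under \eqref{cond m=1} because $p^{*}=\frac{(\theta+1)p}{\alpha_1-p+1}$ and $\theta>\alpha_1-p$.

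For $(c)$ I would estimate the critical middle term of \eqref{funcional - I(u)} by the \emph{finite} constant $\mathcal{F}_{\tau,\beta,\theta}$ furnished by Theorem~\ref{C3-T1}. Taking $\|u\|=\rho\le1$ and $w=u/\rho$ (so $\|w\|=1$), and using that $\tau\ge1$ makes $t\mapsto(\ln(\tau+t))^{r^{\beta}}$ nondecreasing on $[0,\infty)$ together with $\rho|w|\le|w|$, one gets
\[
\int_0^1 r^{\theta}|u|^{p^{*}}(\ln(\tau+|u|))^{r^{\beta}}\,dr=\rho^{p^{*}}\int_0^1 r^{\theta}|w|^{p^{*}}(\ln(\tau+\rho|w|))^{r^{\beta}}\,dr\le\rho^{p^{*}}\,\mathcal{F}_{\tau,\beta,\theta}.
\]
Next I would observe that $g(r,s)$ has the sign of $s$ (its denominator $p^{*}(\tau+|s|)(\ln(\tau+|s|))^{1-r^{\beta}}$ is positive since $\tau\ge1$), hence $G(r,u)=\int_0^{u}g(r,s)\,ds\ge0$ for every $u$, so the last term in \eqref{funcional - I(u)} is nonnegative. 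Combining, $I(u)\ge\frac1p\rho^{p}-\frac1{p^{*}}\mathcal{F}_{\tau,\beta,\theta}\rho^{p^{*}}$ whenever $\|u\|=\rho\le1$; since $p^{*}>p$ this lower bound behaves like $\frac1p\rho^{p}$ as $\rho\to0^{+}$, so it suffices to fix $\rho>0$ small and set $\delta=\frac1p\rho^{p}-\frac1{p^{*}}\mathcal{F}_{\tau,\beta,\theta}\rho^{p^{*}}>0$.

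For $(b)$, fix $u\in X^{1,p}_1(\alpha_0,\alpha_1)\setminus\{0\}$ with $u\ge0$ and analyze, as $t\to\infty$,
\[
I(tu)=\frac{t^{p}}{p}\int_0^1 r^{\alpha_1}|u'|^{p}\,dr-\frac{t^{p^{*}}}{p^{*}}\int_0^1 r^{\theta}u^{p^{*}}(\ln(\tau+tu))^{r^{\beta}}\,dr+\int_0^1 r^{\theta}G(r,tu)\,dr .
\]
The first term is $O(t^{p})$. For the second term I would choose $\epsilon_0>0$ with $|A_0|>0$, where $A_0:=\{r\in(0,1):u(r)\ge\epsilon_0\}$; for $t$ large one has $\ln(\tau+tu)\ge1$ on $A_0$, so $(\ln(\tau+tu))^{r^{\beta}}\ge1$ there, giving $\frac{t^{p^{*}}}{p^{*}}\int_0^1 r^{\theta}u^{p^{*}}(\ln(\tau+tu))^{r^{\beta}}\,dr\ge c_{u}t^{p^{*}}$ with $c_{u}:=\frac1{p^{*}}\int_{A_0}r^{\theta}u^{p^{*}}\,dr>0$. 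For the last term I would invoke \eqref{5.3}, which yields $\int_0^1 r^{\theta}G(r,tu)\,dr\le\epsilon\,t^{p^{*}}\!\int_0^1 r^{\theta+\beta}u^{p^{*}}\,dr+C_{\epsilon}\,t^{p}\!\int_0^1 r^{\theta+\beta}u^{p}\,dr$, both integrals being finite because $r^{\theta+\beta}\le r^{\theta}$ on $(0,1)$ and $u\in L^{p^{*}}_{\theta}\cap L^{p}_{\theta}$ by \eqref{eq10}. Picking $\epsilon$ so small that $\epsilon\int_0^1 r^{\theta+\beta}u^{p^{*}}\,dr\le c_u/2$, we get $I(tu)\le C\,t^{p}-\frac{c_u}{2}t^{p^{*}}\to-\infty$, since $p^{*}>p$.

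The routine parts are the homogeneity computations and the use of \eqref{5.3}; the only point requiring a little care is keeping the coefficient of $t^{p^{*}}$ in the critical term bounded below by a \emph{positive} constant uniformly in $t$ — this is precisely why one works on the sublevel set $\{u\ge\epsilon_0\}$ rather than on $\{u>0\}$, as $\ln\tau=0$ when $\tau=1$. Everything else (finiteness of $\mathcal{F}_{\tau,\beta,\theta}$ from Theorem~\ref{C3-T1}, the growth bounds \eqref{5.3}, and the embedding \eqref{eq10}) is already available.
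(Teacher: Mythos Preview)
Your proof is correct and follows essentially the same strategy as the paper: bound the critical term via Theorem~\ref{C3-T1}, control the $G$-term via \eqref{5.3}, and conclude from $p^{*}>p$. Two minor differences are worth noting: in $(c)$ you observe directly that $G(r,u)\ge0$ (since $g(r,\cdot)$ is odd with the sign of $s$), which is cleaner than the paper's detour through \eqref{5.3}; in $(b)$ you work on the sublevel set $\{u\ge\epsilon_0\}$ to force $(\ln(\tau+tu))^{r^\beta}\ge1$, whereas the paper simply uses the monotonicity $\ln(\tau+tu)\ge\ln(\tau+u)$ for $t\ge1$ to get the fixed positive coefficient $\frac{1}{p^*}\int_0^1 r^\theta u^{p^*}(\ln(\tau+u))^{r^\beta}\,dr$.
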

\begin{proof}
Obviously $I(0)=0$.  Let  $u \in X^{1,p}_{1}(\alpha_0,\alpha_1)$ with $u \geq 0$.  If $t \ge 1 $, then $\ln(\tau+tu)\ge \ln(\tau+u)$ on $(0,1)$. Then, for all $t\ge1$,  \eqref{5.3} yields
\begin{equation}
\begin{aligned}
    I(t u)   &\le  \dfrac{t^{p}}{p} \int_{0}^{1} r^{\alpha_{1}} |u'|^{p} \, dr - \dfrac{t^{p^{*}}}{p^{*}}\int_{0}^{1}  r^{\theta} |u|^{p^{*}} (\ln{(\tau + |u|)})^{r^{\beta}}\, dr \\
    &+\epsilon t^{p^*}\int_{0}^{1} r^{\theta+\beta}|u|^{p^*} \, dr+C_\epsilon  t^{p}\int_{0}^{1} r^{\theta+\beta}|u|^{p} \, dr\\
    &\le  \dfrac{t^{p}}{p} \|u\|^p - t^{p^{*}}\Big[ \dfrac{1}{p^{*}}\int_{0}^{1}  r^{\theta} |u|^{p^{*}} (\ln(\tau + |u|))^{r^{\beta}}\,dr-\epsilon\|u\|^p_{L^p_{\theta}}\Big]+C_\epsilon  t^{p}\|u\|^{p}_{L^p_{\theta}}.
    \end{aligned}
\end{equation}
Since $p^{*}>p$, by choosing $\epsilon>0$ small enough so that the term in the bracket is positive, and letting  $t \to \infty$ we get $(b)$. To prove $(c)$ we note that, for any $u \in X^{1,p}_{1}(\alpha_0,\alpha_1)$ with $0<\|u\|<1$, Theorem \ref{C3-T1} implies 
\begin{eqnarray*}
    \frac{1}{\|u\|^{p^*}}\int_{0}^{1} r^{\theta} |u|^{p^{*}} |\ln{(\tau+|u|)}|^{r^{\beta}}\, dr \leq \int_{0}^{1} r^{\theta} \Big| \dfrac{u}{\|u\|}\Big|^{p^*} \left|\ln{\left(\tau+    \Big| \dfrac{u}{\|u\|}\Big|\right)}\right|^{r^{\beta}}\, dr\leq \mathcal{F}_{\tau, \beta, \theta}
\end{eqnarray*}
which means that 
\begin{equation}\label{5.4}
       \int_{0}^{1} r^{\theta} |u|^{p^{*}} |\ln{(\tau+|u|)}|^{r^{\beta}}\, dr \leq \mathcal{F}_{\tau, \beta, \theta} \|u\|^{p^{*}}, \quad \text{if}\,\,\|u\|<1.
\end{equation}
From the continuous embedding \eqref{eq10} and the estimates  \eqref{5.3} and \eqref{5.4} we have
\begin{equation}
\begin{aligned}
    I(u)  &\geq  \frac{1}{p} \|u\|^{p}  - \dfrac{\mathcal{F}_{\tau, \beta, \theta} }{p^{*}} \|u\|^{p^{*}} - \int_{0}^{1}r^{\theta+ \beta} (\epsilon|u|^{p}+ C_{\epsilon}|u|^{p^{*}})\, dr \\
    &\geq  \frac{1}{p} \|u\|^{p}  - \dfrac{\mathcal{F}_{\tau, \beta, \theta} }{p^{*}} \|u\|^{p^{*}} - \int_{0}^{1}r^{\theta} (\epsilon|u|^{p}+ C_{\epsilon}|u|^{p^{*}})\, dr \\
   &=  \frac{1}{p} \|u\|^{p}  - \dfrac{\mathcal{F}_{\tau, \beta, \theta} }{p^{*}} \|u\|^{p^{*}} - \epsilon\|u\|^{p}_{L^{p}_{\theta}} -  C_{\epsilon} \|u\|^{p^{*}}_{L^{p^{*}}_{\theta}}\\
   &\ge  \frac{1}{p} \|u\|^{p}  - \dfrac{\mathcal{F}_{\tau, \beta, \theta} }{p^{*}} \|u\|^{p^{*}} - \epsilon C_1\|u\|^{p} -  C_{\epsilon}C_2 \|u\|^{p^{*}}\\
     &= \Big(\frac{1}{p} - \epsilon C_{1} \Big)\|u\|^{p}  - \dfrac{\mathcal{F}_{\tau, \beta, \theta} + p^{*}C_{2}C_{\epsilon}}{p^{*}} \|u\|^{p^{*}}.
     \end{aligned}
\end{equation}
Since $p^*>p$,  we can choose $\rho>0$ small enough and $\delta>0$ satisfying $I(u)\geq \delta$ for $\|u\|= \rho$. This prove that $(c)$ holds.
\end{proof}
 Let us take $u_{\epsilon}$ as in \eqref{definition u} with $\widehat{A}=1$. 
In view of Lemma \ref{geometria PM}, we can take the mountain pass level
\begin{equation}\label{Nivel MP} 
     c_{MP}= \inf_{\gamma \in \Gamma} \max_{u \in \gamma} I(u)
\end{equation}
where 
$$\Gamma = \{\gamma : [0,T] \to  X^{1,p}_{1}(\alpha_0,\alpha_1)\;: \;\gamma \,\,\text{is continuous}, \,\, \gamma(0)=0\,\mbox{and}\;\; \gamma(T)=Tu_{\epsilon}\}$$
with  $T>0$ large enough, so that $I(Tu_{\epsilon})<0$. By Lemma \ref{geometria PM}  follows that $\Gamma \neq \emptyset$ and $c_{MP} \geq  \delta > 0$.
\begin{lemma}\label{lema B} The mountain pass level $c_{MP}$ satisfies
    $$0<c_{MP} < \Big(\frac{1}{p}-\frac{1}{p^*}\Big)\mathcal{S}^{\frac{\theta+1}{\theta-\alpha_1+p}}.$$
\end{lemma}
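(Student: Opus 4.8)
The plan is to bound the mountain pass level from above by evaluating $I$ along the single admissible path $\gamma_{0}(t)=tu_{\epsilon}$, $t\in[0,T]$, where $u_{\epsilon}$ is the truncated Bliss bubble from \eqref{definition u} (with $\widehat A=1$, as already fixed before the statement) and $T>0$ is chosen so large that $I(Tu_{\epsilon})<0$, which is possible by Lemma~\ref{geometria PM}$(b)$; then $\gamma_{0}\in\Gamma$, so $c_{MP}\le\max_{t\ge 0}I(tu_{\epsilon})$, while the strict positivity $c_{MP}>0$ is already furnished by Lemma~\ref{geometria PM}$(c)$. Abbreviating $K:=\mathcal{S}^{\frac{\theta+1}{\theta-\alpha_{1}+p}}$ and using
$\int_{0}^{1}r^{\theta}|u_{\epsilon}|^{p^{*}}(\ln(\tau+t|u_{\epsilon}|))^{r^{\beta}}\,dr=\|u_{\epsilon}\|_{L^{p^{*}}_{\theta}}^{p^{*}}+\mathcal{E}_{t}(0,1)$ with the notation \eqref{EtAB}, I would split
$$I(tu_{\epsilon})=\psi_{\epsilon}(t)-\frac{t^{p^{*}}}{p^{*}}\,\mathcal{E}_{t}(0,1)+\int_{0}^{1}r^{\theta}G(r,tu_{\epsilon})\,dr,\qquad \psi_{\epsilon}(t):=\frac{t^{p}}{p}\|u_{\epsilon}\|^{p}-\frac{t^{p^{*}}}{p^{*}}\|u_{\epsilon}\|_{L^{p^{*}}_{\theta}}^{p^{*}},$$
so that the task reduces to showing $\max_{t\ge 0}I(tu_{\epsilon})<\big(\tfrac1p-\tfrac1{p^{*}}\big)K$ for all small $\epsilon>0$.

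The first step is to locate the maximizer. The one–variable function $\psi_{\epsilon}$ attains its unique maximum at $\bar t_{\epsilon}=\big(\|u_{\epsilon}\|^{p}/\|u_{\epsilon}\|_{L^{p^{*}}_{\theta}}^{p^{*}}\big)^{1/(p^{*}-p)}$ with value $\big(\tfrac1p-\tfrac1{p^{*}}\big)\big(\|u_{\epsilon}\|^{pp^{*}}/\|u_{\epsilon}\|_{L^{p^{*}}_{\theta}}^{p^{*}p}\big)^{1/(p^{*}-p)}$; by \eqref{3.6}--\eqref{3.7} one has $\|u_{\epsilon}\|^{p}=K+O(\epsilon^{sp})$ and $\|u_{\epsilon}\|_{L^{p^{*}}_{\theta}}^{p^{*}}=K+O(\epsilon^{sp^{*}})$, and since $sp^{*}>sp$ this yields $\bar t_{\epsilon}\to1$ and $\max_{t\ge0}\psi_{\epsilon}(t)=\big(\tfrac1p-\tfrac1{p^{*}}\big)K+O(\epsilon^{sp})$. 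By Lemma~\ref{lema20-B}, $\mathcal{E}_{t}(0,1)=o(1)$ uniformly for $t$ in any compact subinterval of $(0,\infty)$, and by \eqref{5.3} together with the bubble bounds $\int_{0}^{1}r^{\theta+\beta}|u_{\epsilon}|^{p^{*}}\,dr=O(\epsilon^{\beta})$ and $\int_{0}^{1}r^{\theta+\beta}|u_{\epsilon}|^{p}\,dr=o(1)$ (both obtained exactly as in \eqref{atob}, splitting the integral at $r=\epsilon$, invoking \eqref{relations-paramenters-two}, and using $\beta<sp^{*}$, which holds since $\alpha_{1}-p+1<\theta+1$) one gets that $\int_{0}^{1}r^{\theta}G(r,tu_{\epsilon})\,dr=o(1)$ as well, uniformly for bounded $t$. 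Hence $I(tu_{\epsilon})\to\psi_{0}(t):=\tfrac{t^{p}}{p}K-\tfrac{t^{p^{*}}}{p^{*}}K$ uniformly on compacts, and since $\psi_{0}$ has a strict maximum only at $t=1$, while $I(tu_{\epsilon})\to-\infty$ as $t\to\infty$ for each fixed $\epsilon$ by Lemma~\ref{geometria PM}$(b)$, the supremum over $t\ge0$ is attained at some $t_{\epsilon}>0$ lying in a fixed compact subinterval of $(0,\infty)$; passing to a subsequence we may assume $t_{\epsilon}\to t_{0}>0$.

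With $t_{\epsilon}\to t_{0}>0$, Lemma~\ref{lema20} applies and gives, for some $c_{1}>0$,
$$\frac{t_{\epsilon}^{p^{*}}}{p^{*}}\,\mathcal{E}_{t_{\epsilon}}(0,1)\ \ge\ c_{1}\,\epsilon^{\beta}\ln|\ln\epsilon|+O\!\big(\epsilon^{\frac{\theta+1}{p}}\big),$$
the last remainder being absent when $\tau\ge e$; moreover $\int_{0}^{1}r^{\theta}G(r,t_{\epsilon}u_{\epsilon})\,dr=O(\epsilon^{\beta})$, as above, after first fixing the free parameter in \eqref{5.3} small. Collecting these estimates,
$$\max_{t\ge0}I(tu_{\epsilon})=\psi_{\epsilon}(t_{\epsilon})-\frac{t_{\epsilon}^{p^{*}}}{p^{*}}\mathcal{E}_{t_{\epsilon}}(0,1)+\int_{0}^{1}r^{\theta}G(r,t_{\epsilon}u_{\epsilon})\,dr\ \le\ \Big(\frac1p-\frac1{p^{*}}\Big)K+O(\epsilon^{sp})+O\!\big(\epsilon^{\frac{\theta+1}{p}}\big)-c_{1}\epsilon^{\beta}\ln|\ln\epsilon|.$$
Because the hypothesis is precisely $0<\beta<\min\{(\theta+1)/p,\,(\alpha_{1}-p+1)/(p-1)\}=\min\{(\theta+1)/p,\,sp\}$, each of the positive error terms is $o(\epsilon^{\beta}\ln|\ln\epsilon|)$, so for $\epsilon$ small the right–hand side is strictly below $\big(\tfrac1p-\tfrac1{p^{*}}\big)K$; combined with $c_{MP}\ge\delta>0$ this is the claim.

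The main obstacle is the quantitative balance in this last display: the negative gain $-c_{1}\epsilon^{\beta}\ln|\ln\epsilon|$ produced by the logarithmic nonlinearity must dominate the concentration remainder $O(\epsilon^{sp})$ coming from the truncation of the Bliss bubble in \eqref{3.6}, the remainder $O(\epsilon^{(\theta+1)/p})$ from Lemma~\ref{lema20}, and the nonnegative contribution of the compensating term $G$; this is exactly why the restriction $0<\beta<\min\{(\theta+1)/p,(\alpha_{1}-p+1)/(p-1)\}$ is imposed, and it is the only place where it is used. A secondary technical point is ensuring that the maximizing parameter $t_{\epsilon}$ does not degenerate to $0$ or $\infty$, so that Lemma~\ref{lema20}, whose constant depends on the positive limit $t_{0}$, is legitimately applicable; this is handled by the uniform convergence $I(tu_{\epsilon})\to\psi_{0}(t)$ on compact $t$–sets.
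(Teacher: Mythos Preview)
Your proof is correct and follows the same overall plan as the paper's: bound $c_{MP}$ from above by $\max_{t\ge0}I(tu_{\epsilon})$ and show the latter is strictly below $\big(\tfrac1p-\tfrac1{p^{*}}\big)K$ for $\epsilon$ small, using Lemma~\ref{lema20} for the gain and the bubble estimates \eqref{3.6}--\eqref{3.7}, \eqref{5.3} for the errors. The difference is in how you handle the maximizer $t_{\epsilon}$. The paper differentiates $t\mapsto I(tu_{\epsilon})$, uses the resulting identity together with Lemma~\ref{lema20} and Lemma~\ref{lema20-B} to pin down $t_{\epsilon}=1+O(\epsilon^{\beta}\ln|\ln\epsilon|)$, and then feeds the expansions $t_{\epsilon}^{p}=1+pT_{\epsilon}+\cdots$, $t_{\epsilon}^{p^{*}}=1+p^{*}T_{\epsilon}+\cdots$ back into $I(t_{\epsilon}u_{\epsilon})$; the cross terms in $T_{\epsilon}$ cancel and one is left with the same balance of errors. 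Your shortcut is the elementary inequality $\psi_{\epsilon}(t_{\epsilon})\le\max_{t}\psi_{\epsilon}(t)=\big(\tfrac1p-\tfrac1{p^{*}}\big)K+O(\epsilon^{sp})$, which bypasses the precise location of $t_{\epsilon}$ entirely and only requires $t_{\epsilon}$ to stay in a compact of $(0,\infty)$ so that Lemma~\ref{lema20} applies and the $G$-integral is $O(\epsilon^{\beta})$; this is a genuine simplification. Two small remarks: in your final displayed inequality the $O(\epsilon^{\beta})$ contribution from $\int r^{\theta}G(r,t_{\epsilon}u_{\epsilon})\,dr$ should appear explicitly (you state it in words just before, and it is of course $o(\epsilon^{\beta}\ln|\ln\epsilon|)$, so the conclusion is unaffected); and the compactness of $t_{\epsilon}$, which the paper also asserts rather quickly, is most cleanly justified via the critical-point identity $\|u_{\epsilon}\|^{p}=t_{\epsilon}^{p^{*}-p}\int r^{\theta}|u_{\epsilon}|^{p^{*}}(\ln(\tau+t_{\epsilon}|u_{\epsilon}|))^{r^{\beta}}\,dr$ together with the uniform positivity of $\int_{0}^{\epsilon}r^{\theta}|u_{\epsilon}|^{p^{*}}\,dr$.
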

\begin{proof}
 For each $\epsilon>0$, consider the path $\gamma_{\epsilon} \in \Gamma$ given by $\gamma_{\epsilon} (t) = t u_{\epsilon}$, with $t\in [0,T]$.  Then,  from the definition of \eqref{Nivel MP},  there exists $t_{\epsilon}>0$ such that 
\begin{equation}\label{5.6}
    (I \circ \gamma_{\epsilon})(t_{\epsilon}) = \displaystyle\max_{t \in [0, T]} (I \circ \gamma_{\epsilon}) (t) \geq c_{MP}.
\end{equation}
We claim that 
\begin{equation}\label{5.7-}
    t_{\epsilon} \to 1,  \quad \text{ as }\,\, \epsilon \to 0.
\end{equation}
Indeed, from $\frac{d}{dt} I ( \gamma_{\epsilon}(t)) |_{t = t_{\epsilon}} = 0$ we obtain
\begin{equation}\label{5.7}
       t_{\epsilon}^{p-1}\int_{0}^{1}  r^{\alpha_{1}} | u_{\epsilon}^{\prime}|^{p} \, dr = t_{\epsilon}^{p^{*}-1}\int_{0}^{1} r^{\theta} |u_{\epsilon}|^{p^{*}} \left(\ln{(\tau + |t_{\epsilon} u_{\epsilon}|)}\right)^{r^{\beta}} \, dr.
\end{equation}
From  \eqref{3.6}, \eqref{3.7} and \eqref{5.7}, we can write
\begin{equation}\label{5.8}
\begin{aligned}
\mathcal{S}^{\frac{\theta+1}{\theta-\alpha_{1}+p}} + O(\epsilon^{sp}) &= t_{\epsilon}^{p^{*}-p}\int_{0}^{1} r^{\theta} |u_{\epsilon}|^{p^{*}} \left(\ln{(\tau + |t_{\epsilon} u_{\epsilon}|)}\right)^{r^{\beta}} \, dr \\
           &= t_{\epsilon}^{p^{*}-p}\int_{0}^{1} r^{\theta} |u_{\epsilon}|^{p^{*}} \, dr + t_{\epsilon}^{p^{*}-p} \mathcal{E}_{t_{\epsilon}}(0,1) \\
           &= t_{\epsilon}^{p^{*}-p}\Big[\mathcal{S}^{\frac{\theta+1}{\theta-\alpha_{1}+p}} + O(\epsilon^{sp^{*}}) + \mathcal{E}_{t_{\epsilon}}(0,1)\Big],
           \end{aligned}
\end{equation}
where $\mathcal{E}_{t_{\epsilon}}(0,1)$ is given by \eqref{EtAB}. 
By the mountain-pass geometry
structure in Lemma~\ref{geometria PM}, we have $\delta_1\le t_{\epsilon}\le  T$ for some $\delta
_1 > 0$.  So, we can suppose that $t_{\epsilon}\to t_0>0$ as $\epsilon\to 0$ and,  from the condition $0<\beta< \min\{(\theta+1)/p, sp\}$, Lemma~\ref{lema20} and Lemma~\ref{lema20-B}  we can write 
\begin{equation}\label{5.9}
    \mathcal{E}_{t_{\epsilon}}(0,1) = \left\{
\begin{array}{lll}
  O(\epsilon^{\beta} \ln{(|\ln {\epsilon}|)}) + O(\epsilon^{\frac{\theta+1}{p}}), & \hbox{if} & 1 \leq \tau < e \\
O(\epsilon^{\beta} \ln{(|\ln {\epsilon}|)}), & \hbox{if} & e \leq \tau < \infty
\end{array}
\right. = O(\epsilon^{\beta} \ln{(|\ln {\epsilon}|)}).
\end{equation}
From \eqref{5.8} and \eqref{5.9},  we get
\begin{equation}\label{OtO}
    1 + O(\epsilon^{sp}) 
   =  t_{\epsilon}^{p^{*}-p} ( 1 + O(\epsilon^{\frac{\theta+1}{p-1}}) + O(\epsilon^{\beta} \ln{(|\ln {\epsilon}|)})).
\end{equation}
Letting $\epsilon\to 0$ we obtain $t_{\epsilon}\to 1$ as claimed in \eqref{5.7-}. Directly from \eqref{OtO} and using the condition $0<\beta< \min\{(\theta+1)/p, sp\}$ again, we get
\begin{equation*} 
t_{\epsilon}^{p^*-p}=\frac{ 1 + O(\epsilon^{sp}) }{1 + O(\epsilon^{\frac{\theta+1}{p-1}}) + O(\epsilon^{\beta} \ln{(|\ln {\epsilon}|)})}=1+O(\epsilon^{\beta} \ln{(|\ln {\epsilon}|)}).
\end{equation*}
Now, for any $q>0$ we can write $(1+x)^{q}=1+qx+O(x^2)$ as $x\to 0$. Hence, from the above identity we can write
\begin{equation}\label{tpp*O}
\left\{\begin{aligned}
&t_{\epsilon}=1+O(\epsilon^{\beta} \ln{(|\ln {\epsilon}|)})\\
&t^{p}_{\epsilon}=1+pT_{\epsilon}+O(\epsilon^{2\beta} \ln^2{(|\ln {\epsilon}|)})\\
&t^{p^*}_{\epsilon}=1+p^*T_{\epsilon}+O(\epsilon^{2\beta} \ln^2{(|\ln {\epsilon}|)}),\\
\end{aligned}\right.
\end{equation}
where $T_{\epsilon}=t_{\epsilon}-1$. Now, from \eqref{definition u}, \eqref{5.3}, \eqref{5.7-}  we have
\begin{equation}\label{IG<}
\begin{aligned}
    \Big|\int_{0}^{1} r^{\theta}G(r, t_{\epsilon}u_{\epsilon})\,  dr\Big| &\leq   C \epsilon^{sp^{*}-\frac{np^{*}}{m}}\int_{0}^{\epsilon} r^{\theta+ \beta} \,  dr + C\epsilon^{sp^{*}}\int_{\epsilon}^{1} r^{\theta+ \beta- \frac{np^{*}}{m}} \,  dr\\
    &+  C\epsilon^{sp-\frac{np}{m}}\int_{0}^{\epsilon} r^{\theta+ \beta} \,  dr + C\epsilon^{sp}\int_{\epsilon}^{1} r^{\theta+ \beta- \frac{np}{m}} \,  dr\\
    &\leq C \left(\epsilon^{\beta} + \epsilon^{\frac{\theta+1}{p-1}}\right)+C \left(\epsilon^{ \beta+\theta-\alpha_{1}+p} + \epsilon^{\frac{\alpha_{1}-p+1}{p-1}}\right)\le C_1 \epsilon^{\beta},
    \end{aligned}
\end{equation}
for $\epsilon
>0$ small enough.  For $\tau\ge 1$, it follows from Lemma~\ref{lema20} that there exists $C>0$ such that  (recall $\beta<(\theta+1)/p$)
\begin{equation}\label{A>T}
 \mathcal{E}_{t_{\epsilon}}(0,1)\ge C \epsilon^{\beta} \ln{|\ln{\epsilon}|}
\end{equation}provided that $\epsilon>0$ is small enough. At this point, we are in a position to estimate the level $c_{MP}$. From \eqref{5.6} is it enough to show that
$$  I(t_{\epsilon} u_{\epsilon}) < \Big(\frac{1}{p}-\frac{1}{p^*}\Big)\mathcal{S}^{\frac{\theta+1}{\theta-\alpha_1+p}},$$
for $\epsilon>0$ small enough. To do this,  we first write 
\begin{equation}\label{I<cpn-P1}
\begin{aligned}
   I(t_{\epsilon} u_{\epsilon}) &= \frac{t_{\epsilon}^{p}}{p} \int_{0}^{1} r^{\alpha_{1}} |u_{\epsilon}'|^{p} \, dr - \dfrac{t_{\epsilon}^{p^{*}}}{p^{*}}\int_{0}^{1}  r^{\theta}  |u_{\epsilon}|^{p^{*}} (\ln{(\tau + t_{\epsilon} |u_{\epsilon}|)})^{r^{\beta}}\, dr + \int_{0}^{1} r^{\theta} G(r, t_{\epsilon}u_{\epsilon})\,  dr \\
 &= \frac{t_{\epsilon}^{p}}{p} \|u_{\epsilon}\|^{p} - \dfrac{t_{\epsilon}^{p^{*}}}{p^{*}}\Big(\|u_{\epsilon}\|^{p^{*}}_{L^{p^{*}}_{\theta}} +\mathcal{E}_{t_{\epsilon}}(0,1)\Big)+ \int_{0}^{1} r^{\theta} G(r, t_{\epsilon}u_{\epsilon})\,  dr.
\end{aligned}
\end{equation}
Combining \eqref{3.6}, \eqref{3.7},  \eqref{5.9}, \eqref{tpp*O},  \eqref{I<cpn-P1}, \eqref{IG<} and \eqref{A>T} we obtain
\begin{equation}
\begin{aligned}
    I(t_{\epsilon} u_{\epsilon})  
&=\Big( \frac{1}{p}+T_{\epsilon}+O(\epsilon^{2\beta} \ln^2{(|\ln {\epsilon}|)})\Big) \left(\mathcal{S}^{\frac{\theta+1}{\theta-\alpha_{1}+p}} + O(\epsilon^{sp})\right) \\
&- \Big( \frac{1}{p^*}+T_{\epsilon}+O(\epsilon^{2\beta} \ln^2{(|\ln {\epsilon}|)})\Big)\left(\mathcal{S}^{\frac{\theta+1}{\theta-\alpha_{1}+p}} + \mathcal{E}_{t_{\epsilon}}(0,1)+O(\epsilon^{sp^{*}})\right) +O(\epsilon^{\beta})\\
&\leq \left(\frac{1}{p} -\frac{1}{p^{*}}\right)\mathcal{S}^{\frac{\theta+1}{\theta-\alpha_{1}+p}} + \epsilon^{\beta}\ln{|\ln{\epsilon}|} \Big[-\frac{C}{p^*}+
 O\Big(\frac{\epsilon^{sp}}{\epsilon^{\beta}\ln{|\ln{\epsilon}|}}\Big) 
+O\Big(\epsilon^{\beta}\ln{|\ln{\epsilon}|}\Big)\Big] \\
&+ \epsilon^{\beta}\ln{|\ln{\epsilon}|} \Big[ O\Big(\frac{\epsilon^{sp^{*}}}{\epsilon^{\beta}\ln{|\ln{\epsilon}|} }\Big)+O\Big(\frac{\epsilon^{\beta}}{\epsilon^{\beta}\ln{|\ln{\epsilon}|}}\Big)\Big] \\
&< \Big(\frac{1}{p}-\frac{1}{p^*}\Big)\mathcal{S}^{\frac{\theta+1}{\theta-\alpha_1+p}},
\end{aligned}
\end{equation}
for $\epsilon>0$ sufficiently small.
\end{proof}
\begin{lemma}\label{lema A}
    The level $\Big(\frac{1}{p}-\frac{1}{p^*}\Big)\mathcal{S}^{\frac{\theta+1}{\theta-\alpha_1+p}}$ is non-compactness level for the functional $I$.
\end{lemma}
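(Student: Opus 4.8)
Write $c^{*}:=\big(\tfrac1p-\tfrac1{p^{*}}\big)\mathcal S^{\frac{\theta+1}{\theta-\alpha_1+p}}$. The plan is to exhibit a Palais--Smale sequence for $I$ at the level $c^{*}$ that has no strongly convergent subsequence in $X^{1,p}_1(\alpha_0,\alpha_1)$; by definition this is precisely the statement that $c^{*}$ is a non-compactness level. As the test family I would use the concentrating bubbles already employed in Lemma~\ref{lema B}: for $\epsilon>0$ small let $u_{\epsilon}=\eta\,u^{*}_{\epsilon}$ be as in \eqref{definition u} with $\widehat A=1$, and let $t_{\epsilon}\in(0,T)$ be a maximizer of $t\mapsto I(tu_{\epsilon})$, so that $\langle I'(t_{\epsilon}u_{\epsilon}),u_{\epsilon}\rangle=0$ and, by \eqref{5.7-}, $t_{\epsilon}\to1$ as $\epsilon\to0$. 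Put $v_{\epsilon}:=t_{\epsilon}u_{\epsilon}$, fix any $\epsilon_{j}\downarrow0$, and set $v_{j}:=v_{\epsilon_{j}}$.

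The first (and routine) half is to check that $I(v_{j})\to c^{*}$ and that $(v_{j})$ converges weakly but not strongly. For the energy, the refined expansion already performed in the proof of Lemma~\ref{lema B} — combining \eqref{3.6}, \eqref{3.7} with \eqref{5.9}, \eqref{tpp*O} and \eqref{IG<} — gives $I(v_{\epsilon})=c^{*}+O\big(\epsilon^{\beta}\ln(|\ln\epsilon|)\big)=c^{*}+o(1)$. On the other hand $\|v_{\epsilon}\|^{p}=t_{\epsilon}^{p}\big(\mathcal S^{\frac{\theta+1}{\theta-\alpha_1+p}}+O(\epsilon^{sp})\big)$ is bounded and bounded away from $0$; since $X^{1,p}_1(\alpha_0,\alpha_1)$ is reflexive and $v_{j}(r)\to0$ for each fixed $r\in(0,1)$, every weak limit point of $(v_{j})$ equals $0$, so a strongly convergent subsequence would force $\|v_{j}\|\to0$, a contradiction. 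Hence $(v_{j})$ has no strongly convergent subsequence.

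The substantial half is to prove $I'(v_{j})\to0$ in $\big(X^{1,p}_1(\alpha_0,\alpha_1)\big)^{*}$. Fix $\varphi$ with $\|\varphi\|\le1$ and use \eqref{funcional derivada - I'(u)}. On $(r_{0},1)$ both $u^{*}_{\epsilon}$ and $(u^{*}_{\epsilon})'$ are $O(\epsilon^{s})$ (since $(\epsilon^{n}+r^{n})^{-1/m}\le r_{0}^{-n/m}$ there), so by Lemma~\ref{C3-L1} and H\"older the contribution of $(r_{0},1)$ is $O\big(\epsilon^{s\min\{p-1,\,p^{*}-1\}}\big)\|\varphi\|$. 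On $(0,r_{0})$ one has $v_{\epsilon}=t_{\epsilon}u^{*}_{\epsilon}$, and the extremal $u^{*}_{\epsilon}$ solves $-\big(r^{\alpha_1}|(u^{*}_{\epsilon})'|^{p-2}(u^{*}_{\epsilon})'\big)'=r^{\theta}(u^{*}_{\epsilon})^{p^{*}-1}$ on $(0,\infty)$ (Lagrange multiplier $1$ by \eqref{3.4}); integrating by parts — the boundary term at $0$ vanishes because $r^{\alpha_1}|(u^{*}_{\epsilon})'(r)|^{p-1}|\varphi(r)|\to0$ as $r\to0^{+}$ (use $(u^{*}_{\epsilon})'(r)=O(r^{n-1})$, Lemma~\ref{C3-L1} and $\theta>\alpha_1-p$), the one at $r_{0}$ is $O(\epsilon^{s(p-1)})\|\varphi\|$ — I am led to
\[
\langle I'(v_{\epsilon}),\varphi\rangle=(t_{\epsilon}^{p-1}-t_{\epsilon}^{p^{*}-1})\int_{0}^{r_{0}}r^{\theta}(u^{*}_{\epsilon})^{p^{*}-1}\varphi\,dr+t_{\epsilon}^{p^{*}-1}\int_{0}^{r_{0}}r^{\theta}(u^{*}_{\epsilon})^{p^{*}-1}\Big(1-\big(\ln(\tau+t_{\epsilon}u^{*}_{\epsilon})\big)^{r^{\beta}}\Big)\varphi\,dr+o(1)\|\varphi\|.
\]
Because $t_{\epsilon}\to1$ and $\int_{0}^{r_{0}}r^{\theta}(u^{*}_{\epsilon})^{p^{*}-1}|\varphi|\le\|u^{*}_{\epsilon}\|_{L^{p^{*}}_{\theta}}^{p^{*}-1}\|\varphi\|_{L^{p^{*}}_{\theta}}\le C$, the first term is $o(1)\|\varphi\|$; for the second, H\"older with exponents $p^{*},(p^{*})'$ (so that $(p^{*}-1)(p^{*})'=p^{*}$) bounds it by $C\|\varphi\|\big(\int_{0}^{r_{0}}r^{\theta}(u^{*}_{\epsilon})^{p^{*}}\big|1-(\ln(\tau+t_{\epsilon}u^{*}_{\epsilon}))^{r^{\beta}}\big|^{(p^{*})'}dr\big)^{1/(p^{*})'}$, and this last integral tends to $0$: as $\tau\ge1$ forces $\ln(\tau+t_{\epsilon}u^{*}_{\epsilon})>0$ on $(0,r_{0})$, splitting at $\epsilon^{1/p}$ one has on $(0,\epsilon^{1/p})$ that $r^{\beta}\le\epsilon^{\beta/p}$ and $c_{0}\le t_{\epsilon}u^{*}_{\epsilon}\le C\epsilon^{-(\alpha_1-p+1)/p}$ for a fixed $c_{0}>0$, hence $\big(\ln(\tau+t_{\epsilon}u^{*}_{\epsilon})\big)^{r^{\beta}}\to1$ uniformly, while on $(\epsilon^{1/p},r_{0})$ the bracket is bounded and $\int_{\epsilon^{1/p}}^{r_{0}}r^{\theta}(u^{*}_{\epsilon})^{p^{*}}\,dr=O(\epsilon^{(\theta+1)/p})$, both contributions being $o(1)$ by the same kind of estimate as in Lemmas~\ref{lema20}--\ref{lema20-B}. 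Thus $\|I'(v_{\epsilon})\|_{*}=o(1)$, so $(v_{j})$ is a Palais--Smale sequence for $I$ at $c^{*}$ without strongly convergent subsequence, i.e.\ $c^{*}$ is a non-compactness level for $I$.

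The main difficulty is exactly this last step — the \emph{uniform} bound $\|I'(v_{\epsilon})\|_{*}\to0$, and within it the control of the logarithmic perturbation $\int_{0}^{r_{0}}r^{\theta}(u^{*}_{\epsilon})^{p^{*}}\big|1-(\ln(\tau+t_{\epsilon}u^{*}_{\epsilon}))^{r^{\beta}}\big|^{(p^{*})'}dr$, where the hypothesis $\tau\ge1$ and a region decomposition tuned to the concentration scale $\epsilon^{1/p}$ are genuinely needed; by contrast, the energy limit $I(v_{j})\to c^{*}$ and the weak-but-not-strong convergence follow readily from the expansions set up for Lemma~\ref{lema B} and from reflexivity of $X^{1,p}_1(\alpha_0,\alpha_1)$.
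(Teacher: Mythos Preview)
Your proof is correct, but it takes a genuinely different and more demanding route than the paper. The paper works directly with the bubbles $u_{\epsilon}$ (no rescaling by $t_{\epsilon}$) and checks only three things: $(u_{\epsilon})$ concentrates at the origin, $u_{\epsilon}\rightharpoonup 0$ weakly, and $I(u_{\epsilon})\to c^{*}$ (via \eqref{3.6}, \eqref{3.7}, \eqref{5.9}, \eqref{IG<}). From concentration and nontrivial norm it concludes that $(u_{\epsilon})$ has no strongly convergent subsequence. Crucially, the paper never verifies that $I'(u_{\epsilon})\to 0$; its notion of ``loss of compactness'' is the informal one of a sequence at the level $c^{*}$ with no strongly convergent subsequence, and the lemma serves a motivational role (it is not invoked in Lemma~\ref{seq limitada}).

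You instead interpret ``non-compactness level'' in the standard Palais--Smale sense and produce a genuine PS sequence $v_{\epsilon}=t_{\epsilon}u_{\epsilon}$ at $c^{*}$. This requires the extra step $\|I'(v_{\epsilon})\|_{*}\to 0$, which you obtain by integrating by parts against the exact equation satisfied by $u^{*}_{\epsilon}$ on $(0,r_{0})$, controlling the boundary terms, and then handling the logarithmic correction via the $\epsilon^{1/p}$-splitting (where $\tau\ge 1$ is used to keep $\ln(\tau+\cdot)>0$). Your argument is sound: the uniform convergence $(\ln(\tau+t_{\epsilon}u^{*}_{\epsilon}))^{r^{\beta}}\to 1$ on $(0,\epsilon^{1/p})$ follows since $u^{*}_{\epsilon}\ge c_{0}>0$ there (indeed $u^{*}_{\epsilon}(\epsilon^{1/p})\to\widehat c$ by $sm/n=1/p$) so $\ln\ln(\cdot)$ stays between a fixed constant and $\ln|\ln\epsilon|+O(1)$, and on $(\epsilon^{1/p},r_{0})$ the bracket is bounded while the mass is $O(\epsilon^{(\theta+1)/p})$. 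What your approach buys is the stronger, standard statement that the PS condition fails at $c^{*}$; what the paper's approach buys is brevity, at the price of a weaker (but, for its purposes, sufficient) conclusion.
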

\begin{proof} 
Let $(u_{\epsilon})$ be defined as in \eqref{definition u} with $\widehat{A}=1$. We claim that $(u_{\epsilon})$ is a  concentrated sequence  at the origin $r=0$. Indeed, for $\rho<r_0$ and $r \in(0, \rho)$, we have $u_{\epsilon}(r)= \epsilon^{-(\alpha_{1}-p+1)/p} u_{1}^{*}(r/\epsilon)$  and,  from  \eqref{3.4} we have
\begin{equation*}
    \lim_{\epsilon\to 0}\int_{0}^{\rho} r^{\alpha_{1}} |u_{\epsilon}'(r)|^{p}\, dr 
    =  \lim_{\epsilon\to 0}\int_{0}^{\rho/\epsilon} s^{\alpha_{1}} |(u_{1}^{*})'(s)|^{p}\, ds = \int_{0}^{\infty} s^{\alpha_{1}} |(u_{1}^{*})'(s)|^{p}\, ds = \mathcal{S}^{\frac{\theta+1}{\theta-\alpha_{1}+p}}.
\end{equation*}
 Thus,  \eqref{3.6} ensures
\begin{eqnarray*}
   \int_{\rho}^{1} r^{\alpha_{1}} |u_{\epsilon}'(r)|^{p}\, dr &=&       \int_{0}^{1} r^{\alpha_{1}} |u_{\epsilon}'(r)|^{p}\, dr  -       \int_{0}^{\rho} r^{\alpha_{1}} |u_{\epsilon}'(r)|^{p}\, dr\\ 
      &=& \mathcal{S}^{\frac{\theta+1}{\theta-\alpha_{1}+p}} + O(\epsilon^{sp})- \mathcal{S}^{\frac{\theta+1}{\theta-\alpha_{1}+p}}\\
      &\to& 0
\end{eqnarray*}
as  $\epsilon \to 0$. If $\rho \in [r_{0}, 1)$, it follows that
$$    \int_{\rho}^{1} r^{\alpha_{1}} |u_{\epsilon}'(r)|^{p}\, dr  \leq     \int_{r_{0}/2}^{1} r^{\alpha_{1}} |u_{\epsilon}'(r)|^{p}\, dr  \to 0 \quad \text{if}\,\, \epsilon \to 0.$$
Hence, for any $0<\rho<1$ we have 
\begin{equation}\label{Diric-0}
\lim_{\epsilon\to 0}  \int_{\rho}^{1} r^{\alpha_{1}} |u_{\epsilon}'(r)|^{p}\, dr=0.
\end{equation}
In addition, we have 
\begin{eqnarray*}
    \int_{0}^{1} r^{\theta} |u_{\epsilon}|^{p}\, dr &\leq & \int_{0}^{1} r^{\theta} |u^{*}_{\epsilon}|^{p}\, dr \\
 &=& \epsilon^{  \theta -\alpha_{1}+p}\int_{0}^{\epsilon^{-1}} s^{\theta} |u^{*}_{1}(s)|^{p}\, ds \\
&=& \widehat{c}^{p} \epsilon^{  \theta -\alpha_{1}+p}\int_{0}^{\epsilon^{-1}} \dfrac{s^{\theta}}{(1+s^{n})^{\frac{1}{m}}}\, ds \to 0, \quad \text{as}\,\, \epsilon \to 0\\
\end{eqnarray*}
where $m$ and $n$ are gived in \eqref{relations-paramenters}.  In view of the compact embedding \eqref{eq10}, we have $u_{\epsilon}\rightharpoonup 0$  weakly in $X^{1,p}_{1}(\alpha_0,\alpha_1)$ as $\epsilon \to 0$. This together with \eqref{Diric-0} ensures that $(u_\epsilon)$ is a concentrated sequence at the origin $r=0$.  Now, by using \eqref{3.6}, \eqref{3.7}, \eqref{5.9} and \eqref{IG<}  we obtain
\begin{eqnarray*}
 I(u_{\epsilon}) 
& =& \Big(\frac{1}{p}-\frac{1}{p^{*}}\Big)\mathcal{S}^{\frac{\theta+1}{\theta-\alpha_{1}+p}} + O(\epsilon^{sp})- O(\epsilon^{sp^{*}})+ \dfrac{\mathcal{E}_{1}(0,1)}{p^{*}}+ \int_{0}^{1} r^{\theta} G(r, u_{\epsilon})\,  dr\\
&\to & \Big(\frac{1}{p}-\frac{1}{p^*}\Big)\mathcal{S}^{\frac{\theta+1}{\theta-\alpha_1+p}}
\end{eqnarray*}
as  $\epsilon \to 0$. Hence,  $(u_{\epsilon})$ is concentrating and converges weakly to $0$, and thus it does not contain
a strongly convergent subsequence in  $X^{1,p}_{1}(\alpha_0,\alpha_1)$.
\end{proof}

In view of Lemmas  \ref{geometria PM} and \ref{lema B} we are in a position to apply \cite[Teorema 2.2]{MR0709644}, to get a Palais-Smale sequence $(u_{j})$ in $ X^{1,p}_{1}(\alpha_0,\alpha_1)$ of $I$ at level $c_{MP} < \big(\frac{1}{p} -\frac{1}{p^{*}}\big)\mathcal{S}^{\frac{\theta+1}{\theta-\alpha_{1}+p}}$. That is, for any $\varphi \in X^{1, p}_{1}(\alpha_0,\alpha_1)$ holds
\begin{equation}\label{cond PS}
    I(u_{j}) \to c_{MP}\quad \text{and}\quad \langle I'(u_{j}), \varphi \rangle \to 0\;\;\mbox{as}\;\; j\to \infty.
\end{equation}
\begin{lemma}\label{seq limitada}
Up to a subsequence, $(u_{j})$ converges weakly to  $u_0\in X^{1,p}_{1}(\alpha_0,\alpha_1)$. In addition, $u_0$ is a nontrivial weak solution to     \eqref{C3-T4}.
\end{lemma}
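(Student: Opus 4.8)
The plan is to realize the solution of \eqref{m.5} as the weak limit of the Palais--Smale sequence $(u_j)$ supplied by \eqref{cond PS}, using the strict inequality of Lemma~\ref{lema B} to prevent this limit from being trivial. \textbf{Step 1 (boundedness).} First I would note that $g(r,s)$ has the sign of $s$, so $G(r,\cdot)\ge 0$; testing \eqref{funcional derivada - I'(u)} with $\varphi=u_j$ and using that $\|\cdot\|$ is equivalent to $\|\cdot\|_{L^p_{\alpha_1}}$ on $X^{1,p}_1(\alpha_0,\alpha_1)$,
\[
c_{MP}+1+o_j(1)\|u_j\|\ \ge\ I(u_j)-\tfrac{1}{p^*}\langle I'(u_j),u_j\rangle\ =\ \Big(\tfrac1p-\tfrac1{p^*}\Big)\!\int_0^1 r^{\alpha_1}|u_j'|^p\,dr+\int_0^1 r^\theta G(r,u_j)\,dr\ \ge\ c\,\|u_j\|^p,
\]
so, since $p>1$, $(u_j)$ is bounded. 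Up to a subsequence $u_j\rightharpoonup u_0$ in $X^{1,p}_1(\alpha_0,\alpha_1)$, and by the compact embedding \eqref{eq10} one has $u_j\to u_0$ in $L^q_\theta$ for all $1<q<p^*$ and $u_j\to u_0$ a.e.\ in $(0,1)$.

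\textbf{Step 2 ($u_0$ is a weak solution).} Next I would upgrade the convergence of the gradients. Since by Lemma~\ref{C3-L1} the bound $|u_j(r)|\le C\,r^{-(\alpha_1-p+1)/p}$ holds uniformly in $j$, the sequence is uniformly bounded on each $[\rho,1]$; testing $I'(u_j)$ with $(u_j-u_0)\psi$ for a cut-off $\psi$ supported in $[\rho,1]$, using the compact embedding \eqref{R0-compact} and the strict monotonicity of $t\mapsto|t|^{p-2}t$ (a standard Boccardo--Murat argument), gives $u_j'\to u_0'$ a.e.\ on $[\rho,1]$; letting $\rho\to0$ yields $u_j'\to u_0'$ a.e.\ on $(0,1)$, hence $|u_j'|^{p-2}u_j'\rightharpoonup|u_0'|^{p-2}u_0'$ weakly in $L^{p/(p-1)}_{\alpha_1}$. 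For the reaction term I would fix $\varphi\in X^{1,p}_1(\alpha_0,\alpha_1)$ and split $(0,1)=(0,\rho)\cup(\rho,1)$: on $(\rho,1)$ the integrand $r^\theta|u_j|^{p^*-2}u_j(\ln(\tau+|u_j|))^{r^\beta}\varphi$ is dominated (Lemma~\ref{C3-L1}) and converges a.e., while on $(0,\rho)$, using that $(\ln(\tau+|u_j|))^{r^\beta}$ is bounded near $0$ uniformly in $j$ (cf.\ \eqref{loglog}), Hölder's inequality gives
\[
\Big|\int_0^\rho r^\theta|u_j|^{p^*-2}u_j(\ln(\tau+|u_j|))^{r^\beta}\varphi\,dr\Big|\ \le\ C\Big(\int_0^\rho r^\theta|u_j|^{p^*}\,dr\Big)^{\frac{p^*-1}{p^*}}\Big(\int_0^\rho r^\theta|\varphi|^{p^*}\,dr\Big)^{\frac1{p^*}},
\]
whose first factor is bounded (by $\Sigma_p\sup_j\|u_j\|^{p^*}$) and whose second factor tends to $0$ as $\rho\to0$ because $\varphi\in L^{p^*}_\theta$; the same holds with $u_0$ in place of $u_j$. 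Passing to the limit in \eqref{funcional derivada - I'(u)} then gives $\langle I'(u_0),\varphi\rangle=0$ for all $\varphi$, i.e.\ $u_0$ is a weak solution of \eqref{m.5}.

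\textbf{Step 3 (nontriviality).} Writing $v_j=u_j-u_0\rightharpoonup0$ and $J(u)=\int_0^1 r^\theta|u|^{p^*}(\ln(\tau+|u|))^{r^\beta}\,dr$, I would combine the Brezis--Lieb splittings \eqref{4.2}--\eqref{4.3} with the fact that the logarithmic factor is negligible at the concentration point — which is the estimate \eqref{rho-closezero} together with $v_j\to0$ in $L^q_\theta$, giving $J(v_j)-\int_0^1 r^\theta|v_j|^{p^*}\,dr\to0$ and, similarly, $\int_0^1 r^\theta G(r,u_j)\,dr\to\int_0^1 r^\theta G(r,u_0)\,dr$. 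Using $\langle I'(u_j),u_j\rangle\to0$ (valid since $I'(u_j)\to0$ in the dual and $(u_j)$ is bounded) and the identity $\int_0^1 r^{\alpha_1}|u_0'|^p\,dr=J(u_0)$ (which is \eqref{m.5} tested against $u_0$), these splittings yield $\int_0^1 r^{\alpha_1}|v_j'|^p\,dr=\int_0^1 r^\theta|v_j|^{p^*}\,dr+o_j(1)$; combined with the Sobolev inequality $\int_0^1 r^{\alpha_1}|v_j'|^p\,dr\ge\mathcal{S}\big(\int_0^1 r^\theta|v_j|^{p^*}\,dr\big)^{p/p^*}$, the limit $\ell:=\lim_j\int_0^1 r^{\alpha_1}|v_j'|^p\,dr$ satisfies $\ell\ge\mathcal{S}\,\ell^{p/p^*}$, hence $\ell=0$ or $\ell\ge\mathcal{S}^{p^*/(p^*-p)}=\mathcal{S}^{\frac{\theta+1}{\theta-\alpha_1+p}}$. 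On the other hand the same splittings give $c_{MP}=\big(\tfrac1p-\tfrac1{p^*}\big)\ell+I(u_0)$ with $I(u_0)=\big(\tfrac1p-\tfrac1{p^*}\big)J(u_0)+\int_0^1 r^\theta G(r,u_0)\,dr\ge0$. If $u_0\equiv0$ then $I(u_0)=0$ and $c_{MP}=\big(\tfrac1p-\tfrac1{p^*}\big)\ell$; the case $\ell\ge\mathcal{S}^{\frac{\theta+1}{\theta-\alpha_1+p}}$ is excluded by Lemma~\ref{lema B}, and $\ell=0$ forces $c_{MP}=0$, again contradicting Lemma~\ref{lema B}. Hence $u_0\not\equiv0$, which finishes the proof.

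I expect the main obstacle to be twofold: carrying out the a.e.\ convergence of the gradients in a setting where the weight $r^{\alpha_1}$ degenerates at the origin, so that all compactness is available only on intervals $[\rho,1]$; and, more essentially, verifying that the supercritical logarithmic factor is asymptotically negligible at the concentration point, i.e.\ $J(v_j)-\int_0^1 r^\theta|v_j|^{p^*}\,dr\to0$. It is precisely this last point that makes the Brezis--Lieb decomposition and the Sobolev lower bound reproduce the clean dichotomy $\ell\in\{0\}\cup[\mathcal{S}^{(\theta+1)/(\theta-\alpha_1+p)},\infty)$ on which the use of the strict level estimate in Lemma~\ref{lema B} rests.
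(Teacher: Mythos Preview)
Your argument is correct and follows essentially the same Brezis--Nirenberg compactness scheme as the paper: boundedness via $I(u_j)-\tfrac{1}{p^*}\langle I'(u_j),u_j\rangle$, passage to a weak limit, and nontriviality from the strict level estimate together with the Sobolev dichotomy $\ell\in\{0\}\cup[\mathcal{S}^{(\theta+1)/(\theta-\alpha_1+p)},\infty)$. The only organizational difference is that the paper assumes $u_0\equiv0$ by contradiction at the outset and then shows that $(u_j)$ is a Palais--Smale sequence for the unperturbed functional $I_0(u)=\tfrac{1}{p}\|u\|^p-\tfrac{1}{p^*}\|u\|_{L^{p^*}_\theta}^{p^*}$ (i.e.\ it proves $I(u_j)=I_0(u_j)+o_j(1)$ and $\langle I'(u_j),\varphi\rangle=\langle I_0'(u_j),\varphi\rangle+o_j(1)$), whereas you keep $u_0$ general, run Brezis--Lieb on $v_j=u_j-u_0$, and obtain the energy splitting $c_{MP}=(\tfrac1p-\tfrac1{p^*})\ell+I(u_0)$ before specializing; both routes rest on the same key point that the logarithmic correction is negligible at the concentration scale (your use of \eqref{rho-closezero}, the paper's \eqref{U_1}--\eqref{U-G}). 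Your Step~2 also supplies the a.e.\ gradient convergence explicitly (via a Boccardo--Murat localization on $[\rho,1]$), which the paper simply delegates to a citation.
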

\begin{proof}
First,  from \eqref{cond PS}  we obtain 
\begin{eqnarray*}
c_{MP}+1 &\geq& I(u_{j}) - \frac{1}{p^{*}}\langle I'(u_{j}), u_{j} \rangle \\
&=&      \Big (\frac{1}{p}-\frac{1}{p^{*}}\Big)\|u_j\|^{p}  + \int_{0}^{1} r^{\theta} G(r, u_{j})\,  dr.
\end{eqnarray*}
It follows that $(u_{j})$ is a bounded sequence in  $X^{1,p}_{1}(\alpha_0,\alpha_1)$. Hence,  up to a subsequence, there exists  $u_{0}$ in $ X^{1,p}_{1}(\alpha_0,\alpha_1)$ such that
\begin{equation}\label{3-conv}
u_{j} \rightharpoonup u_{0}\;\;\mbox{weakly in}\;\; X^{1,p}_{1}(\alpha_0,\alpha_1),\;\; u_{j} \to u_{0} \;\;\mbox{in}\;\; L^{q}_{\theta}\; (p\leq q < p^{*}) \;\; \mbox{and}\;\; u_{j} \to u_{0}\;\;\mbox{a.e in }\;\; (0,1).
\end{equation}
By a standard argument, we can verify that  $u_0$ solves  the equation \eqref{C3-T4}, see for instance \cite{CCM02017,deFigueiredo-Goncalves-Miyagaki} for more details. Thus, it is sufficient to prove the following:
\begin{claim}\label{u0-nontrivial}
 $u_{0} \not \equiv 0.$
\end{claim}
 By contradiction, we suppose that $u_{0}\equiv 0$. As well as in \eqref{4.7-} we have
        \begin{equation}\label{5.12}
            \int_{\rho}^{1} r^{\theta} |u_{j}|^{p^{*}} (\ln{(\tau + |u_{j}|)})^{r^{\beta}}\, dr \to 0, 
        \end{equation}
        for any $\rho\in (0,1)$ fixed.
 In addition, the pointwise convergence in \eqref{3-conv}, Lemma~\ref{C3-L1} and the dominated convergence theorem implies
  \begin{equation}\label{5.12-part2}
            \int_{\rho}^{1} r^{\theta} |u_{j}|^{p^{*}}\, dr \to 0.
        \end{equation}
 Let $\Bar{\eta}:[0,1]\to[0,1]$ be a smooth function such that  $\Bar{\eta} \equiv 0$ in $[0, \rho/2]$ and $\Bar{\eta} \equiv 1$ in $[\rho, 1]$. By choosing $\varphi = \Bar{\eta} u_{j}$ in \eqref{cond PS}  we obtain
\begin{eqnarray*}
   \int_{\frac{\rho}{2}}^{1} r^{\alpha_{1}} |u^{\prime}_j|^{p-2}u'_{j}(\eta u_{j})' \, dr &=&    \int_{\frac{\rho}{2}}^{1} r^{\theta} |u'_{j}|^{p^{*}}\Bar{\eta} (\ln{(\tau+|u_{j}|)})^{r^{\beta}} \, dr + \langle o_{j}(1), \Bar{\eta} u_{j} \rangle \to 0,
\end{eqnarray*}
as $j \to \infty$, which implies that
\begin{eqnarray*}
   \int_{\rho}^{1} r^{\alpha_{1}} |u'_{j}|^{p} \, dr  \to 0, \quad \text{for}\,\, \rho \in (0,1).
\end{eqnarray*}
We claim that
\begin{equation}\label{5.13}
    I(u_{j}) = I_{0}(u_{j})+ o_{j}(1),
\end{equation}
where
\begin{equation*}
    I_{0}(u)= \frac{1}{p} \int_{0}^{1} r^{\alpha_{1}} |u'|^{p} \,dr - \frac{1}{p^{*}} \int_{0}^{1} r^{\theta} |u|^{p^{*}} \,dr. 
\end{equation*}
Indeed, note that
\begin{eqnarray*}
    I(u_{j}) - I_{0}(u_{j}) &=& -\frac{1}{p^{*}} \int_{0}^{1}  r^{\theta} |u_{j}|^{p^{*}} \left((\ln{(\tau + |u_{j}|)})^{r^{\beta}}-1\right)\, dr + \int_{0}^{1} r^{\theta} G(r, u_{j})  \,dr. 
\end{eqnarray*}
We will estimate each integral above for $j$ large enough.  By using \eqref{5.12} and \eqref{5.12-part2} we can see
\begin{equation}\label{UU}
\begin{aligned}
\Big| \int_{0}^{1}  r^{\theta} |u_{j}|^{p^{*}} \left((\ln{(\tau + |u_{j}|)})^{r^{\beta}}-1\right)\, dr\Big| &\le  \Big|\int_{0}^{\rho}  r^{\theta} |u_{j}|^{p^{*}} \left((\ln{(\tau + |u_{j}|)})^{r^{\beta}}-1\right)\, dr\Big|\\
&+\Big| \int_{\rho}^{1}  r^{\theta} |u_{j}|^{p^{*}} \left((\ln{(\tau + |u_{j}|)})^{r^{\beta}}-1\right)\, dr\Big|\\
&\le  \Big|\int_{0}^{\rho}  r^{\theta} |u_{j}|^{p^{*}} \left((\ln{(\tau + |u_{j}|)})^{r^{\beta}}-1\right)\, dr\Big|+o_{j}(1).
\end{aligned}
\end{equation}
For $\tau\ge e$,  arguing as in \eqref{neweq1}  we have 
\begin{equation}\label{U_1}
\begin{aligned}
\Big|\int_{0}^{\rho}  r^{\theta} |u_{j}|^{p^{*}} \left((\ln{(\tau + |u_{j}|)})^{r^{\beta}}-1\right)\, dr\Big|& = \int_{0}^{\rho}  r^{\theta} |u_{j}|^{p^{*}} \left((\ln{(\tau + |u_{j}|)})^{r^{\beta}}-1\right)\, dr \le 
o_{\rho}(1)
\end{aligned}
\end{equation}
where $o_{\rho}(1)\to 0$ as $\rho\to 0$ uniformly on $j$. On the other hand, if $1\le \tau <e$, by using the $u_0\equiv 0$ and \eqref{3-conv} and taking $j$ large enough. we can assume  $|u_j|\le e-\tau$ on $(0,\rho)$. Hence 
\begin{equation}\label{U-2}
\begin{aligned}
\Big|\int_{0}^{\rho}  r^{\theta} |u_{j}|^{p^{*}} \left((\ln{(\tau + |u_{j}|)})^{r^{\beta}}-1\right)\, dr\Big|&=\int_{0}^{\rho} r^{\theta} |u_{j}|^{p^{*}} \left(1-(\ln{(\tau + |u_{j}|)})^{r^{\beta}}\right)\, dr\\
& \le (e-\tau)^{p^{*}} \int_{0}^{\rho} r^{\theta} \, dr=o_{\rho}(1).
\end{aligned}
\end{equation}
 Now, from \eqref{5.3} and \eqref{3-conv}  we obtain
\begin{equation}\label{U-G}
\begin{aligned}
        \int_{0}^{1} r^{\theta} |G(r, u_{j})| \, dr &\leq  \epsilon \int_{0}^{1} r^{\theta}  |u_{j}|^{p^{*}} \, dr+ C_{\epsilon} \int_{0}^{1} r^{\theta} |u_{j}|^{p} \, dr\\
    &\leq \epsilon \Sigma_{p} + C_{\epsilon} o_j(1).
    \end{aligned}
\end{equation} 
By combining \eqref{UU}, \eqref{U_1}, \eqref{U-2} and \eqref{U-G}, we can write 
\begin{equation}\nonumber
 |I(u_{j}) - I_{0}(u_{j})| = o_{\rho}(1)+o_{j}(1)+o_{\epsilon}(1)
\end{equation}
which proves  \eqref{5.13}. Analogously, one shows that
$$\langle I'(u_{j}), \varphi \rangle = \langle I_{0}'(u_{j}), \varphi \rangle + \langle o_{j}(1), \varphi \rangle, \quad \forall \,\, \varphi \in X^{1,p}_{1}(\alpha_0,\alpha_1).$$
Then, using \eqref{cond PS} we obtain $(u_{j})$ is a Palais-Smale sequence also to the functional $I_{0}$ at level $c_{MP}$.  Namely, for any $\varphi \in X^{1, p}_{1}(\alpha_0,\alpha_1)$ we have
\begin{equation}\label{cond PS-I0}
    I_0(u_{j}) \to c_{MP}\quad \text{and}\quad \langle I'_0(u_{j}), \varphi \rangle \to 0\;\;\mbox{as}\;\; j\to \infty.
\end{equation}
In particular, 
\begin{equation}\label{WS1}
\|u_j\|^{p}=\|u_j\|^{p^*}_{L^{p^*}_{\theta}}+ \langle I^{\prime}_{0}(u_{j}), u_{j} \rangle
\end{equation}
and 
\begin{equation}\label{w1}
    I_{0}(u_{j}) - \frac{1}{p^{*}}\langle I^{\prime}_{0}(u_{j}), u_{j} \rangle = \Big(\frac{1}{p}-\frac{1}{p^{*}}\Big)\|u_{j}\|^{p}, \quad \mbox{for all}\;\; j\in\mathbb{N}.
\end{equation}
From \eqref{WS1}, we can suppose that  there exists $b\ge 0$ such that 
$$0\le b=\lim_{j\to\infty}\|u_{j}\|^{p}=\lim_{j\to\infty}\|u_j\|^{p^*}_{L^{p^*}_{\theta}}.$$
By \eqref{eq10}, we also have
$$\|u_{j}\|^{p}_{L^{p^{*}}_{\theta}} \leq \mathcal{S}^{-1}\|u_{j}\|^{p}$$
which implies 
\begin{equation}\label{b-Best}
b^{\frac{p}{p^*}}\leq \mathcal{S}^{-1}b.
\end{equation}
But, letting $j\to\infty$ in \eqref{w1}, we get 
\begin{equation}\label{w111}
    c_{MP} = \Big(\frac{1}{p}-\frac{1}{p^{*}}\Big)b.
\end{equation}
Combining the condition $c_{MP} < \big(\frac{1}{p} -\frac{1}{p^{*}}\big)\mathcal{S}^{\frac{\theta+1}{\theta-\alpha_{1}+p}}$ with \eqref{b-Best} and \eqref{w111}  we obtain $b=0$.  Hence, up to a subsequence, we have $u_{j} \to 0$ strongly in $X^{1,p}_{1}(\alpha_0,\alpha_1)$ and thus $I(u_{j}) \to 0$ which contradicts $I(u_{j}) \to c_{MP}>0$. 
\end{proof}

\end{document}